\algnewcommand\algorithmicreturn{\textbf{return}}
\algnewcommand\RETURN{\State \algorithmicreturn}%
\pgfplotsset{
  tick label style={font=\footnotesize},
  label style={font=\footnotesize},
  legend style={font=\footnotesize}
}
\newcommand{\bz}{z}
\newcommand{\rsf}{\mathsf{r}}
\newcommand{\ksf}{\mathsf{k}}
\newcommand{\ran}{\mathrm{ran}}
\newcommand{\ransf}{\mathrm{ran}}
\newcommand{\kersf}{\mathrm{ker}}
\newcommand{\intsf}{\mathrm{int}}
\newcommand{\srisf}{\mathrm{sri}}
\newcommand{\grasf}{\mathrm{gra}}
\newcommand{\Fixsf}{\mathrm{Fix}}
\newcommand{\bw}{w}
\newcommand{\R}{\mathbb{R}}
\newcommand{\N}{\mathbb{N}}
\newcommand{\cI}{\mathcal{I}}
\newcommand{\cS}{\mathcal{S}}
\newcommand{\cN}{\mathcal{N}}
\newcommand{\cT}{\mathcal{T}}
\newcommand{\cA}{\mathcal{A}}
\newcommand{\cU}{\mathcal{U}}
\newcommand{\cP}{\mathcal{P}}
\newcommand{\cH}{\mathcal{H}}
\newcommand{\cC}{\mathcal{C}}
\newcommand{\cY}{\mathcal{Y}}
\newcommand{\cQ}{\mathcal{Q}}
\newcommand{\cK}{\mathcal{K}}
\newcommand{\cB}{\mathcal{B}}
\newcommand{\cTtilde}{\tilde{\mathcal{T}}}
\newcommand{\zersf}{\mathrm{zer}}
\newcommand{\sri}{\mathrm{sri}}
\newcommand{\domsf}{\mathrm{dom}}
\newcommand{\be}{\begin{equation}}
\newcommand{\ee}{\end{equation}}
\newcommand{\weak}{\rightharpoonup}
\newcommand{\infimal}{\triangleright}
\newcommand{\mcup}{\mathbin{\raisebox{-0.20ex} {\scalebox{1.3}{$\cup$}}}} 
\newcommand{\mcap}{\mathbin{\raisebox{-0.05ex} {\scalebox{1.3}{$\cap$}}}}
\DeclareMathOperator{\Arg}{Arg}         
\newtheorem{assumption}{Assumption}
\newtheorem{example}{Example}
\newtheorem{remark}{Remark}
\newtheorem{fact}{Fact}
\newif\ifcompilePGFfigs
\title{On degenerate preconditioned proximal point methods under restricted monotonicity
}
\author{Feng Xue\thanks{National key laboratory, Beijing,  China (\tt{fxue@link.cuhk.edu.hk}).}
\and Hui Zhang\textsuperscript{\Envelope}\thanks{Corresponding author. Department of Mathematics, National University of Defense Technology, Changsha, Hunan 410073, China (\tt{h.zhang1984@163.com}).} } 
\date{\today}
\begin{document}

\maketitle

\begin{abstract}
This work investigates the fundamental properties of the degenerate preconditioned resolvent under restricted monotonicity. We extend key notions of non-expansiveness and demiclosedness to the degenerate case. By deriving an explicit characterization of the solution set of the resolvent, we establish several necessary and sufficient conditions for both the well-posedness of the degenerate resolvent and the weak convergence of the associated degenerate proximal point method---considered within either the range space of the preconditioner or the entire Hilbert space. These results provide new insights into the behavior of various operator splitting algorithms, particularly within the range space of the preconditioner. Numerical examples  extend the augmented Lagrangian method and Douglas--Rachford splitting algorithm to non-convex settings under restricted maximal monotonicity.
\end{abstract}

\begin{keywords}
Degenerate preconditioner, restricted maximal monotonicity, proximal point method, operator splitting algorithms.
\end{keywords}

\begin{AMS}
 47H05,  49M29, 49M27, 90C25
 \end{AMS}


\section{Introduction}  \label{sec_intro}
\subsection{Motivation} \label{sec_metric}
In nonlinear analysis and optimization, a generic and central task is to obtain numerical solutions to the inclusion problem
\be \label{p1}
\textrm{find } x\in \cH, \quad \textrm{s.t. }
0 \in \cA x,
\ee
where $\cH$ denotes a real Hilbert space and $\cA:\cH\mapsto 2^\cH$ is a set-valued maximally monotone operator. The classical proximal point method (PPM), dating back to 1970s \cite{martinet,rtr_1976}, becomes a standard solver of  \eqref{p1}, and its close connections to the augmented Lagrangian method (ALM), particularly from the dual viewpoint, have been thoroughly studied in \cite{rtr_1976_2,rtr_2023,marchi_thesis}. 

When the PPM is equipped with a  linear, positive definite preconditioner $\cQ$, it becomes:
\be \label{T}
x^{k+1} = \cT x^k, \quad \textrm{where\ } \cT :=(\cA+ \cQ)^{-1} \cQ.
\ee
Here, $\cQ$ is sometimes called metric \cite{fxue_rima} and correspondingly, $\cT$ is termed as {\it preconditioned/metric resolvent} \cite{fxue_rima,latafat_2024}. The preconditioned PPM \eqref{T} on its own is a conceptual framework, which was shown in \cite{hbs_siam_2012,hbs_siam_2012_2,hbs_jmiv_2017,bredies_2017,fxue_drs} to encompass a large variety of first-order operator splitting algorithms, such as alternating direction methods of multipliers (ADMM) \cite{boyd_admm} and Chambolle--Pock algorithm \cite{cp_2011}. The PPM \eqref{T} also serves as a basic prototype, upon which many extensions of PPM, such as asymmetric forward-backward-adjoint splitting \cite{latafat_2017,latafat_chapter}, preconditioned forward-backward splitting (FBS) \cite{ecc,fxue_coam} and TriPD \cite{latafat_ieee}, have been proposed to cover Condat--V\~{u}  \cite{condat_2013,vu_2013}, and more complex primal-dual algorithms \cite{arias_2011,bot_2014,bot_2019,ywt_2017,plc_vu}. This operator $\cT$ is a special case of {\it warped resolvent} or {\it $\cQ$-resolvent of $\cA$} \cite[Definition 1.1]{plc_warped}, which allows $\cQ$ to be non-linear. The preconditioned PPM is also closely related to the Bregman monotone proximal point algorithm \cite{hhb_resolvent,plc_bregman,hhb_bregman}, which replaces the standard norm by Bregman distance associated with a Legendre function, and $\cT$ is thus also called {\it $D$-resolvent} of $\cA$ in this context.

Very recently, the linear preconditioner $\cQ$ has been extended to be positive {\it semi}-definite, rather than strictly positive definite, and thus termed {\it degenerate} in \cite{bredies_2017,bredies_ppa,fxue_drs,latafat_2017}.  This degenerate PPM has been studied in a systematic way in infinite dimensional setting in  \cite{bredies_ppa}.
The degeneracy phenomenon prevails in many operator splitting algorithms. For instance, the celebrated Douglas--Rachford splitting (DRS), for solving $\min_{u\in\cU} f(u)+g(u)$, reads as
\be  \label{drs}
\left\lfloor \begin{array}{lll}
u^{k+1} & = &    \arg \min_u   g(u) +
\frac{1}{2\tau } \big\| u -  \bz^k  \big\|^2, \\
 \bw^{k+1} & = & \arg \min_\bw  f (\bw) + \frac{1}{2 \tau} \big\|  \bw -   (2u^{k+1} -  \bz^{k} ) \big\|^2 , \\
\bz^{k+1} & =  & \bz^k +(\bw^{k+1} - u^{k+1}).
 \end{array} \right.
\ee 
It exactly fits the form of \eqref{T}  according to
\be  \label{drs_ppa}
\cH = \cU^3,\ x^k =  \begin{bmatrix}
u^{k} \\ w^{k} \\ z^{k} \end{bmatrix},\ 
 \cA =   \begin{bmatrix}
\tau \partial g & I  & -I \\
-I  &\tau \partial f &  I \\
I   & -I  &  0
\end{bmatrix},\ 
\cQ =    \begin{bmatrix}
  0  &  0 &  0 \\   0 &  0    &  0 \\  0 &  0  &  I
\end{bmatrix},
\ee 
where the preconditioner $\cQ$ is clearly degenerate. Here, we adopt a different (but essentially equivalent) scheme \eqref{drs} from \cite[Eq.(1.6)]{bredies_ppa} for two reasons: (i) \eqref{drs} explicitly preserves both solution trajectories $\{(u^k, w^k)\}_{k\in\N}$ and the shadow sequence $\{z^k\}_{k\in\N}$; (ii) the degenerate preconditioner $\cQ$ in \eqref{drs_ppa} is easier to analyze than that  in  \cite[Eq.(1.5)]{bredies_ppa}. 

The degeneracy of $\cQ$ makes the iterative behaviours of \eqref{T} more subtle. A normal iterative process should require the operator $T$ to be well-defined.  Here the meaning of {\it well-definedness} is characterized as follows: 
\begin{itemize}
\item $\cT$ has to be of {\it full domain}, such that \eqref{T} admits arbitrary input $x^k\in \cH$ during the iterations;
\item $\cT$ has to be {\it single-valued}, such that $x^{k+1}$ can be uniquely determined by the current iterate $x^k$.
\end{itemize}

Unfortunately, the (global) maximal monotonicity of $\cA$ is not sufficient to guarantee the well-definedness of \eqref{T} in the degenerate case, as mentioned in \cite[Remark 2.2]{bredies_ppa}.
This motivates us to find the sufficient conditions on $\cA$ and $\cQ$ for the well-definedness of $\cT$ and the convergence of $\{x^k\}_{k\in\N}$. Equipped with our developed  results, we will be able to generalize the applications of DRS and ALM to non-convex settings.

\subsection{Related works}
\label{sec_related}
For the general warped resolvent, the well-definedness of $\cT$ can be ensured by $\ransf \cQ \subseteq \ransf(\cA+\cQ)$ and disjoint injectivity of $\cA+\cQ$ (see  \cite[Proposition 3.8]{plc_warped} and \cite[Proposition 2.1]{arias_infimal} for instance). For the degenerate case, the well-definedness of $\cT$ was simply put as a basic assumption of the so-called `{\it admissible preconditioner}' \cite[Definition 2.1]{bredies_ppa}, which was subsequently used in \cite{gautam} for studying warped Yosida regularization.  In a more recent work \cite{latafat_2024}, the authors also simply assumed $\cT$ to be of full domain (cf. \cite[Assumption 1]{latafat_2024}) and single-valued continuous (cf. \cite[Theorems 2.3 and 2.6]{latafat_2024}). 
\cite[Proposition 2.4]{naldi_thesis} further characterized the admissibility of $\cQ$ for $\cA$ in the same spirit as \cite{arias_infimal,plc_warped}, where the disjoint injectivity of $\cA+\cQ$ was refined to $\ransf\cQ$.  Besides, a number of sufficient conditions for well-definedness of $\cT$ are listed in \cite[Proposition 3.9]{plc_warped} such as uniform, strict or strong monotonicity. Though these conditions also apply for the degenerate case, they are too strong and hard to verify in practice. More importantly, they fail to associate the maximal monotonicity of $\cA$ with degeneracy of $\cQ$. The degenerate case is not particularly discussed in \cite{arias_infimal,plc_warped,hhb_resolvent,plc_bregman,hhb_bregman}. This motivates our focus to find mild conditions for the degenerate case.

The convergence issue of degenerate version of \eqref{T} has been first tackled in \cite{condat_2013,latafat_2017} for specific algorithms (corresponding to particular $\cA$ and $\cQ$) in finite dimensional setting. However, the continuity of the proximity operator---as a main assumption proposed therein (cf. \cite[Theorem 3.3]{condat_2013} and  \cite[Theorem 3.4]{latafat_2017})---is generally not satisfied in weak topology of infinite dimensional space. For infinite dimensional case, \cite{bredies_2017} discussed the weak convergence for a class of preconditioned ADMM, by reformulating it into a degenerate preconditioned PPM, and relying on the demiclosedness (i.e., weak-strong closedness) of $\cI -\cT$ under several additional assumptions (e.g., compactness and weak closedness in \cite[Lemma 3.4]{bredies_2017}). This analysis was based on the specific structure of $\cA$ and $\cQ$ (corresponding to ADMM), rather than a general treatment of \eqref{T}.  For the general degenerate setting of \eqref{T}, \cite{bredies_ppa} performed a systematic study, while extending the discussion of \cite{latafat_2017} to infinite dimensional settings.

Recently, this degenerate  PPM framework of \cite{bredies_ppa} has been  specialized to many following-up variants, e.g., PPM with adaptive stepsize \cite{lorenz_degenerate}, fast preconditioned ADMM algorithm \cite{sdf_fast}, graph-based algorithms \cite{bredies_lift,compoy_2024}, new extensions of Davis--Yin algorithm \cite{compoy_2024,naldi_2025}. This method was also extended to degenerate FBS \cite{fxue_coam,ecc,naldi_2025} and inexact hybrid proximal extragradient framework \cite{alves_2024}.

\subsection{Contributions and organization}
This work aims to provide a fundamental analysis and understanding of the degenerate preconditioned resolvent and its associated iterative scheme \eqref{T}. 

We first develop an explicit expression of $\cT x$ based on subspace decomposition in Lemma \ref{l_X}, from which naturally follows a notion of restricted monotonicity (cf. Lemma \ref{l_mon_eq} and Definition \ref{def_mon}). This concept connects the monotonicity of $\cA$ with the degeneracy of $\cQ$, and thus, is much milder than the commonly used (global) monotonicity. The operator $\cT$ enjoys several nice properties under restricted monotonicity, such as single-valuedness of $\cP_{\ransf\cQ} \circ \cT$ (Lemma \ref{l_single}) and $\cQ$-firm nonexpansiveness of $\cT$ (Fact \ref{f_fne}). In particular, 
 Lemma \ref{l_demi_T} establishes the $\cQ$-demiclosedness of $\cI-\cP_{\ransf \cQ} \circ \cT|_{\ransf\cQ}$, which obviates our need for a rather demanding assumption of demiclosedness of $\cI-\cT$ (see many additional conditions in \cite[Lemma 3.4]{bredies_2017}).
 
Regarding the full domain of $\cT$, Theorem \ref{t_full} presents a sufficient and necessary condition based on restricted Minty's theorem  (cf. Lemma \ref{l_minty}). This is a clear characterization of `{\it admissible preconditioner}' proposed in \cite{bredies_ppa} and adopted by other related works \cite{gautam,latafat_2017,latafat_2024}. Corollary \ref{c_full} further provides a sufficient condition for the admissibility.
Sect. \ref{sec_max} discusses the maximality of restricted monotonicity and its related issues. Finally, our results are verified by several counterexamples in Sect. \ref{sec_eg}.

For the single-valuedness of $\cT$, Theorem \ref{t_single} presents several equivalent conditions (shown in Proposition \ref{p_single}), which weakens the disjoint injectivity of $\cA+\cQ$ proposed in \cite{bau_review,plc_warped,arias_infimal} to that on $\ransf\cQ$ with respect to (w.r.t.) $\kersf \cQ$. This improvement relies to a large extent on the fact that $\cP_{\ransf\cQ} \circ \cT$ is already single-valued (cf. Lemma \ref{l_single}).

Theorem \ref{t_con_ran} establishes the weak convergence of $\cP_{\ransf\cQ} x^k \weak \cP_{\ransf\cQ} x^\star$ for some $x^\star\in \zersf\cA$, relying on the $\cQ$-demiclosedness of $\cI-\cP_{\ransf\cQ} \circ \cT|_{\ransf\cQ}$. This extends \cite[Theorem 3.4]{latafat_2017} from finite dimensional setting to general Hilbert spaces.  More importantly, Theorem \ref{t_con_ran} does not need to assume the single-valuedness (and hence well-definedness) of $\cT$ as \cite[Theorem 3.4]{latafat_2017}.  Notably, the operator $\cA$ is not assumed to be maximally monotone for this result. Many technical issues are discussed in Sect. \ref{sec_diss}.  We further show in Theorem  \ref{t_con} the whole convergence of $\{x^k\}_{k\in\N}$ under additional conditions. The comparisons with existing works are also discussed in details. 

Instead of our restriction of the PPM within $\ransf\cQ$,  \cite{bredies_ppa,naldi_thesis} directly perform the dimensionality reduction in parallel.  Sect. \ref{sec_reduced} develops several further results along this line, and establishes its connections to our approach. 

The numerical examples in Sect. \ref{sec_app} demonstrate the convergence of the reduced forms of ALM and DRS under restricted maximal monotonicity, even if the objective function is non-convex. This shows that our results broaden the application range of the first-order splitting algorithms.

\subsection{Notations, assumptions and preliminaries}
We use standard notations and concepts in convex analysis and variational analysis from \cite{rtr_book,rtr_book_2,plc_book,beck_book}. 
The adjoint of the linear operator $\cC$ is denoted by $\cC^\top$. The class of proper, lower semi-continuous  and convex functions is denoted by $\Gamma_0(\cH)$. The strong and weak convergences are denoted by $\rightarrow$ and $\rightharpoonup$, respectively. Let $\cT|_C$ denote the operator $\cT$ with its domain being restricted to a subset $C\subset\cH$, and let $\cP_C$ denote the orthogonal projection operator onto a subset $C\subset\cH$. The geometric objects, such as convex cone and (strong) relative interior, are described in  \cite[Sect. 6]{plc_book}. The $\cQ$-based semi-inner product and norm are defined as $\langle \cdot | \cdot \rangle_\cQ = \langle \cQ \cdot |\cdot \rangle$ and  $\|\cdot\|_\cQ = \sqrt{ \langle \cdot|\cdot \rangle_\cQ}$. We use the subscripts of $\rsf$ and $\ksf$ to denote the projections onto $\ransf\cQ$ and $\kersf\cQ$: $x_\rsf := \cP_{\ransf\cQ}  x$ and $x_\ksf := \cP_{\kersf\cQ}  x$. 
 $J_\cA$ denotes a resolvent of a (not necessarily maximally) monotone operator $\cA$. The parallel composition of $\cA$ by  linear operator $\cC$ is denoted by $\cC\infimal \cA = (\cC \circ \cA^{-1} \circ \cC^\top)^{-1}$ \cite[Definition 25.39]{plc_book}.

Throughout the paper, we make the following assumption on $\cQ$.

\begin{assumption} \label{assume_Q} {\rm (cf. \cite[Assumption 2.1]{fxue_drs})}

{\rm (i) [Basic]} $\cQ:\cH\mapsto \cH$ is linear, bounded, self-adjoint and positive semi-definite, and $\domsf\cQ=\cH$;

{\rm (ii) [Degeneracy]} $\cQ$ is degenerate, i.e., $\kersf\cQ \backslash \{0\} \ne \varnothing$;

{\rm (iii) [Closedness]} $\cQ$ has a closed range, i.e., $\overline{\ransf \cQ} = \ransf \cQ$.
\end{assumption}

\begin{remark} \label{rmk_closed}
 A simple criterion for the closedness is: $\exists \alpha>0$, such that $\|x\|_\cQ^2 \ge \alpha \|x\|^2,\ \forall x\in \ransf\cQ$ \cite[Fact 2.26]{plc_book}.

\end{remark}


A few basic properties of $\cQ$ are listed here.
\begin{fact} \label{f_Q}
Under Assumption \ref{assume_Q}, the following hold.

{\rm (i) \cite[Fact 2.25]{plc_book}} An orthogonal decomposition of $\cH$ is: $\cH = \ransf \cQ +\kersf \cQ$.

{\rm (ii)  \cite[Proposition 3.30]{plc_book}} Any point $x\in\cH$ can be decomposed as $x=x_\rsf+x_\ksf$, where $x_\rsf :=\cP_{\ransf\cQ} x$ and $x_\ksf :=\cP_{\kersf \cQ}x$ are the $\ransf \cQ$- and $\kersf \cQ$-parts of $x$.

{\rm (iii)  \cite[Theorem 4]{bernau}} $\cQ$ admits an (actually unique) self-adjoint square-root, denoted as $\sqrt{\cQ}$, such that $\cQ = \sqrt{\cQ}\sqrt{\cQ}$. Moreover, $\ransf\cQ = \ransf \sqrt{\cQ}$.

{\rm (iv) \cite[Proposition 3.30-(ii), (iii)]{plc_book}} $\cP_{\ransf \cQ} = \cQ \cQ^\dagger$, $\cP_{\kersf \cQ} = \cI-  \cQ \cQ^\dagger$, where $\cQ^\dagger$ denotes the (Moore--Penrose) pseudo-inverse of $\cQ$. 
\end{fact}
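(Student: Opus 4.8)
The plan is to dispatch the three items in turn, each reducing to a standard structural fact about bounded positive semi-definite self-adjoint operators, combined with the closed-range hypothesis of Assumption~\ref{assume_Q}(iii).

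For (i), I would first recall the general orthogonality $\ransf\cQ \subseteq (\kersf\cQ)^\perp$, which is immediate from self-adjointness: if $x\in\kersf\cQ$ and $y\in\cH$ then $\langle \cQ y \mid x\rangle = \langle y \mid \cQ x\rangle = 0$. Passing to closures and orthogonal complements upgrades this to the identity $\kersf\cQ = (\overline{\ransf\cQ})^\perp$. Assumption~\ref{assume_Q}(iii) then replaces $\overline{\ransf\cQ}$ by $\ransf\cQ$, so $\kersf\cQ = (\ransf\cQ)^\perp$ with $\ransf\cQ$ a closed subspace, and the orthogonal decomposition theorem gives $\cH = \ransf\cQ \oplus \kersf\cQ$.

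Item (ii) is then a direct consequence: applying the projection theorem onto the closed subspace $\ransf\cQ$ (respectively $\kersf\cQ$) produces for each $x\in\cH$ the splitting $x = x_\rsf + x_\ksf$ with $x_\rsf := \cP_{\ransf\cQ}x\in\ransf\cQ$, $x_\ksf := \cP_{\kersf\cQ}x\in\kersf\cQ$, and $\langle x_\rsf \mid x_\ksf\rangle = 0$; uniqueness of the two components is inherited from uniqueness of the orthogonal decomposition in (i).

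For (iii), I would invoke the continuous functional calculus for the bounded self-adjoint operator $\cQ$: since $\cQ$ is positive semi-definite its spectrum $\sigma(\cQ)$ lies in $[0,\|\cQ\|]$, and $t\mapsto\sqrt{t}$ is continuous there, so $\sqrt{\cQ}$ is a well-defined bounded, self-adjoint, positive semi-definite operator with $\sqrt{\cQ}\,\sqrt{\cQ}=\cQ$. Existence is routine; the hard part will be uniqueness among positive self-adjoint square roots. The standard route is to show that any such root $R$ commutes with $R^2=\cQ$, hence with every polynomial in $\cQ$ and, by Stone--Weierstrass approximation on $\sigma(\cQ)$, with $\sqrt{\cQ}$ itself; then, testing $(\sqrt{\cQ}-R)$ against $(\sqrt{\cQ}+R)$ and using that $\langle \sqrt{\cQ}\,u \mid u\rangle\ge 0$ and $\langle Ru\mid u\rangle\ge 0$ for every $u$, one forces $(\sqrt{\cQ}-R)^2=0$ and thus $\sqrt{\cQ}=R$. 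For later use I would also record $\kersf\sqrt{\cQ}=\kersf\cQ$ and $\overline{\ransf\sqrt{\cQ}}=\overline{\ransf\cQ}$, both of which follow from $\|\sqrt{\cQ}\,x\|^{2}=\langle\cQ x\mid x\rangle$ together with the functional calculus.
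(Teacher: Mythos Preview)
Your proposal is correct, and each of the three arguments is the standard textbook one. Note, however, that the paper does not actually prove Fact~\ref{f_Q}: it merely records the three items as known results with citations to \cite[Fact~2.25, Proposition~3.30]{plc_book} and \cite[Theorem~4]{bernau}, and uses them as black boxes thereafter. So there is no ``paper's own proof'' to compare against; your write-up simply supplies the details that the paper delegates to the references, and the route you take (closed-range $+$ self-adjointness for (i)--(ii), continuous functional calculus for (iii)) is exactly what those references contain.
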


\section{Basic properties under restricted monotonicity}
\label{sec_local}
To study the properties of $\cT$, let us now focus on the single step of \eqref{T} in this sequel, which is rewritten for simplicity as:
\be  \label{T_single}
y\in \cT x :=  (\cA+\cQ)^{-1} \cQ x. 
\ee
Here, $\cT$ is possibly a set-valued mapping, which maps an input $x\in\domsf\cT$ to a set $\cT x$. $y$ denotes a particular element in this set.

We first develop the solution set $\cT x$ of \eqref{T_single} in Lemma \ref{l_X}. It then gives rise to a restricted version of monotonicity (cf. Definition \ref{def_mon}), from which follow several nice properties such as firm nonexpansiveness and demiclosdedness in terms of $\cQ$-seminorm. These preliminary results  in Sect. \ref{sec_property} will be essential for Sect. \ref{sec_full}--\ref{sec_con} later.


\subsection{Characterization of the operator $\cT$}
An expression of solution set of  \eqref{T_single} is given below. This paves an easy and direct way for developing  later results.
\begin{lemma} \label{l_X}
Given \eqref{T_single} , the solution set $\cT x$ is given as
\[
\cT x = \bigcup_{y_\rsf\in Y_\rsf} \Big( y_\rsf + \big( (\cA^{-1} \cQ (x_\rsf - y_\rsf) - y_\rsf)   \mcap \kersf \cQ \big) \Big).
\]
Here, $Y_\rsf = (\cP_{\ransf \cQ} \circ \cT) x$ denotes the range component of $\cT x$, which is given as $Y_\rsf = (\sqrt{\cQ})^\dagger \big(\cI+  (\sqrt{\cQ} \cA^{-1} \sqrt{\cQ})^{-1} \big)^{-1}   (\sqrt{\cQ} x_\rsf) $. 
\end{lemma}
\begin{proof}
We develop:
\begin{eqnarray} \label{rr3}
&& y \in (\cA+\cQ)^{-1} \cQ x
\nonumber \\
& \Longleftrightarrow & 0\in \cA y +\cQ (y -x)
\nonumber \\
& \Longleftrightarrow & y \in \cA^{-1} \cQ (x - y)
\nonumber \\
& \Longleftrightarrow &
 \left\{ \begin{array}{l}
\sqrt{\cQ} y_\rsf \in \sqrt{\cQ} \cA^{-1} \sqrt{\cQ}  
(\sqrt{\cQ} x_\rsf - \sqrt{\cQ} y_\rsf)  \\ %
y_\ksf \in  \cA^{-1} \cQ (x_\rsf - y_\rsf)
- y_\rsf  \textrm{\ and\ } y_\ksf \in \kersf \cQ
\end{array} \right.
\nonumber \\
& \Longleftrightarrow &
 \left\{ \begin{array}{l}
(\sqrt{\cQ} \cA^{-1} \sqrt{\cQ})^{-1} (\sqrt{\cQ} y_\rsf )
\owns   \sqrt{\cQ} x_\rsf - \sqrt{\cQ} y_\rsf \\
y_\ksf \in  \big( \cA^{-1} \cQ (x_\rsf - y_\rsf)
- y_\rsf \big) \mcap \kersf \cQ
\end{array} \right.
\nonumber \\
& \Longleftrightarrow &
 \left\{ \begin{array}{l}
\sqrt{\cQ} y_\rsf \in \big(\cI+  (\sqrt{\cQ} \cA^{-1} \sqrt{\cQ})^{-1} \big)^{-1}   (\sqrt{\cQ} x_\rsf)  
:= J_{\sqrt{\cQ} \infimal \cA}    (\sqrt{\cQ} x_\rsf) \\
y_\ksf \in  \big( \cA^{-1} \cQ (x_\rsf - y_\rsf)
- y_\rsf \big) \mcap \kersf \cQ
\end{array} \right.
\end{eqnarray}
Combining both lines completes the proof.
\end{proof}

Recalling the first line of \eqref{rr3} and denoting $\cB := \sqrt{\cQ} \circ  \cA^{-1} \circ \sqrt{\cQ}$,  there is no need to additionally assume $\ransf ( \cI +  \cB^{-1} )^{-1} \mcap \ransf \cQ \ne\varnothing$ for existence of $\sqrt{\cQ} y_\rsf$. Actually, we can even obtain a stronger conclusion:  $\ransf ( \cI + \cB^{-1} )^{-1}  \subseteq \ransf \cQ$. To see this, let any $b\in  \ransf (\cI + \cB^{-1} )$, then we deduce that $a\in ( \cI + \cB^{-1} )^{-1}  b \Longleftrightarrow a+ \cB^{-1} a \owns b\Longleftrightarrow a\in \cB (b-a) \Longrightarrow a\in \ransf \cB \subseteq \ransf \cQ$. 

\subsection{Restricted monotonicity}
\label{sec_restricted}
Observing $Y_\rsf$ in Lemma \ref{l_X}, it motivates us to study the monotone property of $ (\sqrt{\cQ} \cA^{-1} \sqrt{\cQ})^{-1} $. First, this  operator can be simplified to several equivalent forms in terms of monotonicity.
\begin{lemma} \label{l_mon_eq}
The following five statements are equivalent.

{\rm (i)}  $ (\sqrt{\cQ} \cA^{-1} \sqrt{\cQ})^{-1} $ is monotone;

{\rm (ii)}  $\sqrt{\cQ} \cA^{-1} \sqrt{\cQ} $ is monotone;

{\rm (iii)} $\cP_{\ransf\cQ} \circ  \cA^{-1}|_{\ransf \cQ}$  is monotone;

{\rm (iv)} $\cA^{-1}|_{\ransf \cQ}$ is monotone; 

{\rm (v)} $\grasf \cA \mcap (\cH \times \ransf \cQ)$ is  monotone.
\end{lemma}
\begin{proof}
We show them following the order of (i) $\Longleftrightarrow$(ii)$\Longrightarrow$(iii)$\Longrightarrow$(iv)$\Longrightarrow$(v)$\Longrightarrow$(ii).

(i)$\Longleftrightarrow$(ii): \cite[Proposition 20.10]{plc_book}.

(ii)$\Longrightarrow$(iii): Given $(x,u) \in\grasf (\cP_{\ransf\cQ} \cA^{-1}|_{\ransf \cQ})$ and $(y,v) \in\grasf (\cP_{\ransf\cQ} \cA^{-1}|_{\ransf \cQ})$, then, $(x,y,u,v) \in \ransf \cQ =\ransf \sqrt{\cQ} = \ransf (\sqrt{\cQ})^\dagger$.
Letting $x=\sqrt{\cQ} x' $, $y=\sqrt{\cQ} y'$,  $u= (\sqrt{\cQ})^\dagger u'  $,  $v= (\sqrt{\cQ})^\dagger v' $ for some $u',v'\in\ransf\cQ$, we have $u = (\sqrt{\cQ})^\dagger u' \in \cP_{\ransf\cQ} \cA^{-1}|_{\ransf \cQ} x = \cP_{\ransf\cQ} \cA^{-1} \sqrt{\cQ} x'$ and  $v =(\sqrt{\cQ})^\dagger v' \in \cP_{\ransf\cQ} \cA^{-1}|_{\ransf \cQ} y = \cP_{\ransf\cQ} \allowbreak \cA^{-1} \sqrt{\cQ} y'$. This yields that $u' \in \sqrt{\cQ}  \cA^{-1} \sqrt{\cQ} x'$ and  $v' \in \sqrt{\cQ}  \cA^{-1} \sqrt{\cQ} y'$.  Consider
\begin{eqnarray}
\langle x-y| u-v  \rangle 
&= & \big\langle \sqrt{\cQ} x' - \sqrt{\cQ}  y' \big| 
(\sqrt{\cQ})^\dagger u' - (\sqrt{\cQ})^\dagger v'  \big\rangle 
\nonumber \\
&=&  \big\langle x' -  y' \big|  \cP_{\ransf \cQ} u' - 
 \cP_{\ransf \cQ} v'  \big\rangle \quad \textrm{by $\cP_{\ransf \cQ} = \sqrt{\cQ} (\sqrt{\cQ})^\dagger $}
 \nonumber \\
&=&  \big\langle  x' -  y' \big|  u' -  v'  \big\rangle 
\quad \textrm{by $(u',v') \in  \ransf \cQ$}
 \nonumber \\
& \ge & 0, \quad \textrm{by monotonicity of $ \sqrt{\cQ}  \cA^{-1} \sqrt{\cQ}$ assumed by (ii)}
\nonumber
\end{eqnarray}
which shows (iii).

(iii)$\Longrightarrow$(iv): Given $(x,u) \in\grasf (\cA^{-1}|_{\ransf \cQ})$ and $(y,v) \in\grasf (\cA^{-1}|_{\ransf \cQ})$, then, $(x,y) \in \ransf \cQ$. Consider
\begin{eqnarray}
\langle x-y| u-v  \rangle 
&= & \big\langle x -  y \big| 
\cP_{\ransf \cQ}  u -  \cP_{\ransf \cQ}  v  \big\rangle 
\quad \textrm{by $(x,y) \in  \ransf \cQ$}
\nonumber \\
&=&  \big\langle x -  y \big|u'-v'  \big\rangle \quad \textrm{letting $  u' = \cP_{\ransf \cQ} u$ and $v'= \cP_{\ransf \cQ} v $}
\nonumber \\
& \ge & 0, \quad \textrm{by $u'\in (\cP_{\ransf \cQ}\cA^{-1}|_{\ransf \cQ}) x$ and 
 $v' \in (\cP_{\ransf \cQ}\cA^{-1}|_{\ransf \cQ}) y$}
\nonumber
\end{eqnarray}
which shows (iv).

(iv)$\Longrightarrow$(v): Given $(x,u) \in\grasf \cA\mcap (\cH \times \ransf \cQ) $ and $(y,v) \in\grasf \cA\mcap (\cH \times \ransf \cQ) $, then, $u\in\cA x\mcap \ransf \cQ$ and $v\in\cA y \mcap \ransf \cQ$. This implies that $x\in \cA^{-1} u = (\cA^{-1}|_{\ransf \cQ} ) u$ and $y\in \cA^{-1} v = (\cA^{-1}|_{\ransf \cQ} ) v$. Thus, we have by (iv) that $\langle x-y| u-v  \rangle \ge 0$.

(v)$\Longrightarrow$(ii): Given $(x,u) \in\grasf (\sqrt{\cQ} \cA^{-1} \sqrt{\cQ}) $ and $(y,v) \in\grasf (\sqrt{\cQ} \cA^{-1} \sqrt{\cQ})$, 
then, $u\in\sqrt{\cQ} \cA^{-1} \sqrt{\cQ} x$ and $v\in \sqrt{\cQ} \cA^{-1} \sqrt{\cQ} y$.  This implies that $(\sqrt{\cQ})^\dagger u +s_\ksf\in \cA^{-1} \sqrt{\cQ} x$ and $(\sqrt{\cQ})^\dagger v +s'_\ksf\in \cA^{-1} \sqrt{\cQ} y$, for some $s_\ksf, s'_\ksf\in\kersf \cQ$. We further have $\sqrt{\cQ} x \in \cA ( (\sqrt{\cQ})^\dagger u +s_\ksf ) $ and $\sqrt{\cQ} y \in \cA ( (\sqrt{\cQ})^\dagger v +s'_\ksf ) $. Letting $x' = \sqrt{\cQ} x$, $y' = \sqrt{\cQ} y$,  $u' =  (\sqrt{\cQ})^\dagger u +s_\ksf $ and
$v' =  (\sqrt{\cQ})^\dagger v + s'_\ksf $, we have
$x'\in \cA u' \mcap \ransf \cQ$ and $y'\in \cA v' \mcap \ransf \cQ$.  Consider
\begin{eqnarray}
\langle x-y| u-v  \rangle 
&= & \big\langle  (\sqrt{\cQ})^\dagger x' +x_\ksf -  
(\sqrt{\cQ})^\dagger y'  -y_\ksf \big| 
\sqrt{\cQ}  u' -  \sqrt{\cQ}  v'  \big\rangle 
\nonumber \\
&=&  \big\langle \cP_{\ransf \cQ} x' -  
\cP_{\ransf \cQ} y' \big|u'-v'  \big\rangle
 \quad \textrm{by $x_\ksf, y_\ksf\in\kersf\cQ$}
\nonumber \\
&=&  \big\langle  x' - y' \big|u'-v'  \big\rangle
 \quad \textrm{by $x',y' \in\ransf\cQ$}
\nonumber \\
& \ge & 0, \quad \textrm{by monotonicity of $\grasf \cA \mcap (\cH \times \ransf \cQ)$}
\nonumber
\end{eqnarray} 
which shows (ii).  
\end{proof}

Lemma \ref{l_mon_eq} can be understood as an extension of \cite[Proposition 20.10]{plc_book} to the monotonicity restricted to a subspace. Observing Lemma \ref{l_mon_eq}-(v), the monotonicity of $\cA$ is associated with the degeneracy of $\cQ$, which gives rise to the following important definition. 
\begin{definition} [restricted monotonicity] \label{def_mon}
Assuming $\ransf\cA \mcap \ransf\cQ\ne \varnothing$, $\cA$ is monotone in $\cH \times \ransf\cQ$, or equivalently saying, $\grasf\cA \mcap (\cH \times \ransf \cQ)$ is monotone\footnote{Here, we define a set  (rather than an operator) to be monotone. This non-standard but brief terminology is for convenience of our further discussion. See \cite[Proposition 2.5]{bredies_ppa} for similar treatment.}, if
\[
\langle  x-y|u-v  \rangle \ge 0,\quad 
\forall (x,u), (y,v) \in \grasf \cA \mcap (\cH \times \ransf \cQ).
\]
\end{definition}

This notion was also mentioned in \cite[Proposition 2.5]{bredies_ppa}, and shown to be equivalent to the $\cQ$-monotonicity of $\cQ^{-1}\cA$ \cite[Definition 2.4]{bredies_ppa}.  Thus, we put the assumption of $\cA$ as
\begin{assumption} \label{assume_local}
{\rm (i) [Basic viability]} $\ransf \cA \mcap \ransf \cQ \ne \varnothing$;

{\rm (ii) [Restricted monotonicity]} $\grasf \cA \mcap (\cH \times \ransf\cQ)$ is monotone.
\end{assumption}
\begin{remark}
The condition of $\ransf \cA  \mcap \ransf\cQ  \ne \varnothing$ has been mentioned in Definition \ref{def_mon}, and this is a basic viability of all the operators listed in Lemma \ref{l_mon_eq}.
\end{remark}


\subsection{Basic properties} \label{sec_property}
Now, we show several basic properties on $\cT$ based on Assumptions \ref{assume_Q} and \ref{assume_local}.  The first  is the single-valuedness of $\cP_{\ransf\cQ} \circ \cT$. 
 
\begin{lemma} \label{l_single}
Under Assumptions \ref{assume_Q} and   \ref{assume_local}, $\cP_{\ransf\cQ} \circ \cT$ is single-valued.
\end{lemma}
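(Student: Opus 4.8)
The plan is to argue directly from the definition of $\cT$ in \eqref{T} together with the restricted monotonicity of $\grasf\cA\cap(\cH\times\ransf\cQ)$. Fix $x\in\cH$; if $\cT x=\varnothing$ there is nothing to prove, so suppose $y,z\in\cT x$. By \eqref{T}, $\cQ x\in(\cQ+\cA)y$ and $\cQ x\in(\cQ+\cA)z$, which rearrange to $\cQ(x-y)\in\cA y$ and $\cQ(x-z)\in\cA z$.

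The first key observation is that $\cQ(x-y)$ and $\cQ(x-z)$ automatically lie in $\ransf\cQ$, so the pairs $\bigl(y,\cQ(x-y)\bigr)$ and $\bigl(z,\cQ(x-z)\bigr)$ belong to $\grasf\cA\cap(\cH\times\ransf\cQ)$. Applying the restricted monotonicity inequality of Assumption \ref{assume_local}(ii) to these two pairs gives
\[
0\le\bigl\langle y-z\,\big|\,\cQ(x-y)-\cQ(x-z)\bigr\rangle=-\bigl\langle y-z\,\big|\,\cQ(y-z)\bigr\rangle=-\|y-z\|_\cQ^2 ,
\]
so that $\|y-z\|_\cQ=0$. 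Since $\|\cdot\|_\cQ$ is the seminorm induced by $\cQ$ (equivalently $\|w\|_\cQ^2=\|\sqrt{\cQ}\,w\|^2$, using the self-adjoint square root of Fact \ref{f_Q}(iii)), this forces $\cQ(y-z)=0$, i.e. $y-z\in\kersf\cQ$.

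Finally, the orthogonal decomposition $\cH=\ransf\cQ\oplus\kersf\cQ$ of Fact \ref{f_Q}(i) shows that $\ransf\cQ$ and $\kersf\cQ$ are mutually orthogonal, hence $\cP_{\ransf\cQ}(y-z)=0$, i.e. $\cP_{\ransf\cQ}y=\cP_{\ransf\cQ}z$. Therefore $\cP_{\ransf\cQ}\bigl(\cT x\bigr)$ contains at most one point, which is exactly the asserted single-valuedness. I do not expect a real obstacle in this argument: the only two points worth stating carefully are that the ``output residuals'' $\cQ(x-y)$ sit in $\ransf\cQ$ by construction---so that the restricted (rather than full) monotonicity is already enough to invoke---and that vanishing of the $\cQ$-seminorm yields membership in $\kersf\cQ$, which is orthogonal to $\ransf\cQ$. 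In particular, neither full domain nor single-valuedness of $\cT$ itself is needed for this partial statement, which is why it can serve as a preliminary result for the later sections.
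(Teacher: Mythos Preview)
Your argument is correct and essentially identical to the paper's own proof: both start from $\cQ(x-y)\in\cA y$ and $\cQ(x-z)\in\cA z$, apply the restricted monotonicity of $\grasf\cA\cap(\cH\times\ransf\cQ)$ to obtain $\|y-z\|_\cQ=0$, and conclude $\cP_{\ransf\cQ}y=\cP_{\ransf\cQ}z$. If anything, you are slightly more explicit than the paper in spelling out why the pairs lie in $\cH\times\ransf\cQ$ (so that the \emph{restricted} monotonicity suffices) and why $\|y-z\|_\cQ=0$ forces $y-z\in\kersf\cQ$.
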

\begin{proof}
Let $x \in \domsf \cT$ and suppose that $y_1 \in (\cA+\cQ)^{-1} \cQ x \subseteq \ransf\cT$ and  $y_2 \in (\cA+\cQ)^{-1} \cQ x \subseteq \ransf\cT$. Then, we have 
\be  \label{t4}
\cQ(x-y_1) \in \cA y_1, \quad  \cQ(x-y_2) \in \cA y_2.
\ee
 Thus, by Assumptions \ref{assume_Q} and   \ref{assume_local}, we derive that
\begin{eqnarray}
0 & \ge & -\|y_1-y_2\|_\cQ^2 
= \big\langle y_1-y_2 | \cQ (y_2-y_1) \big\rangle 
\quad  
\nonumber \\
&=& \big\langle y_1-y_2 | \cQ (x-y_1) - \cQ (x-y_2) \big\rangle \ge 0.
\nonumber
\end{eqnarray}
By  \eqref{t4} and monotonicity of $\grasf\cA\mcap (\ransf \cT \times \ransf \cQ)$, it  yields $\| y_1 - y_2 \|_\cQ = 0$, which implies $ y_1 -  y_2 \in\kersf \cQ$ and thus, $\cP_{\ransf\cQ} y_1=\cP_{\ransf\cQ} y_2$.
\end{proof}

Alternatively, Lemma \ref{l_single} can also be obtained by Lemma \ref{l_X}, by showing the uniqueness of $\sqrt{\cQ} y_\rsf$ in  \eqref{rr3}, based on the monotonicity of $\sqrt{\cQ} \infimal \cA$ (cf. \cite[Proposition 20.10]{plc_book}).

\cite[Proposition 4.2]{hhb_resolvent} showed that $\cT$ is $\cQ$-firmly nonexpansive under maximally monotone $\cA: \cH\mapsto 2^\cH$ and non-degenerate $\cQ$. This was extended to degenerate case in \cite[Lemma 3.2]{bredies_2017} and \cite[Sect. 2]{fxue_rima}.  \cite[Lemma 2.6]{bredies_ppa} further relaxed the condition to the  restricted monotonicity of $\grasf\cA \mcap (\cH \times \ransf\cQ)$. Here we show that this key property is also preserved under a milder restricted monotonicity of $\grasf\cA \mcap (\ransf\cT \times \ransf\cQ)$.
\begin{fact} \label{f_fne}
Under Assumptions \ref{assume_Q} and \ref{assume_local}, $\cT$ given as \eqref{T} is $\cQ$-firmly nonexpansive, i.e.,
\[
\big\|\cT x-\cT y\big\|_\cQ^2 \le \big\langle \cT x-\cT y \big| \cQ(x-y) \big\rangle, \quad \forall x,y \in\domsf \cT,
\]
or equivalently, 
\be  \label{fne}
\big\|\cT x-\cT y\big\|_\cQ^2+
\big\| (\cI-\cT) x- (\cI-\cT) y\big\|_\cQ^2 \le 
\big\|x-y\big\|_\cQ^2, \quad \forall x,y \in\domsf \cT.
\ee
\end{fact}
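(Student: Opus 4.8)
The plan is to mimic the classical argument for firm nonexpansiveness of resolvents, but carried out entirely in the $\cQ$-semi-inner product so that degeneracy causes no trouble. First I would fix $x,y\in\domsf\cT$ and set $p=\cT x$, $q=\cT y$. By definition of $\cT$ in \eqref{T} we have $\cQ x\in(\cQ+\cA)p$ and $\cQ y\in(\cQ+\cA)q$, hence $\cQ(x-p)\in\cA p$ and $\cQ(y-q)\in\cA q$. The crucial observation is that $\cQ(x-p)$ and $\cQ(y-q)$ both lie in $\ransf\cQ$, so the pairs $(p,\cQ(x-p))$ and $(q,\cQ(y-q))$ belong to $\grasf\cA\cap(\cH\times\ransf\cQ)$; restricted monotonicity (Assumption \ref{assume_local}(ii)) therefore applies and gives
\[
\big\langle p-q \,\big|\, \cQ(x-p)-\cQ(y-q)\big\rangle \ge 0,
\]
that is, $\big\langle p-q\,\big|\,\cQ(x-y)\big\rangle \ge \big\langle p-q\,\big|\,\cQ(p-q)\big\rangle = \|\cT x-\cT y\|_\cQ^2$. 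This is exactly the first (unparenthesized) form of the claimed inequality.

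Next I would derive the equivalent ``sum'' form \eqref{fne}. Writing $a=\cT x-\cT y$ and $b=(\cI-\cT)x-(\cI-\cT)y$, so that $a+b=x-y$, the inequality just obtained reads $\|a\|_\cQ^2\le\langle a\,|\,\cQ(a+b)\rangle=\|a\|_\cQ^2+\langle a\,|\,\cQ b\rangle$, i.e. $\langle a\,|\,\cQ b\rangle\ge 0$ (here I use self-adjointness of $\cQ$, Assumption \ref{assume_Q}(i), to move $\cQ$ freely inside the bracket). Then
\[
\|x-y\|_\cQ^2 = \|a+b\|_\cQ^2 = \|a\|_\cQ^2 + 2\langle a\,|\,\cQ b\rangle + \|b\|_\cQ^2 \ge \|a\|_\cQ^2 + \|b\|_\cQ^2,
\]
which is \eqref{fne}. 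Conversely, expanding \eqref{fne} the same way recovers $\langle a\,|\,\cQ b\rangle\ge0$ and hence the first form, so the two statements are genuinely equivalent; I would state this equivalence but only spell out one direction in detail.

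I do not expect a serious obstacle here: this is essentially \cite[Lemma 2.6]{bredies_ppa} restated, and the only subtlety — and the one point worth emphasizing — is checking that the ``residual'' subgradients $\cQ(x-p)$, $\cQ(y-q)$ actually land in $\ransf\cQ$, which is what makes Assumption \ref{assume_local}(ii) usable (it is immediate, since they are literally in the range of $\cQ$). One should also note in passing that $p,q$ are merely \emph{some} elements of the possibly set-valued $\cT x,\cT y$; the inequality holds for every such selection, and Lemma \ref{l_single} guarantees $\cQ p$, $\cQ q$ — hence every term appearing in $\|\cdot\|_\cQ$ — are in fact uniquely determined, so no ambiguity remains in the statement.
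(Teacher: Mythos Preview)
Your proposal is correct and is precisely the classical resolvent argument transplanted into the $\cQ$-semi-inner product. The paper does not supply its own proof of this fact---it cites \cite[Lemma 2.6]{bredies_ppa} directly---so there is no alternative argument to compare against; the approach in that reference is the same as yours. Your closing remark about multi-valuedness and the use of Lemma~\ref{l_single} to guarantee that every $\|\cdot\|_\cQ$-quantity is unambiguous is exactly the point the paper makes in the Remark immediately following Fact~\ref{f_fne}, so you have anticipated the one subtlety the authors flag.
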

\begin{remark}
There is still a slight difference between Fact \ref{f_fne} and \cite[Lemma 2.6]{bredies_ppa}, \cite[Lemma 3.2]{bredies_2017}. The latter two results required $\cT$ to be well-defined, which, however, is not assumed in our result. Nevertheless,   we clarify that all the expressions in above inequalities are well-defined,  even if $\cT$ is multi-valued. Indeed, $\|\cT x\|_\cQ=\|\cP_{\ransf\cQ}\cT x\|_\cQ$ and $\langle \cT x|\cQ y\rangle =\langle \cP_{\ransf\cQ}\cT x|\cQ y\rangle$, $\forall (x,y)\in \domsf\cT \times \cH$. They are well-defined, since  $\cP_{\ransf\cQ} \circ \cT$ is single-valued (by Lemma \ref{l_single}). To put it more explicitly, combining with $\cT = \cT\circ \cP_{\ransf \cQ}$, \eqref{fne} can be equivalently rewritten as $\forall x_\rsf, y_\rsf \in \cP_{\ransf\cQ} (\domsf \cT)$:
\begin{eqnarray}   \label{fne_r}
\big\|x_\rsf - y_\rsf \big\|_\cQ^2
& \ge & \big\| (\cP_{\ransf\cQ} \circ\cT)( x_\rsf)-
(\cP_{\ransf\cQ}\circ \cT) (y_\rsf) \big\|_\cQ^2
\nonumber \\
&+ &
\big\| (\cI-  \cP_{\ransf\cQ} \circ\cT)( x_\rsf)- (\cI- \cP_{\ransf\cQ} \circ\cT) (y_\rsf) \big\|_\cQ^2.
\end{eqnarray}
\end{remark}

The concept of demiclosedness \cite[Definition 4.26]{plc_book} and Browder's demiclosedness principle \cite[Theorem 4.27]{plc_book} are crucial for convergence analysis, see \cite[Lemma 3.4, Theorem 3.1]{bredies_2017} for example. 
 Here it is a natural way (or even trivial) to extend the results to any $\cQ$-firmly nonexpansive operator $\cS$.
\begin{definition} [$\cQ$-demiclosedness] \label{def_demi}
Under Assumption \ref{assume_Q}, let $D$ be a nonempty weakly sequentially closed subset of
$\ransf\cQ$, let $\cS : D \mapsto \ransf\cQ$, and let $u \in \ransf\cQ$. Then $\cS$ is $\cQ$-demiclosed at $u$ if, for every
sequence $\big\{x^k \big\}_{k\in\N}$ in $D$ and every $x \in D$ such that $x^k\weak x$ and $\cS x^k \rightarrow  u$, we have $\cS x = u$. In addition, $\cS$ is $\cQ$-demiclosed if it is demiclosed at every point
in $D$.
\end{definition}

\begin{lemma} [$\cQ$-demiclosedness principle] \label{l_demi}
Under Assumption \ref{assume_Q}, let $D$ be a nonempty weakly sequentially closed subset of $\ransf\cQ$ and let $\cS: D \mapsto \ransf\cQ$ be a $\cQ$-nonexpansive operator, i.e.,
\[
\big\|\cS x_1-\cS x_2 \big\|_\cQ \le 
\big\|x_1-x_2 \big\|_\cQ, \quad
\forall x_1,x_2\in D \subseteq\ransf\cQ \cap \domsf\cS.
\]
Then $\cI-\cS$ is $\cQ$-demiclosed.
\end{lemma}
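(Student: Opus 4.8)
The plan is to reduce the assertion to the classical Browder demiclosedness principle \cite[Theorem 4.27]{plc_book}, applied not on $\cH$ but on the subspace $\ransf\cQ$ endowed with the inner product $\langle\cdot|\cdot\rangle_\cQ$. The whole point is that Assumption \ref{assume_Q}(iii) turns $(\ransf\cQ,\langle\cdot|\cdot\rangle_\cQ)$ into a genuine Hilbert space whose weak and strong convergences coincide with the ones already used in Definition \ref{def_demi}, so that $\cQ$-demiclosedness of $\cI-\cS$ is literally ordinary demiclosedness of $\cI-\cS$ in that auxiliary space.

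First I would set up the auxiliary Hilbert space. By the Remark following Assumption \ref{assume_Q} (which uses $\overline{\ransf\cQ}=\ransf\cQ$) there is $\alpha>0$ with $\|x\|_\cQ^2\ge\alpha\|x\|^2$ for all $x\in\ransf\cQ$; combined with $\|x\|_\cQ^2=\langle\cQ x|x\rangle\le\|\cQ\|\,\|x\|^2$ this makes $\|\cdot\|_\cQ$ and $\|\cdot\|$ equivalent norms on the closed subspace $\ransf\cQ$. In particular $\langle\cdot|\cdot\rangle_\cQ$ is positive definite there, hence a true inner product, $(\ransf\cQ,\|\cdot\|_\cQ)$ is complete, and — equivalent norms on a Banach space have the same continuous dual and therefore the same weak topology — a sequence in $\ransf\cQ$ converges weakly in $(\ransf\cQ,\|\cdot\|_\cQ)$ iff it converges weakly in $\cH$, the weak limit remaining in $\ransf\cQ$ because $\ransf\cQ$ is a closed (hence weakly closed) subspace. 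Likewise strong $\|\cdot\|$- and $\|\cdot\|_\cQ$-convergence agree on $\ransf\cQ$. Consequently the data in Definition \ref{def_demi} applied to $\cI-\cS$ ($D\subseteq\ransf\cQ$ weakly sequentially closed, $\cS:D\to\ransf\cQ$ with $\cI-\cS:D\to\ransf\cQ$, $x^k\weak x\in D$ in $\cH$, $(\cI-\cS)x^k\to u\in\ransf\cQ$) transcribe verbatim into the corresponding hypotheses inside $(\ransf\cQ,\langle\cdot|\cdot\rangle_\cQ)$, where $\cS$ is nonexpansive by assumption.

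With this dictionary, the conclusion is exactly \cite[Theorem 4.27]{plc_book}. For self-containedness one may also just rerun the classical argument in the $\cQ$-norm: fix $u\in\ransf\cQ$ and take $x^k\in D$ with $x^k\weak x\in D$ and $v^k:=(\cI-\cS)x^k\to u$; expanding the nonexpansiveness inequality $\|\cS x^k-\cS x\|_\cQ^2\le\|x^k-x\|_\cQ^2$ about the vector $x^k-x$ gives
\[
\big\|x-v^k-\cS x\big\|_\cQ^2+2\big\langle x^k-x\,\big|\,x-v^k-\cS x\big\rangle_\cQ\le 0 ,
\]
and letting $k\to\infty$ (the cross term vanishes because $x^k-x\weak 0$ in $(\ransf\cQ,\|\cdot\|_\cQ)$ while $x-v^k-\cS x\to x-u-\cS x$ strongly) yields $\|x-u-\cS x\|_\cQ^2\le 0$; since $\|\cdot\|_\cQ$ is a genuine norm on $\ransf\cQ$ and $x-u-\cS x\in\ransf\cQ$, this forces $(\cI-\cS)x=u$. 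As $u\in\ransf\cQ$ was arbitrary, $\cI-\cS$ is $\cQ$-demiclosed.

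The only genuinely delicate point is the first step: making sure that "$x^k\weak x$" and "$\cS x^k\to u$" in Definition \ref{def_demi} really coincide with weak and strong convergence of $(\ransf\cQ,\langle\cdot|\cdot\rangle_\cQ)$. This is precisely where the closedness hypothesis Assumption \ref{assume_Q}(iii) is indispensable — without it $\|\cdot\|_\cQ$ need not be equivalent to $\|\cdot\|$ on $\ransf\cQ$, that space need not be complete, and the reduction fails. Everything after that is the textbook computation displayed above.
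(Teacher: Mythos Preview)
Your proposal is correct and follows exactly the route the paper indicates: the paper simply remarks that the proof is ``very similar to \cite[Theorem 4.27]{plc_book}, by replacing ordinary norm $\|\cdot\|$ by semi-norm $\|\cdot\|_\cQ$ and treating $\ransf\cQ$ as the ambient space,'' which is precisely your reduction to the classical Browder principle on the Hilbert space $(\ransf\cQ,\langle\cdot|\cdot\rangle_\cQ)$. Your added care in justifying, via Assumption~\ref{assume_Q}(iii), that $\|\cdot\|$ and $\|\cdot\|_\cQ$ are equivalent on $\ransf\cQ$ (so that the weak and strong convergences in Definition~\ref{def_demi} coincide with those of the auxiliary space) and your explicit expansion of the nonexpansiveness inequality make the argument self-contained, but the underlying idea is identical to the paper's.
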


The proof is very similar to  \cite[Theorem 4.27]{plc_book}, by replacing ordinary norm $\|\cdot\|$ by semi-norm $\|\cdot\|_\cQ$ and treating $\ransf\cQ$ as the ambient space.  The $\cQ$-demiclosedness essentially confines the demiclosed property within the subspace $\ransf\cQ$, which indicates that  the weak and strong convergences take place in $\ransf\cQ$ only.  This restriction makes sense: Lemma \ref{l_demi_T} will  claim $\cI-\cP_{\ransf\cQ}\circ\cT|_{\ransf\cQ}$ is $\cQ$-demiclosed, but $\cI-\cT$ is not necessarily so.

\begin{lemma} [$\cQ$-demiclosedness of $\cI-  \cP_{\ransf\cQ} \circ \cT|_{\ransf\cQ}$] \label{l_demi_T}
 Given $\cT$ as \eqref{T} satisfying Assumptions \ref{assume_Q} and   \ref{assume_local}, let $D$ be a nonempty closed subset of $\ransf\cQ \cap \domsf\cT$, then $\cI-  \cP_{\ransf\cQ} \circ \cT|_{\ransf\cQ}: D  \mapsto \ransf\cQ$ is $\cQ$-demiclosed.
\end{lemma}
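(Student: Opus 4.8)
The plan is to combine the $\cQ$-firm nonexpansiveness of $\cT$ (Fact \ref{f_fne}, in its restricted form \eqref{fne_r}) with the abstract $\cQ$-demiclosedness principle (Lemma \ref{l_demi}). The only gap to bridge is that Lemma \ref{l_demi} is stated for operators $\cS:D\mapsto\ransf\cQ$ that are genuinely single-valued and map into $\ransf\cQ$, whereas $\cT$ itself may be multivalued and need not take values in $\ransf\cQ$. The key observation is that $\cP_{\ransf\cQ}\circ\cT$ repairs both defects: by Lemma \ref{l_single} it is single-valued on $\domsf\cT$, and by construction it maps into $\ransf\cQ$.

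First I would set $\cS:=\cP_{\ransf\cQ}\circ\cT|_{\ransf\cQ}$, restricted to the given nonempty closed set $D\subseteq\ransf\cQ\cap\domsf\cT$. Since $D$ is a closed subset of the closed subspace $\ransf\cQ$ (recall $\overline{\ransf\cQ}=\ransf\cQ$ by Assumption \ref{assume_Q}(iii)), it is convex-closed, hence weakly sequentially closed, so the hypotheses of Lemma \ref{l_demi} on the domain are met. Next I would verify that $\cS$ is $\cQ$-nonexpansive on $D$: this is immediate from \eqref{fne_r} in the Remark following Fact \ref{f_fne}, which already expresses the $\cQ$-firm nonexpansiveness entirely in terms of $\cP_{\ransf\cQ}\circ\cT$ evaluated on points of $\cP_{\ransf\cQ}(\domsf\cT)$; dropping the nonnegative second term on the left-hand side gives $\|\cS x_1-\cS x_2\|_\cQ^2\le\|x_1-x_2\|_\cQ^2$ for all $x_1,x_2\in D$, noting that for $x\in D\subseteq\ransf\cQ$ one has $\cP_{\ransf\cQ}x=x$ so the arguments $x_\rsf,y_\rsf$ in \eqref{fne_r} are just the points of $D$ themselves.

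With these two facts in hand, Lemma \ref{l_demi} applies directly and yields that $\cI-\cS=\cI-\cP_{\ransf\cQ}\circ\cT|_{\ransf\cQ}$ is $\cQ$-demiclosed on $D$, which is exactly the claim. I do not anticipate a serious obstacle here; the statement is essentially a corollary, and the only point requiring a word of care is the bookkeeping that makes all expressions well-defined despite the possible multivaluedness of $\cT$ — namely that $\cS x$ depends only on $\cP_{\ransf\cQ}$ of any selection from $\cT x$, which is the content of Lemma \ref{l_single} and Fact \ref{f_T}(ii). One should also remark that $\cS$ is well-defined as a map $D\mapsto\ransf\cQ$ precisely because $D\subseteq\domsf\cT$, so $\cT x\ne\varnothing$ for $x\in D$; full domain of $\cT$ is not needed, only that we work inside $\domsf\cT$.
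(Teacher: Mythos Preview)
Your approach is essentially identical to the paper's: reduce to Lemma \ref{l_demi} by verifying that $\cP_{\ransf\cQ}\circ\cT|_{\ransf\cQ}$ is $\cQ$-nonexpansive via Fact \ref{f_fne} (indeed, the paper's proof is a one-liner invoking exactly these two results). One minor slip: you assert $D$ is ``convex-closed'' to deduce weak sequential closedness, but the hypothesis only gives (strong) closedness of $D$, and closed non-convex sets need not be weakly sequentially closed---though the paper's own proof does not address this point either.
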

\begin{proof}
In view of Lemma \ref{l_demi}, it suffices to show that  $ \cP_{\ransf\cQ} \circ \cT|_{\ransf\cQ}: D  \mapsto \ransf\cQ $ is $\cQ$-nonexpansive. Indeed, this immediately follows from  Fact \ref{f_fne}.
\end{proof}

Proposition \ref{p_equality} develops several useful results regarding \eqref{T_single} under the restricted monotonicity. Note that $\cT$ does not necessarily have full domain for the moment, and $\sqrt{\cQ} \infimal \cA$ may not be maximally monotone, and thus  its resolvent is not guaranteed to be well-defined. In this sense, Proposition \ref{p_equality}-(vi)  generalizes the classic Moreau's decomposition identity \cite[Proposition 23.20]{plc_book}.  

\begin{proposition} \label{p_equality}
Given \eqref{T_single} under Assumptions \ref{assume_Q} and  \ref{assume_local}, the following  hold.

{\rm (i)} $J_{\sqrt{\cQ} \infimal \cA} = \big(\cI+  (\sqrt{\cQ} \cA^{-1} \sqrt{\cQ})^{-1} \big)^{-1}$ is single-valued.

{\rm (ii)} $Y_\rsf = \{y_\rsf\} =  \big\{ (\sqrt{\cQ})^\dagger \big(\cI+  (\sqrt{\cQ} \cA^{-1} \sqrt{\cQ})^{-1} \big)^{-1}   (\sqrt{\cQ} x_\rsf) \big\}$.

{\rm (iii)} Given $x \in \domsf\cT$, the solution set $\cT x$ is given as 
\[
\cT x  = y_\rsf + \big( \cA^{-1} \cQ (x_\rsf - y_\rsf)
- y_\rsf\big) \mcap \kersf \cQ,
\]
where $y_\rsf $ is given in  (ii). 

{\rm (iv)} $\cT x\ne\varnothing$, if and only if $y_\rsf$ exists. 

{\rm (v)} $\domsf\cT = \domsf\cT +\kersf \cQ= \cP_{\ransf\cQ} (\domsf\cT) +\kersf \cQ$.

{\rm (vi)} $\cI = \big(\cI+  (\sqrt{\cQ} \cA^{-1} \sqrt{\cQ})^{-1} \big)^{-1} + \big(\cI+  \sqrt{\cQ} \cA^{-1} \sqrt{\cQ} \big)^{-1}  $.

{\rm (vii)} $\domsf \cT = (\sqrt{\cQ})^\dagger \ransf \big(\cI+   (\sqrt{\cQ} \cA^{-1} \sqrt{\cQ} )^{-1} \big)  +\kersf\cQ $. 

{\rm (viii)}  $\ransf \big(\cI+  (\sqrt{\cQ} \cA^{-1} \sqrt{\cQ})^{-1} \big)  = \ransf \big(\cI+  (\sqrt{\cQ} \cA^{-1} \sqrt{\cQ})^{-1} \big) + \kersf\cQ$.

{\rm (ix)} If $\ransf \big(\cI+  (\sqrt{\cQ} \cA^{-1} \sqrt{\cQ})^{-1} \big)  =\cH$, then $\ransf \big(\cI+  (\sqrt{\cQ} \cA^{-1} \sqrt{\cQ}|_{\ransf\cQ})^{-1} \big) 
\allowbreak  = \ransf\cQ$. 
\end{proposition}
\begin{proof}
(i)--(iii): in view of Lemmata \ref{l_X} and \ref{l_single}.

(iv) This statement is equivalent to $\big( \cA^{-1} \cQ (x_\rsf - y_\rsf) - y_\rsf \big) \mcap \kersf \cQ \ne\varnothing$, if $y_\rsf$ exists. Indeed, Combining with the proof of Lemma \ref{l_X}, $y_\rsf$ must satisfy $y_\rsf \in  \cP_{\ransf \cQ}  \cA^{-1} \cQ (x_\rsf-y_\rsf )$.
This implies that $\exists y_\ksf  \in \kersf \cQ$, such that $y_\rsf  +  y_\ksf  \in \cA^{-1} \cQ (x_\rsf - y_\rsf)$, which completes the proof.

(v) Noting that $\cT = \cT \circ \cP_{\ransf\cQ}$, we develop $\domsf\cT = \domsf (\cT \circ \cP_{\ransf\cQ} ) = 
\big\{ x\in \cH:  \cP_{\ransf\cQ} x \in \domsf\cT \big\} = \big\{ x+s\in \cH: \cP_{\ransf\cQ}  (x+s)\in  \domsf\cT, \forall s\in\kersf \cQ \big\}  = \domsf\cT +\kersf\cQ $.  The second equality is due to $\cP_{\ransf\cQ}  x = \cP_{\ransf\cQ} ( x+s) = \cP_{\ransf\cQ}  (x_\rsf+s)$, $\forall s\in \kersf \cQ$.

(vi) Denoting $\cB = \sqrt{\cQ} \circ \cA^{-1} \circ \sqrt{\cQ}$ (which is monotone, but not necessarily maximally monotone for the moment), we develop,  $\forall a\in \domsf  (\cI +\cB^{-1})^{-1}$, that $b = (\cI +\cB^{-1})^{-1} a \Longleftrightarrow a\in b + \cB^{-1} b \Longleftrightarrow b\in \cB(a-b)\Longleftrightarrow a-(a-b) \in \cB(a-b) \Longleftrightarrow a-b = (\cI+\cB)^{-1} a \Longleftrightarrow b = a - (\cI+\cB)^{-1} a = 
(\cI  - (\cI+\cB)^{-1} ) a$.

(vii) Combining (iv), (v) and \eqref{rr3}, we deduce that
\begin{eqnarray}
\domsf\cT &=& \{x\in\cH: \cT x \ne \varnothing\} 
\nonumber \\
&=& \{x_\rsf\in\ransf\cQ:  \cT x_\rsf \ne \varnothing\} +\kersf\cQ
\nonumber \\
&=& \big\{ x_\rsf\in\ransf\cQ:  
\sqrt{\cQ} x_\rsf \in \domsf \big(\cI+  (\sqrt{\cQ} \cA^{-1} \sqrt{\cQ})^{-1} \big)^{-1} \big\} +\kersf\cQ
\nonumber \\
&=&
 \big\{ x_\rsf\in\ransf\cQ:  
\sqrt{\cQ} x_\rsf \in \ransf \big(\cI+  (\sqrt{\cQ} \cA^{-1} \sqrt{\cQ})^{-1} \big) \big\} +\kersf\cQ 
\nonumber \\
&=& (\sqrt{\cQ})^\dagger \ransf \big(\cI+  (\sqrt{\cQ} \cA^{-1} \sqrt{\cQ})^{-1} \big)  +\kersf\cQ.
\nonumber 
\end{eqnarray}

(viii) It suffices to show that for any $t\in \ransf \big(\cI+  (\sqrt{\cQ} \cA^{-1} \sqrt{\cQ})^{-1} \big)$, we have $t+ \kersf \cQ \subseteq \ransf \big(\cI+  (\sqrt{\cQ} \cA^{-1} \sqrt{\cQ})^{-1} \big)$. Since $t\in  \ransf \big(\cI+  (\sqrt{\cQ} \cA^{-1} \sqrt{\cQ})^{-1} \big)$, $\exists q\in \ransf \cQ$, such that $t \in q + (\sqrt{\cQ} \cA^{-1} \sqrt{\cQ})^{-1}  q$. It yields that $(\sqrt{\cQ} \cA^{-1} \sqrt{\cQ}) (t-q) \owns q$, and thus,  $(\sqrt{\cQ} \cA^{-1} \sqrt{\cQ}) (t+s-q) \owns q$, $\forall s\in \kersf Q$, since $\sqrt{\cQ} s = 0$. This implies that $t+s\in  q + (\sqrt{\cQ} \cA^{-1} \sqrt{\cQ})^{-1}  q \subseteq \ransf \big(\cI+  (\sqrt{\cQ} \cA^{-1} \sqrt{\cQ})^{-1} \big)$, $\forall s\in \kersf \cQ$.  

(ix)  Denote $\cB = \sqrt{\cQ} \circ \cA^{-1} \circ  \sqrt{\cQ} $ for brevity. If  $\ransf \big(\cI+  \cB^{-1} \big)  =\cH (\supseteq \ransf\cQ)$, this  implies that $\forall a \in \ransf\cQ$,   $\exists b \in \domsf \cB^{-1} \subseteq \ransf \cQ$, such that $(\cI+\cB^{-1})b = b+\cB^{-1} b \owns a$. This further indicates that $\cB^{-1} b\subseteq\ransf\cQ$ for such $b$.  Denoting $c\in \cB^{-1}b$ for such $b$, then $c\in\ransf\cQ$, and $\cB c\owns b$. This can be rewritten as $(\cB|_{\ransf\cQ}) (c)\owns b$, and further $c\in (\cB|_{\ransf\cQ})^{-1} b$. Thus, we obtain that $\cB^{-1} b =  (\cB|_{\ransf\cQ})^{-1} b$ for such $b$. This shows that $\forall a \in \ransf\cQ$, $\exists b\in \domsf  (\cB|_{\ransf\cQ})^{-1}\subseteq \ransf\cQ$,  such that $(\cI+ (\cB|_{\ransf\cQ})^{-1}) b  \owns a$,  i.e.,  $\ransf \big(\cI+   (\cB|_{\ransf\cQ})^{-1} \big)   \supseteq \ransf\cQ$. On the other hand, it is clear that  $\ransf \big(\cI+   (\cB|_{\ransf\cQ})^{-1} \big)   \subseteq \ransf\cQ$, since $\domsf (\cB|_{\ransf\cQ})^{-1}  \subseteq\ransf\cQ$ and  $\ransf (\cB|_{\ransf\cQ})^{-1}  \subseteq\ransf\cQ$.  
\end{proof}

\section{Full domain of degenerate preconditioned resolvent}  \label{sec_full}
We first present a sufficient and necessary condition on $\cA$ and $\cQ$ for the full domain of $\cT$, which is further strengthened to a stronger condition with global maximal monotonicity of $\cA$.

\subsection{Main result}
Proposition \ref{p_equality}-(vi) inspires us to study maximality of the monotonicity of $ (\sqrt{\cQ} \cA^{-1} \sqrt{\cQ})|_{\ransf\cQ} $, which can be equivalently simplified as that of $\cP_{\ransf\cQ}\cA^{-1}|_{\ransf\cQ}:\ransf\cQ \mapsto 2^{\ransf\cQ}$, as shown below. 
\begin{lemma} \label{l_max_mon}
The maximal monotonicity of $\cP_{\ransf\cQ} \circ \cA^{-1}|_{\ransf\cQ}:\ransf\cQ \mapsto 2^{\ransf\cQ}$ is equivalent to that of $ (\sqrt{\cQ} \cA^{-1} \sqrt{\cQ})|_{\ransf\cQ}:\ransf\cQ \mapsto 2^{\ransf\cQ}$.
\end{lemma}
\begin{proof}
First, if $\cP_{\ransf\cQ} \circ \cA^{-1}|_{\ransf\cQ}$ is maximally monotone, then for any given $(u,x)\in \ransf\cQ \times \ransf\cQ$, we have: 
\be \label{r5}
(u,x)\in \grasf (\cP_{\ransf\cQ} \cA^{-1}|_{\ransf\cQ}) \Longleftrightarrow 
\forall (v,y)\in \grasf (\cP_{\ransf\cQ} \cA^{-1}|_{\ransf\cQ}), \ 
\langle u-v|x-y \rangle \ge 0.
\ee
Let $u=\sqrt{\cQ}u'$ and $v=\sqrt{\cQ}v'$ for $u',v'\in\ransf\cQ$. Then, $x\in \cP_{\ransf\cQ}\cA^{-1} \sqrt{\cQ} u'$ and $y\in \cP_{\ransf\cQ}\cA^{-1} \sqrt{\cQ} v'$. Further, let $x'=\sqrt{\cQ} x$ and $y'=\sqrt{\cQ}y$, then \eqref{r5} becomes
\[
(u',x')\in \grasf \Big( (\sqrt{\cQ} \cA^{-1}\sqrt{\cQ} ) \big|_{\ransf\cQ} \Big) \Longleftrightarrow 
\forall (v',y')\in \grasf \Big( (\sqrt{\cQ} \cA^{-1}\sqrt{\cQ}) \big|_{\ransf\cQ} \Big), \ 
\langle u'-v' | x'-y' \rangle \ge 0.
\]
This shows that  $(\sqrt{\cQ}\cA^{-1}\sqrt{\cQ} ) \big|_{\ransf\cQ}$ is maximally monotone.  The converse statement can be shown in a similar manner.
\end{proof}

The maximality of $\cP_{\ransf\cQ} \circ \cA^{-1}|_{\ransf\cQ}$ or  $ (\sqrt{\cQ} \cA^{-1} \sqrt{\cQ})|_{\ransf\cQ} $  is essentially a restricted version of maximal monotonicity from $\cH$ to $2^\cH$, by confining both domain and range of the operator to $\ransf\cQ$. Indeed, this {\it restricted maximal monotonicity} is closely related to Definition \ref{def_mon}  due to Lemma \ref{l_mon_eq}. More precisely, Lemma \ref{l_minty} formally develops a restricted version of classic Minty's theorem \cite[Theorem 21.1]{plc_book} for the restricted maximality of  $\cP_{\ransf\cQ}  \circ \cA^{-1}|_{\ransf \cQ}$.
\begin{lemma}  [Restricted Minty's theorem] \label{l_minty}
Let  $\cP_{\ransf\cQ}  \circ \cA^{-1}|_{\ransf \cQ}:\ransf\cQ \mapsto 2^{\ransf\cQ}$ be monotone. Then  $\cP_{\ransf\cQ}  \circ \cA^{-1}|_{\ransf \cQ}:\ransf\cQ \mapsto 2^{\ransf\cQ}$  is maximally monotone, if and only if $\ransf (\cI+ \cP_{\ransf\cQ} \circ  \cA^{-1}|_{\ransf \cQ}) =\ransf\cQ$.
\end{lemma}

The proof is exactly the same as \cite[Theorem 21.1]{plc_book}, if the subspace $\ransf\cQ$ is regarded as the ambient space. Actually, our  specific $\cP_{\ransf\cQ} \circ \cA^{-1}|_{\ransf\cQ}$ in Lemma \ref{l_minty} can be replaced by any monotone operator from $\ransf\cQ$ to $2^{\ransf\cQ}$. Lemma \ref{l_minty} is then rephrased as the following assumption.
\begin{assumption} \label{assume_max}
 $\cP_{\ransf\cQ} \circ  \cA^{-1}|_{\ransf\cQ}: \ransf\cQ \mapsto 2^{\ransf\cQ}$ is maximally monotone.
\end{assumption}

Notice that Assumption \ref{assume_local}-(ii) has been fully implied by Assumption \ref{assume_max}. 
Based on Assumption \ref{assume_max}, We are ready to present the main result for the full domain of $\cT$.
\begin{theorem} \label{t_full}
Under Assumptions \ref{assume_Q} and \ref{assume_local}, $\cT$ given as \eqref{T} has full domain (i.e., $\domsf \cT =\cH$), if and only if Assumption \ref{assume_max} is satisfied.
\end{theorem}
\begin{proof}
We develop
\begin{eqnarray}
&& \domsf \cT = \cH  = \overline{\ransf}\cQ +\kersf\cQ = \ransf\cQ +\kersf\cQ 
\textrm{\ (by closedness of $\ransf\cQ$)}
\nonumber \\
& \Longleftrightarrow & (\sqrt{\cQ})^\dagger \ransf \big(\cI+   \sqrt{\cQ} \cA^{-1} \sqrt{\cQ}  \big)  +\kersf\cQ  = \ransf\cQ +\kersf\cQ 
\quad \textrm{by Proposition \ref{p_equality}-(viii)} 
\nonumber \\
& \Longleftrightarrow & (\sqrt{\cQ})^\dagger \ransf \big(\cI+   \sqrt{\cQ} \cA^{-1} \sqrt{\cQ}  \big)  = \ransf\cQ  
\quad \textrm{by $\ransf (\sqrt{\cQ})^\dagger = \ransf\cQ$} 
\nonumber \\
& \Longleftrightarrow &    \ransf \big(\cI+  ( \sqrt{\cQ} \cA^{-1} \sqrt{\cQ} )^{-1} \big)  = \ransf\cQ +\kersf \cQ = \cH  
\quad \textrm{by Proposition \ref{p_equality}-(viii)} 
\nonumber \\
& \Longleftrightarrow & \ransf \big(\cI+   \sqrt{\cQ} \cA^{-1} \sqrt{\cQ} |_{\ransf \cQ} \big)  = \ransf\cQ  
\quad \textrm{by Proposition \ref{p_equality}-(ix)} 
\nonumber \\
& \Longleftrightarrow & 
\cP_{\ransf\cQ}\cA^{-1}|_{\ransf\cQ}:\ransf\cQ \mapsto 2^{\ransf\cQ} \textrm{\ is maximally monotone}
\quad \textrm{by Lemmas \ref{l_max_mon} and \ref{l_minty}} 
\nonumber 
\end{eqnarray}
\end{proof}

Also note that the non-degenerate case can be easily recovered from Theorem \ref{t_full}.  This is also an extension of Minty's Theorem  \cite[Theorem 21.1]{plc_book}.
\begin{corollary}
Given a monotone operator $\cA:\cH\mapsto 2^\cH$ and non-degenerate $\cQ$, then $\cT$ has full domain, if and only if $\cA:\cH\mapsto 2^\cH$ is maximally monotone. 
\end{corollary}
\begin{proof}
In this case, $\ransf\cQ = \cH$,  $\cP_{\ransf\cQ}=\cI$ and $\cP_{\ransf\cQ} \circ \cA^{-1}|_{\ransf\cQ} = \cA^{-1}$. Then the proof is completed by Theorem \ref{t_full} and \cite[Proposition 20.22]{plc_book}.
\end{proof}

\subsection{Sufficient condition}
The next question regarding Theorem \ref{t_full} naturally arises: under what conditions on $\cA$ and $\cQ$,  $\sqrt{\cQ}\cA^{-1} \sqrt{\cQ}|_{\ransf\cQ}:\ransf\cQ \mapsto 2^{\ransf\cQ}$ is maximally monotone?  To answer this question, we propose the following assumption and derive a sufficient condition in the subsequent corollary.
\begin{assumption} \label{assume_A}
{\rm (i)} $\cA:\cH\mapsto 2^\cH$ is maximally monotone;
 
{\rm (ii)}  $0\in \srisf (\ransf\cQ - \ransf\cA) $, where the strong relative interior $(\srisf)$ is defined in \cite[Definition 6.9]{plc_book}.
\end{assumption}

\begin{corollary} \label{c_full}
Under Assumption \ref{assume_Q}, 
$\sqrt{\cQ}\cA^{-1}\sqrt{\cQ} \big|_{\ransf\cQ}$  is  maximally monotone, if Assumption \ref{assume_A} is fulfilled.
\end{corollary}
\begin{proof}
In view of \cite[Corollary 25.6 or Proposition 25.41-(iv)]{plc_book}.
\end{proof}

Corollary \ref{c_full} claims  the implication of  Assumption \ref{assume_A}$\Longrightarrow$Assumption \ref{assume_max}, but the converse is not true.  We stress the significance of Assumption \ref{assume_A}-(ii). Without this,  there is no straightforward implication between Assumption \ref{assume_max} and Assumption \ref{assume_A}-(i). Indeed, the maximal monotonicity of $\cA$ does not imply that of  $\cP_{\ransf\cQ} \circ \cA^{-1}|_{\ransf\cQ}$, and vice versa. This will be shown in later examples.

In addition, without Assumption \ref{assume_A}-(ii), the next proposition claims that $\cT$ `{\it almost}' has full domain. 
\begin{proposition} \label{p_close}
Under Assumptions \ref{assume_Q}, \ref{assume_local} and \ref{assume_A}-(i), if $0\in \ransf \cA$, then $\overline{\ransf}(\cA+\cQ)  \supseteq \ransf\cQ$.
\end{proposition}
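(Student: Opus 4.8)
The plan is to show that the closure of $\ransf(\cA+\cQ)$ contains $\ransf\cQ$ by exploiting that $\cT$ fails to have full domain only due to a ``boundary'' gap, which disappears upon taking closure. Concretely, I would start from the characterization already at hand: by Lemma \ref{l_equality}-(4), $\domsf\cT$ (more precisely, $\cP_{\ransf\cQ}(\domsf\cT)$) is governed by $\ransf\bigl(\cI+(\sqrt{\cQ}\cA^{-1}\sqrt{\cQ})|_{\ransf\cQ}\bigr)$, and full domain is equivalent to this range being all of $\ransf\cQ$ (Theorem \ref{t_full} via restricted Minty, Theorem \ref{t_minty}). So the task reduces to showing that $\ransf\bigl(\cI+(\sqrt{\cQ}\cA^{-1}\sqrt{\cQ})|_{\ransf\cQ}\bigr)$ is \emph{dense} in $\ransf\cQ$ whenever $\cA$ is maximally monotone on $\cH$ (Assumption \ref{assume_A}-(i)), without the constraint-qualification Assumption \ref{assume_A}-(ii).

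The key steps, in order: (1) Set $\cB:=\sqrt{\cQ}\cA^{-1}\sqrt{\cQ}$, a monotone operator on $\cH$ (its restriction to $\ransf\cQ$ is monotone by Lemma \ref{l_mon_eq}, once one checks Assumption \ref{assume_local} holds — which it does because $\cA$ maximal monotone implies $\cA^{-1}$ monotone, hence $\cA^{-1}|_{\ransf\cQ}$ monotone). Note $\cB$ need not be maximally monotone on $\cH$ in general, but it \emph{is} monotone, and one can pass to its maximal monotone extension or work with its closure. (2) Recall the classical fact that for a monotone operator, the range of $\cI+\cB$ is convex-ish in the sense that its closure is convex; more usefully, if $\overline{\cB}$ denotes a maximal monotone extension, then by Minty's theorem $\ransf(\cI+\overline{\cB})=\cH$ (or $=\ransf\cQ$ working in that subspace as ambient). (3) The point is that $\overline{\ransf}(\cI+\cB|_{\ransf\cQ})\supseteq \overline{\ransf}(\cI+\cB_0|_{\ransf\cQ})$ fails in the wrong direction, so instead I would argue directly: for $z\in\ransf\cQ$ arbitrary, I want to solve $z\in x+\cB x$ approximately, i.e.\ find $x_n\in\ransf\cQ$ with $x_n+b_n\to z$, $b_n\in\cB x_n$. (4) Translate back through $\sqrt{\cQ}$: $b_n=\sqrt{\cQ}a_n$ with $a_n\in\cA^{-1}\sqrt{\cQ}x_n$, i.e.\ $\sqrt{\cQ}x_n\in\cA a_n$; so the approximate inclusion $x_n+\sqrt{\cQ}a_n\approx z$ with $\sqrt{\cQ}x_n\in\cA a_n$ is exactly an approximate version of the degenerate resolvent inclusion, and maximal monotonicity of $\cA$ on all of $\cH$ (the Minty surjectivity $\ransf(\cI+\cA)=\cH$, or more precisely applying it to a suitably scaled/shifted operator on $\cH$) should furnish, for each $n$, an exact solution of a nearby problem, yielding the approximating sequence.

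Putting it together, the cleanest route is probably: apply Minty's theorem on the full space $\cH$ to the maximally monotone operator $\cA$ to solve $\sqrt{\cQ}z - \sqrt{\cQ}x \in \cA x$ — wait, more carefully — for fixed $z\in\ransf\cQ$ consider solving $w\in\cA w + \sqrt{\cQ}(\sqrt{\cQ}w) $ type equations; since $\sqrt{\cQ}\sqrt{\cQ}=\cQ$ is only positive semidefinite this is $\ransf(\cA+\cQ)$ directly. So actually the statement is almost immediate from: $\cA$ maximal monotone on $\cH$ $\Rightarrow$ for any $\epsilon>0$, $\cA+\epsilon\cI$ is maximal monotone and strongly monotone, hence $\cA+\epsilon\cI+\cQ$ is surjective (Minty, since it is maximal monotone and strongly monotone, or just coercive), so $\ransf(\cA+\epsilon\cI+\cQ)=\cH\supseteq\ransf\cQ$; then let $\epsilon\downarrow 0$ and use a limiting argument (bounded sequences, demiclosedness of maximal monotone graphs, Assumption \ref{assume_Q}-(iii) closedness of $\ransf\cQ$) to conclude that each $y\in\ransf\cQ$ lies in $\overline{\ransf}(\cA+\cQ)$. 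The main obstacle is step (4)/the limiting argument: controlling the resolvent solutions $x_\epsilon$ of $y\in(\cA+\epsilon\cI+\cQ)x_\epsilon$ as $\epsilon\downarrow 0$ — one must show the relevant quantities ($\cQ x_\epsilon$, or $\epsilon x_\epsilon$) stay bounded or vanish appropriately so that $(\cA+\cQ)x_\epsilon\to y$ in norm, and this is exactly where Assumption \ref{assume_A}-(ii) would normally be used to get an \emph{exact} solution; without it we should only recover density, so care is needed to extract the right convergent subsequence and invoke closedness of $\ransf\cQ$ (Assumption \ref{assume_Q}-(iii)) to land the limit inside $\ransf\cQ$ rather than merely its closure.
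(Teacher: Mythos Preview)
Your proposal meanders through several false starts (the detour via Lemma~\ref{l_equality}-(4) and restricted Minty is irrelevant here) before settling on the Yosida--type regularization $\cA+\epsilon\cI+\cQ$, which is a legitimate idea. However, the limiting step you flag as ``the main obstacle'' is a \emph{real} gap, not a routine detail. From $y\in(\cA+\epsilon\cI+\cQ)x_\epsilon$ and monotonicity of $\cA$ against a fixed graph point $(p,q)\in\grasf\cA$, the best one extracts is
\[
\epsilon\|x_\epsilon\|^2+\|x_\epsilon\|_\cQ^2\ \le\ \|y-q\|\,\|x_\epsilon\|+\epsilon\|p\|\,\|x_\epsilon\|+\|p\|_\cQ\|x_\epsilon\|_\cQ+|\langle p,y-q\rangle|,
\]
which yields only that $\epsilon\|x_\epsilon\|$ stays \emph{bounded} (by roughly $\|y-q\|$), not that it tends to $0$. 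To force $\epsilon x_\epsilon\to 0$ one needs an extra structural property of $\cQ$, namely its $3^*$ monotonicity (rectangularity); without it the argument does not close, and your appeal to demiclosedness of $\grasf\cA$ or closedness of $\ransf\cQ$ does not supply the missing control. (Your last sentence about landing ``inside $\ransf\cQ$ rather than merely its closure'' is also confused: $\ransf\cQ$ is closed by Assumption~\ref{assume_Q}-(iii), so these coincide; the issue is whether $y$ lies in $\overline{\ransf}(\cA+\cQ)$.)

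By contrast, the paper's proof is a two--line citation: $\cQ$ is maximally monotone and $3^*$ monotone, $\cA+\cQ$ is maximally monotone (since $\domsf\cQ=\cH$), and then the Br\'ezis--Haraux range theorem \cite[Theorem~25.24]{plc_book} gives $\overline{\ransf}(\cA+\cQ)=\overline{\ransf\cA+\ransf\cQ}\supseteq\overline{\ransf\cQ}=\ransf\cQ$. Your regularization approach is essentially an attempt to reprove the Br\'ezis--Haraux theorem from scratch; this can be done, but the missing ingredient is precisely the $3^*$ monotonicity of $\cQ$, which you never invoke.
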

\begin{proof}
First, $\cQ$ is maximally monotone by \cite[Corollary 20.28]{plc_book}, and  $3^*$ monotone by \cite[Example 25.17]{plc_book}. Then, $\cA+\cQ$ is maximally monotone by \cite[Corollary 25.5]{plc_book}. Then, noting $\domsf\cA \subseteq \domsf\cQ=\cH$ as Assumption \ref{assume_Q},   \cite[Theorem 25.24]{plc_book} yields
\[
\overline{\ransf}(\cA+\cQ) = \overline{\ransf \cA +\ransf \cQ} \supseteq  \overline{0 +\ransf \cQ} =  \overline{\ransf} \cQ =\ransf\cQ,
\]
which is due to $0\in \ransf\cA$ and the closedness of $\ransf\cQ$.
\end{proof}

The above result  holds, without Assumption \ref{assume_A}-(ii). 
However, it does not necessarily hold that $\ransf (\cA+\cQ) \supseteq \ransf\cQ$.  Unfortunately, it is generally difficult to investigate the relation between  $\overline{\ransf}(\cA+\cQ)$ and $\ransf(\cA+\cQ)$. We believe there are some underlying connections between Assumption \ref{assume_A}-(ii) and this relation. This is left to future work. 

\subsection{Maximality of restricted monotonicity} \label{sec_max}
The restricted monotonicity is closely related to the {\it local monotonicity} \cite[Definition 25.7]{plc_book}, which requires $\grasf\cA \mcap (\cH \times C)$ to be  monotone with the set $C$ being open. This restricted version  extends this concept by confining the monotonicity within a subspace instead of an open neighbourhood.
The essential difference between the restricted and local versions of monotonicity lies in the fact that $\ransf\cQ$ is not open, and thus, $\intsf\ \ransf\cQ =\varnothing$. 

The restricted monotonicity is also somewhat equivalent to the so-called {\it $\cQ$-monotonicity} (adapted to our notation, the same below) proposed in \cite[Definition 2.4, Proposition 2.5]{bredies_ppa} and \cite[Definition 2.5]{naldi_thesis}. This notion is based on a preimage of degenerate preconditioner $\cQ$. Parallel to \cite[Proposition 2.6]{naldi_thesis}, Lemma \ref{l_mon_eq} equivalently transform the monotonicity of  $\grasf \cA \mcap (\cH \times \ransf\cQ)$ to that of $\cA^{-1}|_{\ransf\cQ}$. However, the next proposition shows that it  generally fails to further enforce maximality of the monotonicity of $\cA^{-1}|_{\ransf\cQ}$.
\begin{proposition} \label{p_fail}
Even if $\cP_{\ransf\cQ} \circ \cA^{-1}|_{\ransf\cQ}$ is maximally monotone, the monotonicity of  $\cA^{-1}|_{\ransf\cQ}$ cannot be maximal, unless $\cP_{\kersf\cQ} \cA^{-1} x =\kersf\cQ$, $\forall x\in \ransf\cQ \mcap \ransf\cA$.
\end{proposition}
\begin{proof}
Fix $(x,u)\in \ransf\cQ\times \cH$, such that $\langle x-y|u-v\rangle \ge 0$, $\forall (y,v)\in \grasf(\cA^{-1}|_{\ransf\cQ})$. We claim that it generally fails to obtain that $u \in (\cA^{-1}|_{\ransf\cQ}) (x)$. Indeed, observing $x,y\in\ransf\cQ$, the inner product becomes $\langle x-y|u_\rsf - v_\rsf \rangle \ge 0$. By maximal monotonicity of 
$\cP_{\ransf\cQ} \circ \cA^{-1}|_{\ransf\cQ}$ (as assumed), we have $u_\rsf \in \cP_{\ransf\cQ} \cA^{-1} x$. Then, $u$ should be expressed as $u=u_\rsf +s$  with any  $ s\in\kersf\cQ$, i.e., $u\in \cP_{\ransf\cQ}
\cA^{-1} x+\kersf\cQ$. Thus, it does not generally yield that $u\in \cA^{-1} x$, unless $\cP_{\ransf\cQ} \cA^{-1} x+\kersf\cQ \subseteq\cA^{-1} x$.
\end{proof}

The above reasoning shows that the failure is due to the `{\it invalid}' inner product, which performs in a proper subspace only. This inspires us to define the maximality restricted to $\ransf\cQ$ in the range of $\cA^{-1}|_{\ransf\cQ}$ as well, in order to remove the redundancy of the inner product. More precisely, considering $u\in (\cA^{-1}|_{\ransf\cQ} ) x$, we  wish to restrict $u$ in $\ransf\cQ$, and discard the kernel component $u_\ksf$.  This gives us this new operator---$ \cP_{\ransf\cQ} \circ \cA^{-1}|_{\ransf\cQ}$, of which the maximal monotonicity can then be properly defined, and finally connected to the full domain of $\cT$ due to Lemma \ref{l_max_mon} and Theorem \ref{t_full}. 

By comparison, the works \cite{bredies_ppa,naldi_thesis} developed a different route. They explore the so-called $\cQ$-monotonicity of $\cQ^{-1} \circ \cA$, which relies to a large extent on $\cQ^{-1}$. To address this maximality issue, \cite[Definition 2.15]{naldi_thesis} proposed the $\cQ$-quotient maximal monotonicity to overcome the difficulty of using $\cQ$-maximal monotonicity \cite[Proposition 2.9]{naldi_thesis} . 
There are a few advantages of using $\cA^{-1}$ in this work over using $\cQ^{-1}$ in \cite{bredies_ppa,naldi_thesis}:

(1) The set-valued operators encountered in our expositions include $\cA$ and $\cA^{-1}$ only. We avoid to use $\cQ^{-1}$ as a new multivalued operator, and do not need to define a complex composition of $\cQ^{-1} \circ \cA$ (cf. \cite[Sect. 2]{bredies_ppa} and \cite[Sect. 2.1.1]{naldi_thesis}). We avoid to define the new $\cQ$-monotonicity and further $\cQ$-quotient maximal monotonicity \cite[Definition 2.15]{naldi_thesis}.

(2) The operator $\cA^{-1}$ is naturally introduced in our approach, when developing an expression of $\cT x$ in Lemma \ref{l_X}. This paves an easy and straightforward way for investigating basic properties of $\cT$.

(3) It is natural to extend the Minty's theorem to the degenerate case based on the maximal monotonicity of  $ \cP_{\ransf\cQ} \circ \cA^{-1}|_{\ransf\cQ}$, without introducing new definitions as in \cite[Theorem 2.16-(ii)]{naldi_thesis}. All the concepts we used are standard in convex analysis.

\subsection{Examples} \label{sec_eg}
We  list several examples with degenerate $\cQ$ to show

(1) $\cA+\cQ$ is often non-surjective (see Example \ref{eg_1});

(2) The maximal monotonicity of $\cA$ does not necessarily imply that of  $ \sqrt{\cQ}\cA^{-1}\sqrt{\cQ}  \big|_{\ransf\cQ} $, and vice versa (see Examples \ref{eg_2}, \ref{eg_4}, \ref{eg_5} and \ref{eg_6});

(3) Assumption \ref{assume_A} is tight for the full domain of $\cT$ (see Example \ref{eg_2});

(4) Assumption \ref{assume_A} is not necessary for the full domain of $\cT$ (see Example \ref{eg_3}).

\begin{example} \label{eg_1}
Consider a toy example
\[
\cH=\R^2,\quad  \cA = \partial f, \text{\ where\ }  f(x,y)=|x|, \quad 
\cQ=\begin{bmatrix} 0 &0 \\ 0& 1 \end{bmatrix}.
\]
It is easy to see that $\ransf\cA= [-1,1]\times \{0\}$, $\ransf\cQ = \{0\} \times \R$, and $\ransf(\cA+\cQ) = [-1,1]\times \R  \supset\ransf\cQ$, though $\cA+\cQ$ is not surjective. One further has $\ransf\cA-\ransf\cQ=[-1,1] \times \R$, which contains 0 as a relative interior. Thus, Corollary \ref{c_full} concludes that $(\sqrt{\cQ}\cA^{-1}\sqrt{\cQ}) \big|_{\ransf\cQ} $ is maximally monotone.

Let us verify it by checking whether $\ransf(\cI+\sqrt{\cQ}\cA^{-1}\sqrt{\cQ} \big|_{\ransf\cQ} )=\ransf\cQ$. Indeed, noting that 
$\cA^{-1}: (x,y)\mapsto \{0\} \times \R$, if $(x,y)\in (-1,1) \times \{0\}$, we develop:
$\ransf \big(\cI+ (\sqrt{\cQ}\cA^{-1}\sqrt{\cQ}) \big|_{\ransf\cQ} \big)    \supseteq    
\big(\cI+ (\sqrt{\cQ}\cA^{-1}\sqrt{\cQ}) \big|_{\ransf\cQ} \big) 
\begin{bmatrix}
 0 \\ 0 \end{bmatrix}
   =  \sqrt{\cQ} \cA^{-1} \begin{bmatrix}
0 \\ 0 \end{bmatrix}  =   \{0\} \times \R = \ransf\cQ$. This shows $(\sqrt{\cQ}\cA^{-1}\sqrt{\cQ}) \big|_{\ransf\cQ}$ is maximally monotone by Lemma \ref{l_minty}.
\end{example}

\begin{example} \label{eg_2}
In \cite[Remark 2.2]{bredies_ppa}, the authors proposed an interesting counter-example that deserves particular treatment. The problem is
\[
\cH =\R^2, \cA = \partial f, \text{\ where\ } f(x,y)=\max\{e^y-x, 0\} \text{\ and\ } \cQ = \begin{bmatrix}
1 &0 \\ 0&0 \end{bmatrix}.
\]
Based on basic subdifferential calculus,  $\cA$ and $\cA^{-1}$ are obtained as
\[
\cA: (x,y)\mapsto \left\{ \begin{array}{ll}
\{(-1, e^y)\}, & \textrm{if\ } e^y > x; \\
\{(0, 0)\}, & \textrm{if\ } e^y < x; \\ {}
 \{ (-t, te^y): t\in [0,1]\} , & \textrm{if\ } e^y = x.
\end{array}
\right.
\]
\[
\cA^{-1}: (u_1,u_2)\mapsto  
\left\{ \begin{array}{ll}
(-\infty, u_2) \times \{\log u_2\}, & \textrm{if\ }  (u_1,u_2) \in \{-1\} \times (0, +\infty); \\
\{ (-\frac{u_2}{u_1}, \log (-\frac{u_2}{u_1}) )\} , & \textrm{if\ }  (u_1,u_2)\in (-1, 0) \times  (0, +\infty); \\ { }
\{(p,q): p\ge e^q \}, & \textrm{if\ }  (u_1,u_2)=(0,0); \\
\varnothing, & \textrm{otherwise}.
\end{array}\right.
\]

We then have  $(\sqrt{\cQ}\cA^{-1}\sqrt{\cQ}) \big|_{\ransf\cQ}: 
(u_1, 0) \mapsto \left\{ \begin{array}{ll}
(0, +\infty) \times \{0\}, & \textrm{if\ }  u_1 = 0; \\
\varnothing, & \textrm{otherwise}.
\end{array}\right.$  It is obvious that $\ransf(\cA+\cQ)\mcap \ransf \cQ = (0,+\infty)\times \{0\}\subset \ransf\cQ=\R\times \{0\} $, which shows that $\ransf(\cA+\cQ)\nsupseteq \ransf \cQ$, and $(\cA+\cQ)^{-1}(x,0)=\varnothing$,  $\forall x \in (-\infty, 0]$.

Since $\cT$ is not of full domian, the monotonicity of $(\sqrt{\cQ}\cA^{-1}\sqrt{\cQ}) \big|_{\ransf\cQ}$  must not be maximal by  Theorem \ref{t_full}. Let us check it. Noting that $\domsf (\sqrt{\cQ}\cA^{-1}\sqrt{\cQ}) \big|_{\ransf\cQ} = \{ (0,0) \}$, we then have $\ransf \big(\cI + (\sqrt{\cQ}\cA^{-1}\sqrt{\cQ}) \big|_{\ransf\cQ} \big) =  \big(\cI + (\sqrt{\cQ}\cA^{-1}\sqrt{\cQ}) \big|_{\ransf\cQ} \big)  \begin{bmatrix}
0 \\ 0 \end{bmatrix} = \sqrt{\cQ}\cA^{-1} \begin{bmatrix}
0 \\ 0 \end{bmatrix} 
 =  (0, +\infty) \times \{0\} \subset \R \times\{ 0\}$.  Thus, by Lemma \ref{l_minty},  $(\sqrt{\cQ}\cA^{-1}\sqrt{\cQ}) \big|_{\ransf\cQ}$ is not maximally monotone.

Finally, it is easy to check that Assumption \ref{assume_A}-(ii) is not satisfied. Indeed,$\ransf\cA=[-1,0)\times [0,+\infty) \mcup (0,0) $  and $\ransf\cQ -\ransf\cA=\R \times (-\infty, 0]$, and thus, $0\notin \srisf(\ransf\cA - \ransf\cQ)$. This shows that Assumption \ref{assume_A} is tight. In addition, observe that $0\in \ransf\cA$ and $\overline{\ransf}(\cA+\cQ) = \R\times [0,+\infty) \supset  \ransf\cQ$, which verifies Proposition \ref{p_close}.
\end{example}

\begin{example} \label{eg_3}
We consider Example \ref{eg_2} but with the metric  $\cQ=\begin{bmatrix}
0 &0\\0&1 \end{bmatrix}$. Then, based on $\cA$ and $\cA^{-1}$ given in Example \ref{eg_2},  we deduce that
$(\sqrt{\cQ}\cA^{-1}\sqrt{\cQ}) \big|_{\ransf\cQ}: 
(0, u_2) \mapsto \left\{ \begin{array}{ll}
\{0\} \times \R, & \textrm{if\ }  u_2 = 0; \\
\varnothing, & \textrm{otherwise}.
\end{array}\right.$
 $\ransf(\cA+\cQ)=[-1,0]\times \R$. Clearly, $\ransf(\cA+\cQ)\supseteq \ransf\cQ= \{0\} \times \R $. By Theorem \ref{t_full},  $(\sqrt{\cQ}\cA^{-1}\sqrt{\cQ}) \big|_{\ransf\cQ}$  must be maximally monotone. Indeed, $\ransf(\cI + \sqrt{\cQ}\cA^{-1}\sqrt{\cQ} \big|_{\ransf\cQ}) = 
(\cI + \sqrt{\cQ}\cA^{-1}\sqrt{\cQ} \big|_{\ransf\cQ}) 
\begin{bmatrix}
0 \\ 0 \end{bmatrix} =  \sqrt{\cQ}\cA^{-1}
\begin{bmatrix} 0 \\ 0 \end{bmatrix}
=\{0\}\times \R =\ransf\cQ$, and thus by Lemma \ref{l_minty},  $(\sqrt{\cQ}\cA^{-1}\sqrt{\cQ}) \big|_{\ransf\cQ}$ is  maximally monotone.

However, $\ransf\cQ -\ransf\cA=[0,1] \times \R$ and   $0\notin \srisf(\ransf\cA - \ransf\cQ)$.  This shows that the condition of  $0\in \srisf(\ransf\cA - \ransf\cQ)$ is sufficient but not necessary condition for the full domain of $\cT$. 
\end{example}

The following examples show that  $(\sqrt{\cQ}\cA^{-1}\sqrt{\cQ}) \big|_{\ransf\cQ}$ may be  maximally monotone, though $\cA$ is not.
\begin{example} \label{eg_4}
Considering $\cA:\R^2\mapsto\R^2: x \mapsto
\left\{ \begin{array}{ll}
x, & \textrm{if\ }  x\ne (0, -1); \\
\varnothing, & \textrm{otherwise}.
\end{array}\right.$ and $\cQ = \begin{bmatrix}
1 & 0 \\ 0 &0 \end{bmatrix}$, $\cA$ is obviously not maximally monotone, because $\grasf\cA$ can be extended by adding back the lacking point $\{ ( (0,-1), (0,-1) )\}$ to recover the identity operator. However, $(\sqrt{\cQ}\cA^{-1}\sqrt{\cQ}) \big|_{\ransf\cQ} = \cI|_{\ransf\cQ}: (x_1, 0)\mapsto (x_1, 0)$, $\forall x_1\in\R$. This is linear and continuous, and hence maximally monotone \cite[Corollary 20.28]{plc_book}.
\end{example}

One can also restrict the domain of  $\cA$ in Example \ref{eg_4} to a proper subspace as follows.
\begin{example} \label{eg_5}
Considering $\cA:\R^2\mapsto\R^2: x \mapsto
\left\{ \begin{array}{ll}
x, & \textrm{if\ }  x\in \R\times \{0\}; \\
\varnothing, & \textrm{otherwise}.
\end{array}\right.$ and $\cQ = \begin{bmatrix}
1 & 0 \\ 0 &0 \end{bmatrix}$, $\cA$ is obviously not maximally monotone, because $\grasf\cA$ can be extended by, for example, adding the point $\{ ( (0,1), (0,1) )\}$.  However, $(\sqrt{\cQ}\cA^{-1}\sqrt{\cQ}) \big|_{\ransf\cQ} = \cI|_{\ransf\cQ}: (x_1, 0)\mapsto (x_1, 0)$ ($\forall x_1\in\R$) is maximally monotone.
\end{example}

Example \ref{eg_5} also shows that it is difficult to enforce maximality to the monotonicity of an operator defined on a proper subspace, as discussed in Proposition \ref{p_fail}. The next example connects $\cA$ to the subdifferential of some function.
\begin{example} \label{eg_6}
Considering $f:\R^2\mapsto \R: (x_1,x_2) \mapsto 
\left\{ \begin{array}{ll}
\frac{1}{2} x_1^2, & \textrm{if\ }  x\ne (0,1); \\
1, & \textrm{if\ }  x= (0,1)
\end{array}\right.$, and thus 
 $ \partial f:  (x_1,x_2) \mapsto
\left\{ \begin{array}{ll}
(x_1, 0) , & \textrm{if\ }  x\ne (0,1); \\
\varnothing, & \textrm{otherwise}.
\end{array}\right.$. This is not maximally monotone, because $\grasf\cA$ can be extended by adding the point $\{ ( (0,1), (0,0) )\}$.  
Noting  $ (\partial f)^{-1}:  (x_1,x_2) 
\mapsto \left\{ \begin{array}{ll}
\{0\} \times (\R \backslash \{1\}), 
& \textrm{if\ }  (x_1,x_2)=(0, 0); \\
\{x_1\} \times \R, & \textrm{if\ }   (x_1, x_2) \in (\R\backslash \{0\} ) \times \{0\}; \\
\varnothing, & \textrm{if\ } (x_1, x_2) \in \R\times (\R\backslash \{0\} )
\end{array}\right.$ and letting $\cQ = \begin{bmatrix}
1 & 0 \\ 0 &0 \end{bmatrix}$, we obtain $(\sqrt{\cQ} \circ (\partial f)^{-1} \circ \sqrt{\cQ}) \big|_{\ransf\cQ} = \cI|_{\ransf\cQ}$, which is maximally monotone.
\end{example}

\section{Single-valuedness of degenerate preconditioned resolvent} \label{sec_single}
Let us now study the condition for the  single-valuedness of $\cT$. More precisely, $\cT$ given as \eqref{T_single} is said to be single-valued, if $\cT x$ is always a singleton $\{y\}$, $\forall x\in\domsf \cT$.

\subsection{Restricted injectivity}
As mentioned in Sect. \ref{sec_related}, a standard condition for the single-valuedness of $\cT$ is  disjoint injectivity of $\cA+\cQ$  \cite[Theorem 2.1-(ix)]{bau_review}, \cite[Proposition 3.8]{plc_warped}. Moreover, a sufficient and necessary condition, proposed in \cite[Proposition 1-(2)]{arias_infimal} and \cite[Proposition 2.4]{naldi_thesis}, is that $\cA+\cQ$ is injective on $\ransf\cQ$. Here, Theorem \ref{t_single} claims that this condition can be further weakened to  the {\it disjoint injectivity on $\ransf\cQ$ w.r.t. $\kersf\cQ$} only, under Assumptions \ref{assume_Q} and   \ref{assume_local}. 

\begin{definition} \label{def_inj}
$\cA+\cQ$ is disjointly injective on $\ransf\cQ$ w.r.t. $\kersf\cQ$, i.e., it satisfies  
\be \label{e2}
(\cA+\cQ) y_1 \mcap (\cA+\cQ) y_2 \mcap \ransf\cQ \ne \varnothing
\Longrightarrow y_{1,\ksf} = y_{2,\ksf}, \quad
\forall y_1,y_2\in \domsf (\cA+\cQ).
\ee
\end{definition}

For convenience of reference, we put Definition \ref{def_inj} as the following assumption.
\begin{assumption} \label{assume_single}
{\rm [Single-valuedness of $\cT$]}

{\rm (i)} $\cA+\cQ$ is disjointly injective on $\ransf\cQ$ w.r.t. $\kersf\cQ$;

{\rm (ii)} $\kersf\cQ \mcap \big( \cA^{-1}\cQ(x_\rsf-y_\rsf) - y_\rsf \big)$  is a singleton, $\forall x\in\domsf \cT$, where $y_\rsf = (\cP_{\ransf\cQ} \circ \cT) (x_\rsf)$;

{\rm (iii)} $\kersf\cQ \mcap \big( \cA^{-1}\cQ(x_\rsf-y_\rsf) - y \big) = \{ 0 \}$, $\forall x\in\domsf \cT$, provided that a particular element $y\in\cT x$ is given;

{\rm (iv)}  $\cP_{\kersf\cQ} \circ \cA^{-1} \circ \cQ$ is single-valued.
\end{assumption}

Proposition \ref{p_single} shows the relations of Assumption \ref{assume_single}.
\begin{proposition} \label{p_single}
Regarding \eqref{T_single} under Assumptions \ref{assume_Q} and \ref{assume_local}, Assumption \ref{assume_single} has the following  relations: (iv)$\Longrightarrow$(i)$\Longleftrightarrow$(ii)$\Longleftrightarrow$(iii).
\end{proposition}
\begin{proof}
The proof follows the order of (i)$\Longrightarrow$(ii)$\Longrightarrow$(iii)$\Longrightarrow$(i).

 (i)$\Longrightarrow$(ii): Since $(\cA+\cQ)y_1 \mcap (\cA+\cQ)y_2 \mcap \ransf\cQ \ne \varnothing$, pick  a point $x$, such that 
$\cQ x \in (\cA+\cQ)y_1 \mcap (\cA+\cQ)y_2$.
We have by (i) that $y_{1,\ksf} = y_{2,\ksf} :=y_\ksf$. This also implies that $\{y_1,y_2\}\subseteq (\cA+\cQ)^{-1}\cQ x = \cT x \Longrightarrow y_{1,\ksf} = y_{2,\ksf}$. Then (ii) is obtained, combining with the proof of Lemma \ref{l_X}.

(ii)$\Longrightarrow$(iii): Let $\kersf\cQ \mcap \big( \cA^{-1}\cQ(x_\rsf-y_\rsf) - y_\rsf \big) = \{y_\ksf\}$. Subtracting  $y_k$ on both sides yields: $ (\kersf\cQ-y_\ksf) \mcap \big( \cA^{-1}\cQ(x_\rsf-y_\rsf) - y_\rsf - y_\ksf\big) 
= \kersf\cQ \mcap \big( \cA^{-1}\cQ(x_\rsf-y_\rsf) - y_\rsf - y_\ksf\big) 
= \kersf\cQ \mcap \big( \cA^{-1}\cQ(x_\rsf-y_\rsf) - y \big) 
= \{0\}$, where the first equality is due to $y_\ksf\in \kersf \cQ$ and $\kersf\cQ-y_\ksf = \kersf \cQ$, the second equality comes from $y=y_\rsf+y_\ksf$, by $y\in\cT x$ and Lemma \ref{l_X}.

(iii)$\Longrightarrow$(i):  By the equivalence between (iii) and (ii), and Lemma \ref{l_X}, we have by (iii) that $\{y_1,y_2\}\subseteq \cT x \Longrightarrow y_{1,\ksf} = y_{2,\ksf}$. Noting that $\cT = (\cA+\cQ)^{-1}\cQ$, this is equivalent to $\cQ x\in (\cA+\cQ) y_1 \mcap (\cA+\cQ) y_2 \Longrightarrow  y_{1,\ksf} = y_{2,\ksf}$, which is exactly (i). 

Finally, we show (iv)$\Longrightarrow$(i).  Since $(\cA+\cQ)y_1 \mcap (\cA+\cQ)y_2 \mcap \ransf\cQ \ne \varnothing$, pick  a point $x$, such that 
$\cQ x \in (\cA+\cQ)y_1 \mcap (\cA+\cQ)y_2$. By Lemma \ref{l_single}, we have $y_{1,\rsf} = y_{2,\rsf} :=y_\rsf$, and $y_{1,\ksf}+y_\rsf \in \cA^{-1} \cQ (x_\rsf-y_\rsf)$,  $y_{2,\ksf}+y_\rsf \in \cA^{-1} \cQ (x_\rsf-y_\rsf)$.  Then, apply the projection $\cP_{\kersf\cQ}$ to both sides yields
$\{ y_{1, \ksf},  y_{2, \ksf} \} \subseteq \cP_{\kersf\cQ} \cA^{-1} \cQ (x_\rsf-y_\rsf)$. By (iv), we have $y_{1, \ksf} = y_{2, \ksf} $, from which follows (i).
\end{proof}

From Proposition \ref{p_single} immediately follows the next theorem.
\begin{theorem} \label{t_single}
Given \eqref{T_single} under Assumptions \ref{assume_Q} and  \ref{assume_local}, $\cT$ is single-valued, 

{\rm (i)} if and only if one of Assumption \ref{assume_single}-(i--iii)  holds;

{\rm (ii)} if  Assumption \ref{assume_single}-(iv) holds.
\end{theorem}
\begin{proof}
(i) Clear by the proof of Proposition \ref{p_single}. It is also easy to see from the proof of Lemma \ref{l_X} the equivalence between single-valuedness and Proposition \ref{p_single}-(ii).

(ii) Clear by the proof of Proposition \ref{p_single}.
\end{proof}

\begin{remark}
{\rm (i)} Theorem \ref{t_single}-(ii) is sufficient  but not necessary for single-valuedness. Actually, this condition is much stronger than (i), which will be shown in Sect. \ref{sec_eg_2}.

{\rm (ii)}  There is no need to enforce $\cP_{\ransf\cQ} \cA^{-1}\cQ$ to be single-valued, since the uniqueness of $y_\rsf$ has been guaranteed by Lemma \ref{l_single}. This is irrelevant to single-valuedness of $\cP_{\ransf\cQ}\cA^{-1}\cQ$, which is, indeed, often multi-valued in practice.

{\rm (iii)} Assumption \ref{assume_single}-(iii) has similar spirit with the most recent result of \cite[Theorem 3.1]{fadili_cone} for regularized optimization problems. Their potential connections, particularly from the geometric view, deserve further explorations in future work.
\end{remark}

We are now ready to summarize the conditions for the well-definedness of $\cT$.
\begin{corollary} \label{c_well}
Under Assumptions \ref{assume_Q} and \ref{assume_local}, 
$\cT$ given as \eqref{T}  is well-defined,

{\rm (i)}  if and only if Assumptions \ref{assume_max} and \ref{assume_single}-(i) (or (ii), (iii)) are fulfilled;

{\rm (ii)}  if Assumptions \ref{assume_A} and \ref{assume_single}-(iv) are satisfied.
\end{corollary}

\subsection{A guiding example} \label{sec_eg_2}
Consider the following example of \eqref{T_single} with
\[
\cA = \partial f\textrm{\ with\ } f: \R^3\mapsto\R: x\mapsto \|x\|_1,\quad \cQ=\begin{bmatrix}
1 &0&2 \\  0&4&-4 \\ 2& -4& 8 \end{bmatrix},\quad
x=\begin{bmatrix} 0 \\  1\\  1/2 \end{bmatrix}. 
\]  
It is easy to verify that  $\cT x$ is a singleton $\{y\}$ with   $y =\begin{bmatrix}
0  & 1/4  & 0 \end{bmatrix}^\top$, and thus, 
$q = \cQ (x_\rsf - y_\rsf) =  \begin{bmatrix}
1 &  1 &  1 \end{bmatrix}^\top$, and
$\cA^{-1} \cQ (x_\rsf - y_\rsf)= \partial (\|\cdot\|_1^*) (q) =[0,+\infty)\times [0,+\infty)\times [0,+\infty)$. Using  $\kersf \cQ = \xi \begin{bmatrix}
-2 &  1 & 1 \end{bmatrix}^\top$, $\forall \xi\in\R$, the single-valuedness coincides with $\partial (\|\cdot\|_1^*) (q) \mcap \kersf \cQ = \{0\}$, but $\cP_{\kersf \cQ} \big( \partial  (\|\cdot\|_1^*) (q)
\big) $, representing the projection of positive orthant in $\R^3$ onto the line $\R(-2, 1, 1)$, is clearly multi-valued. This shows that  Assumption \ref{assume_single}-(iv) is not necessary for single-valuedness of $\cT$.


\section{Convergence analysis}
\label{sec_con}
From now on, we start to assume the solution existence of \eqref{p1}, which is not needed in the previous sections.
\begin{assumption} \label{assume_zero}
{\rm [Solution existence]} 
$\zersf \cA := \cA^{-1} (0) \ne \varnothing$.
\end{assumption}

Assumption \ref{assume_zero} is quite a standard condition in literature, though it would be also interesting to investigate the behaviours of PPA \eqref{T} for the case of $\zersf\cA =\varnothing$, which is known as {\it inconsistent feasibility setting} \cite{hhb_drs}.

\subsection{Convergence in the range space}
Now, we turn to the convergence analysis of \eqref{T}.  The following is an intermediate result from  the proof of \cite[Theorem 3.4]{latafat_2017}, which is also crucial  for our convergence analysis. 
\begin{lemma} \label{l_fix}
Given $\cT$ as \eqref{T} under Assumptions \ref{assume_Q} and \ref{assume_local}, then  
\[
\Fixsf (\cP_{\ransf\cQ} \circ \cT) = 
\cP_{\ransf\cQ} (\zersf \cA).
\]
\end{lemma}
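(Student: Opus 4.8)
The plan is to prove the set equality $\Fixsf(\cP_{\ransf\cQ}\circ\cT) = \cP_{\ransf\cQ}(\zersf\cA)$ by establishing both inclusions, working throughout with the characterization $y\in\cT x \iff \cQ(x-y)\in\cA y$.

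First I would prove $\supseteq$. Take $z\in\zersf\cA$, so $0\in\cA z$, and set $x_\rsf := \cP_{\ransf\cQ} z \in \ransf\cQ$. I claim $x_\rsf \in \Fixsf(\cP_{\ransf\cQ}\circ\cT)$. To see $x_\rsf \in \domsf\cT$, observe that $\cQ x_\rsf = \cQ z$ by Fact~\ref{f_T}-(i), so $\cQ(x_\rsf - z) = 0 \in \cA z$, which gives $z \in (\cA+\cQ)^{-1}\cQ x_\rsf = \cT x_\rsf$. Then $\cP_{\ransf\cQ}\cT x_\rsf \ni \cP_{\ransf\cQ} z = x_\rsf$, and since $\cP_{\ransf\cQ}\circ\cT$ is single-valued by Lemma~\ref{l_single}, this yields $(\cP_{\ransf\cQ}\circ\cT)(x_\rsf) = x_\rsf$, i.e.\ $x_\rsf$ is a fixed point. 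Hence $\cP_{\ransf\cQ}(\zersf\cA) \subseteq \Fixsf(\cP_{\ransf\cQ}\circ\cT)$.

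Next I would prove $\subseteq$. Let $x_\rsf \in \Fixsf(\cP_{\ransf\cQ}\circ\cT)$; in particular $x_\rsf\in\ransf\cQ$ and, by Fact~\ref{f_T}-(ii), $x_\rsf\in\domsf\cT$. Pick any $y\in\cT x_\rsf$, so $\cQ(x_\rsf - y)\in\cA y$. The fixed-point property says $\cP_{\ransf\cQ} y = x_\rsf$, i.e.\ $y_\rsf = x_\rsf$, so $x_\rsf - y = -y_\ksf \in\kersf\cQ$, whence $\cQ(x_\rsf - y) = 0$. Therefore $0\in\cA y$, i.e.\ $y\in\zersf\cA$, and $x_\rsf = \cP_{\ransf\cQ} y \in \cP_{\ransf\cQ}(\zersf\cA)$. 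This gives the reverse inclusion and completes the proof.

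I do not expect any serious obstacle here: the argument is essentially a direct unwinding of definitions once one uses Fact~\ref{f_T} (to relate $\cQ x_\rsf$ and $\cQ$ on the $\ransf\cQ$-part) and Lemma~\ref{l_single} (to make $(\cP_{\ransf\cQ}\circ\cT)(x_\rsf)$ a genuine single value rather than a set). The only point requiring a little care is the well-definedness/domain issue --- ensuring $x_\rsf\in\domsf\cT$ in the $\supseteq$ direction, for which exhibiting the explicit preimage $z$ suffices --- and noting that Assumption~\ref{assume_sol} is what guarantees the sets in question are nonempty, though the set identity itself would hold vacuously otherwise.
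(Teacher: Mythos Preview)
Your proof is correct and follows essentially the same route as the paper: both inclusions are obtained by unwinding $y\in\cT x \iff \cQ(x-y)\in\cA y$ together with $\cT\circ\cP_{\ransf\cQ}=\cT$ and the single-valuedness of $\cP_{\ransf\cQ}\circ\cT$ from Lemma~\ref{l_single}. The only cosmetic difference is that the paper starts from $x\in\zersf\cA$ and projects at the end, whereas you first project to $x_\rsf$ and then exhibit $z$ as a preimage; and in the $\subseteq$ direction, the membership $x_\rsf\in\domsf\cT$ already follows from $x_\rsf\in\Fixsf(\cP_{\ransf\cQ}\circ\cT)$ without invoking Fact~\ref{f_T}-(ii).
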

\begin{proof}
(1) $x\in \zersf\cA \Longrightarrow 0 \in \cA x \Longrightarrow \cQ x \in (\cA+\cQ) x \Longrightarrow x \in (\cA+\cQ)^{-1}\cQ x \Longrightarrow \cP_{\ransf\cQ} x = \cP_{\ransf\cQ} (\cA+\cQ)^{-1}\cQ x =( \cP_{\ransf\cQ} \circ \cT)( x)  
 = (\cP_{\ransf\cQ} \circ \cT)( \cP_{\ransf\cQ} x) \Longrightarrow
 \cP_{\ransf\cQ} x \in \Fixsf(\cP_{\ransf\cQ} \circ \cT)$.
Therefore, $x\in \zersf\cA \Longrightarrow \cP_{\ransf\cQ} x\in \cP_{\ransf\cQ} (\zersf\cA)$. 
 Thus, we obtain  $\cP_{\ransf\cQ} (\zersf\cA) \subseteq \Fixsf(\cP_{\ransf\cQ} \circ \cT)$.

\vskip.2cm
(2) Let $x \in \Fixsf(\cP_{\ransf\cQ} \circ \cT) \Longrightarrow
x = (\cP_{\ransf\cQ} \circ \cT) x \Longrightarrow
\cQ x = \cQ (\cP_{\ransf\cQ} \circ \cT) x = \cQ \cT x$.
On the other hand, $\cT = (\cA+\cQ)^{-1}\cQ \Longrightarrow  0\in\cA (\cT x) +\cQ \cT x-\cQ x$, $\forall x\in \domsf\cT$. Combining both above, we then have $x \in \Fixsf(\cP_{\ransf\cQ} \circ \cT) \Longrightarrow 
0 \in\cA (\cT x) \Longrightarrow \cT x \subseteq \zersf\cA \Longrightarrow (\cP_{\ransf\cQ} \circ \cT) (x) \in \cP_{\ransf\cQ} (\zersf\cA)$. Since $x = (\cP_{\ransf\cQ} \circ \cT) (x)$, we have $ x \in \cP_{\ransf\cQ} (\zersf\cA)$. Thus, $\Fixsf(\cP_{\ransf\cQ} \circ \cT) \subseteq \cP_{\ransf\cQ} (\zersf\cA)$.
\end{proof}

Theorem \ref{t_con_ran} establishes the weak convergence of $\cP_{\ransf\cQ} x^k \weak \cP_{\ransf\cQ} x^\star$, and its connections to existing works will be discussed in Sect. \ref{sec_diss}. 
For notational simplicity, we  denote $x_\rsf^k :=\cP_{\ransf\cQ} x^k$, $\forall k\in\N$.
\begin{theorem}    \label{t_con_ran} 
{\rm [Weak convergence in $\ran \cQ$]}
Under Assumptions \ref{assume_Q}, \ref{assume_local}, \ref{assume_max} and \ref{assume_zero}, let $\{x^k\}_{k\in \N}$ be a sequence generated by \eqref{T}. Then $x_\rsf^k \weak \cP_{\ransf\cQ} x^\star$ for some $x^\star \in \zersf \cA$, as $k \rightarrow \infty$.
\end{theorem}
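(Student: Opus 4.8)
The plan is to run a Krasnosel'ski\u\i--Mann / Opial-type argument for the operator $\cS := \cP_{\ransf\cQ} \circ \cT|_{\ransf\cQ}$, carried out \emph{inside the Hilbert space} $(\ransf\cQ, \|\cdot\|_\cQ)$ rather than the original $\cH$. The first step is to observe that the iterates satisfy $x_\rsf^{k+1} = \cS x_\rsf^k$: indeed $x^{k+1} = \cT x^k$ and by Fact~\ref{f_T}-(ii) $\cT x^k = \cT(\cP_{\ransf\cQ} x^k) = \cT x_\rsf^k$, so applying $\cP_{\ransf\cQ}$ and using that $\cP_{\ransf\cQ}\circ\cT$ is single-valued (Lemma~\ref{l_single}) gives $x_\rsf^{k+1} = \cS x_\rsf^k$ with all quantities well-defined; note also that Theorem~\ref{t_full} (via Assumption~\ref{assume_max}) guarantees $\cT$, hence $\cS$, has full domain $\ransf\cQ$, so the iteration never stalls. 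By Lemma~\ref{l_fix}, $\Fixsf \cS = \cP_{\ransf\cQ}(\zersf\cA)$, which is nonempty by Assumption~\ref{assume_sol}; fix $x^\star \in \zersf\cA$ and write $x_\rsf^\star := \cP_{\ransf\cQ} x^\star \in \Fixsf\cS$.

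Next I would establish Fej\'er monotonicity of $\{x_\rsf^k\}$ with respect to $\Fixsf\cS$ in the $\cQ$-seminorm. By Fact~\ref{f_fne}, $\cT$ is $\cQ$-firmly nonexpansive, and in the restricted form \eqref{fne_r} this reads, for any $z_\rsf \in \Fixsf\cS$,
\[
\|x_\rsf^{k+1} - z_\rsf\|_\cQ^2 + \|x_\rsf^{k+1} - x_\rsf^k\|_\cQ^2 \le \|x_\rsf^k - z_\rsf\|_\cQ^2,
\]
using $\cS z_\rsf = z_\rsf$. Hence $\{\|x_\rsf^k - z_\rsf\|_\cQ\}$ is nonincreasing (so bounded) and $\sum_k \|x_\rsf^{k+1} - x_\rsf^k\|_\cQ^2 < \infty$, in particular $\|x_\rsf^{k+1} - x_\rsf^k\|_\cQ = \|(\cI - \cS)x_\rsf^k\|_\cQ \to 0$. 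Because $\cQ$ is closed (Assumption~\ref{assume_Q}-(iii)), the seminorm $\|\cdot\|_\cQ$ restricted to $\ransf\cQ$ is equivalent to $\|\cdot\|$ there (the Remark after Assumption~\ref{assume_Q} and \cite[Fact 2.26]{plc_book}), so boundedness in $\|\cdot\|_\cQ$ gives genuine boundedness of $\{x_\rsf^k\}$ in $\cH$, and the subspace $\ransf\cQ$ is weakly sequentially closed.

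The final step is an Opial argument. Let $\bar x$ be any weak sequential cluster point of $\{x_\rsf^k\}$, say $x_\rsf^{k_j} \weak \bar x$; then $\bar x \in \ransf\cQ$. Since $(\cI - \cS)x_\rsf^{k_j} \to 0$ strongly in $\ransf\cQ$ and $\cS$ is $\cQ$-nonexpansive, the $\cQ$-demiclosedness principle for $\cI - \cP_{\ransf\cQ}\circ\cT|_{\ransf\cQ}$ (Lemma~\ref{l_demi_T}, applied with $D$ a suitable closed subset of $\ransf\cQ$ containing the bounded orbit and its weak limits) yields $(\cI-\cS)\bar x = 0$, i.e. $\bar x \in \Fixsf\cS$. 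To upgrade from ``a cluster point'' to ``the weak limit'' I invoke the standard Opial lemma in the Hilbert space $\ransf\cQ$ with inner product $\langle\cdot|\cdot\rangle_\cQ$: Fej\'er monotonicity with respect to $\Fixsf\cS$ plus the fact that every weak cluster point lies in $\Fixsf\cS$ forces $\{x_\rsf^k\}$ to converge weakly (in $\ransf\cQ$, hence in $\cH$) to a single point of $\Fixsf\cS = \cP_{\ransf\cQ}(\zersf\cA)$; that limit has the form $\cP_{\ransf\cQ} x^\star$ for some $x^\star \in \zersf\cA$, which is the claim.

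The main obstacle is bookkeeping about which topology and which norm one is working in: the weak convergence must be read off in the closed subspace $\ransf\cQ$ equipped with $\langle\cdot|\cdot\rangle_\cQ$, and one must be careful that (a) weak convergence in $(\ransf\cQ,\langle\cdot|\cdot\rangle_\cQ)$ coincides with weak convergence in $\cH$ restricted to $\ransf\cQ$ — which holds precisely because $\cQ$ is closed, so $\sqrt{\cQ}$ is an isomorphism of $\ransf\cQ$ onto itself up to equivalent norms — and (b) the demiclosedness input Lemma~\ref{l_demi_T} is applied on a domain $D$ that is genuinely closed and contains the whole orbit together with its weak cluster points. Once these topological identifications are in place, the Krasnosel'ski\u\i--Mann/Opial machinery goes through verbatim as in the non-degenerate case.
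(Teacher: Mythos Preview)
Your proposal is correct and follows essentially the same route as the paper's proof: Fej\'er monotonicity of $\{x_\rsf^k\}$ in $\|\cdot\|_\cQ$ via \eqref{fne_r}, boundedness in $\cH$ from closedness of $\ransf\cQ$, identification of weak cluster points in $\Fixsf(\cP_{\ransf\cQ}\circ\cT)=\cP_{\ransf\cQ}(\zersf\cA)$ via the $\cQ$-demiclosedness principle (Lemma~\ref{l_demi_T}), and then uniqueness of the limit. The only cosmetic difference is that the paper spells out the uniqueness step by hand (its Step-3) using the standard two-cluster-point identity, whereas you invoke Opial's lemma directly in the Hilbert space $(\ransf\cQ,\langle\cdot|\cdot\rangle_\cQ)$; the paper itself remarks in its discussion that Opial's lemma can replace Step-3, so this is not a genuinely different approach.
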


\vskip.2cm
\begin{proof}
The proof is divided into 3 steps:

Step-1: For every $x_\rsf^\star \in \cP_{\ransf\cQ} ( \zersf \cA)$, $\lim_{k\rightarrow \infty}$  $\| x_\rsf^k - x_\rsf^\star \|_\cQ $ exists.  

Step-2: $\{ x_\rsf^k\}_{k\in\N}$ has at least  one weak sequential cluster point lying in $ \cP_{\ransf\cQ} (\zersf\cA)$;

Step-3: The weak sequential  cluster point of  $\{x_\rsf^k\}_{k\in\N}$ is unique.

\vskip.2cm
Step-1: First, we define $x_\rsf^\star \in \Fixsf(\cP_{\ransf\cQ} \circ \cT)$. We also have  $x_\rsf^\star \in \cP_{\ransf\cQ} (\zersf\cA)$  by Lemma \ref{l_fix}, which implies that $\exists x^\star \in \zersf\cA$, such that $x_\rsf^\star = \cP_{\ransf\cQ} x^\star$.  According to Fact \ref{f_fne}, substituting $x=x_\rsf^k$  and $y=x_\rsf^\star$ into \eqref{fne_r} yields 
\be  \label{av}
 \big\| x_\rsf^{k+1} -  x_\rsf^\star \big\|_\cQ^2 
\le   \big\| x_\rsf^k - x_\rsf^\star \big\|_\cQ^2
-   \big \| x_\rsf^k - x_\rsf^{k+1}  \|_\cQ^2.
\ee
This guarantees that  $\{ \| x_\rsf^{k} - x_\rsf^\star \|_\cQ \}_{k\in\N} $ is non-increasing, and bounded from below (always being non-negative), and thus, convergent, i.e., $\lim_{k\rightarrow \infty}$  $\| x_\rsf^k - x_\rsf^\star \|_\cQ $ exists.  
 
\vskip.2cm
Step-2: \eqref{av} implies the boundedness of $\{ x_\rsf^k\}_{k\in\N}$. Indeed, from \eqref{av} and the closedness of $\ransf\cQ$, according to Remark \ref{rmk_closed},  there exists $\alpha>0$, such that
\[
\alpha \big\| x_\rsf^k  \big\| 
-\alpha \big\|  x_\rsf^\star \big\| 
\le \alpha \big\| x_\rsf^k - x_\rsf^\star \big\| 
\le \big\| x_\rsf^k - x_\rsf^\star \big\|_\cQ
\le   \big \| x_\rsf^0 - x_\rsf^\star  \|_\cQ,
\quad \forall k\in\N.
\]
Thus, $ \big\| x_\rsf^k  \big\| 
\le   \big\|  x_\rsf^\star \big\| 
+\frac{1}{\alpha} 
  \big \| x_\rsf^0 - x_\rsf^\star  \|_\cQ :=\rho$, $\forall k\in\N$, and $\{ x_\rsf^k\}_{k\in\N} \subseteq B(0,\rho)\cap \ransf\cQ$.
$B(0,\rho)$ is weakly sequentially compact (by \cite[Lemma 2.45]{plc_book}), and also is  $ B(0,\rho)\cap \ransf\cQ \subseteq B(0,\rho)$ (by \cite[Lemma 1.34]{plc_book}). Then, $\{ x_\rsf^k\}_{k\in\N} $ must possess at least one weak sequential cluster point $v^* \in B(0,\rho)\cap \ransf\cQ $,  i.e.,   there exists a subsequence $\{ x_\rsf^{k_i}\}_{k\in\N}$ that weakly converges to $v^*$, as $k_i\rightarrow \infty$. We here need to show that $v^* \in  \cP_{\ransf\cQ} (\zersf\cA)$, and more generally,  every weak sequential cluster point of  $\{ x_\rsf^k\}_{k\in\N}$ belongs to $ \cP_{\ransf\cQ} (\zersf\cA)$. To this end, summing up \eqref{av} from $k=0$ to $K-1$, and taking $K \rightarrow \infty$, we have
\[
\sum_{k=0}^{\infty} \big\| x_\rsf^k - x_\rsf^{k+1} \big\|_\cQ^2  
\le \big\| x_\rsf^{0} - x_\rsf^\star \big\|_\cQ^2 < +\infty.
\]
This implies that  $ x_\rsf^k-  x_\rsf^{k+1} = (\cI - \cP_{\ransf\cQ} \circ \cT) (x_\rsf^{k} ) 
\rightarrow  0$, and also $ (\cI - \cP_{\ransf\cQ} \circ \cT) (x_\rsf^{k_i} ) \rightarrow  0$ as $k_i \rightarrow \infty$. Combining with $x_\rsf^{k_i} \weak v^* \in \ransf\cQ$ and the $\cQ$-demiclosedness of $\cI- \cP_{\ransf\cQ} \circ \cT|_{\ransf\cQ}$ (shown in Lemma \ref{l_demi_T}), we obtain $ (\cI - \cP_{\ransf\cQ} \circ \cT|_{\ransf\cQ}) (v^*) = (\cI - \cP_{\ransf\cQ} \circ \cT) (v^*)= 0 \Longrightarrow v^* \in \Fixsf ( \cP_{\ransf\cQ} \circ \cT)$. Since $\{ x_\rsf^{k_i}\}_{k_i\in\N}$ is an arbitrary weakly convergent subsequence of $\{x_\rsf^k\}_{k\in\N}$, we conclude that every weak sequential cluster point of $\{x_\rsf^k\}_{k\in\N}$ lies in $\Fixsf ( \cP_{\ransf\cQ} \circ \cT) $, and furthermore, in  $\cP_{\ransf\cQ} (\zersf\cA)$ as well, again by Lemma \ref{l_fix}.


\vskip.2cm
Step-3: We need to show that   $\{x_\rsf^k\}_{k\in\N}$ cannot have two distinct weak sequential cluster point in $ \cP_{\ransf\cQ} (\zersf\cA)$. Indeed, let $v_1^*, v_2^{*} \in  \cP_{\ransf\cQ} (\zersf\cA)$ be two cluster points of  $\{  x_\rsf^k\}_{k\in\N}$. Since  $\lim_{k\rightarrow \infty} \| x_\rsf^{k} - x_\rsf^\star \|_\cQ $ exists as proved in Step-1, set $l_1 = \lim_{k\rightarrow \infty} \| x_\rsf^k - v_1^*\|_\cQ $, and $l_2 = \lim_{k\rightarrow \infty} \| x_\rsf^k - v_2^*\|_\cQ $. Take a subsequence $\{ x_\rsf^{k_i} \}_{k_i\in\N}$ weakly converging to $v_1^*$, as $k_i \rightarrow \infty$. From the identity:
\[
\big\| x_\rsf^{k_i}-v_1^*\big\|_\cQ^2 - \big\|x_\rsf^{k_i}-v_2^* \big\|_\cQ^2
=\big\|v_1^*- v_2^*\big\|_\cQ^2 +2 \big\langle 
v_1^*- v_2^* \big|  v_2^* - x_\rsf^{k_i} \big\rangle_\cQ,
\]
taking $k_i\rightarrow \infty$ on both sides, the last term becomes:
\[
 \lim_{k_i \rightarrow \infty} \big\langle 
v_1^*- v_2^* \big|  v_2^* - x_\rsf^{k_i} \big\rangle_\cQ
= \lim_{k_i \rightarrow \infty} \big\langle 
v_1^*- v_2^* \big|  v_2^* -  x_\rsf^{k_i} \big\rangle_\cQ
= - \|v_1^*- v_2^*\|_\cQ^2,
\]
then, we deduce that  $l_1^2 - l_2^2 =- \big\| v_1^*- v_2^*\big\|_\cQ^2$. Similarly,  take a subsequence $\{  x^{k_j} \}_{k_j \in\N}$ weakly converging to $ v_2^*$, as $k_j \rightarrow \infty$, which yields that $l_1^2 - l_2^2 = \big\|v_1^* - v_2^*\big\|_\cQ^2$.  Consequently, $\big\|v_1^*-v_2^*\big\|_\cQ = 0 \Longrightarrow  v_1^* = v_2^*$, since $v_1^*,v_2^* \in \ransf\cQ$. This shows the uniqueness of the weak sequential cluster point.

Finally, combining the above 3 step and \cite[Lemma 2.46]{plc_book}, we conclude the weak convergence of $\{x_\rsf^k\}_{k\in\N}$, and denote the weak limit as $x_\rsf^\star \in \cP_{\ransf\cQ} (\zersf\cA)$.
\end{proof}

\subsection{Discussions and related works}
\label{sec_diss}
A number of technical details in the proof of Theorem \ref{t_con_ran}  need to be clarified, discussed and further compared to other related works.

First, Lemma \ref{l_fix}  makes the connection between the degenerate PPA \eqref{T} and the original inclusion problem \eqref{p1}. One can always show the weak convergence of  $\{x_\rsf^k\}_{k\in\N}$ in $\Fixsf(\cP_{\ransf\cQ} \circ \cT)$ without Lemma \ref{l_fix}. This is Lemma \ref{l_fix} that tells us what the degenerate PPA \eqref{T} solves for is actually a point of $\zersf\cA$ projected onto the range space of degenerate preconditioner. 

Assumption \ref{assume_A}, which is stronger than Assumption \ref{assume_max}, is not needed in Theorem \ref{t_con_ran}, and thus, it is not necessary to assume  $\cA:\cH\mapsto 2^\cH$ to be (globally) maximally monotone here. Moreover, Theorem \ref{t_single} is also not needed. In other words, $\cT$ is allowed to be multi-valued, since we are only concerned with $\{x_\rsf^k\}_{k\in\N}$ here, which is already well-defined by Assumptions \ref{assume_Q}, \ref{assume_max} and Lemma \ref{l_single}. This is fundamentally different from \cite{latafat_2017,bredies_2017,bredies_ppa}, where $\cT$ is required to be well-defined. Theorem \ref{t_con_ran} essentially proves the weak convergence of the well-defined iteration $x_\rsf^{k+1} = (\cP_{\ransf\cQ} \circ \cT) (x_\rsf^k)$.

\cite[Theorem 3.4]{latafat_2017} discussed the convergence of $\{ x_\rsf^k\}_{k\in\N}$ in the finite-dimensional setting, where the proof heavily relied on the assumption of continuity of $\cT$. We here argue that  the continuity of $\cP_{\ransf\cQ} \circ \cT|_{\ransf\cQ}:\ransf\cQ \mapsto 2^{\ransf\cQ}$ would suffice for \cite[Theorem 3.4]{latafat_2017}, and this is obviously true in finite-dimensional case by Fact \ref{f_fne}.  However, $\cP_{\ransf\cQ} \circ \cT|_{\ransf\cQ}$ is not weakly continuous generally in infinite-dimensional case. Instead, we exploited the weak-to-strong continuity of $\cI - \cP_{\ransf\cQ} \circ \cT|_{\ransf\cQ}$ in Lemma \ref{l_demi_T}.

The Opial's lemma (see \cite[Lemma 2.1]{attouch_2001} for example) or \cite[Lemma 2.47]{plc_book} can also be used in the proof instead of \cite[Lemma 2.46]{plc_book}, without Step-3.
There are also other ways to finish the proof. For instance, notice that the sequence $\{x_\rsf^k\}_{k\in\N}$  we are concerned with here lies in $\ransf\cQ$ only. If we treat $\ransf\cQ$ as the ambient space equipped with $\|\cdot\|_\cQ$ (strong topology) and $\langle \cdot |\cdot \rangle_\cQ$ (weak topology)\footnote{$(\ransf\cQ, \|\cdot\|_\cQ)$ and   $(\ransf\cQ, \langle \cdot |\cdot \rangle_\cQ)$ are Hausdorff spaces, and however, $(\cH, \|\cdot\|_\cQ)$ and   $(\cH, \langle \cdot |\cdot \rangle_\cQ)$ are not.}, the weak convergence of   $\{x_\rsf^k\}_{k\in\N}$ in $\cP_{\ransf\cQ}(\zersf\cA)$ can be obtained following a standard analysis of \cite{attouch_2001,ppa_guler,plc}, based on the nice properties of $\cP_{\ransf\cQ}\circ \cT|_{\ransf \cQ}$ in Sect. \ref{sec_local}.  More broadly,  many  concepts, e.g., weak (sequential) compactness, can be slightly modified to the metric space $(\ransf\cQ, \|\cdot\|_\cQ)$ (strong topology) and  $(\ransf\cQ, \langle \cdot| \cdot\rangle_\cQ)$ (weak topology). For example, one can define a set $C\subset\ransf\cQ$ as  {\it weakly sequentially compact w.r.t. $\langle\cdot| \cdot\rangle_\cQ$}, if  every sequence in  $C$ has a weak sequential cluster point in $C$ in a sense of $\langle\cdot| \cdot\rangle_\cQ$. In other words, for every sequece $x_\rsf^k \subset C$, there is a subsequence $\{x_\rsf^{k_i}\}_{k_i\in\N}$, such that $x_\rsf^{k_i} \weak x_\rsf^\star \in C$ in a sense of $\langle\cdot| \cdot\rangle_\cQ$, i.e., $\langle x_\rsf^{k_i} | u \rangle_\cQ \rightarrow 
\langle x_\rsf^\star  | u \rangle_\cQ $, $\forall u\in\ransf\cQ$. In this way, it is also easy to obtain the weak convergence of $\{x_\rsf^{k}\}_{k \in\N}$.

Our proof uses the $\cQ$-demiclosedness of $\cI - \cP_{\ransf\cQ} \circ \cT|_{\ransf\cQ}$. An alternative way is to use the maximal monotonicity of  $(\sqrt{\cQ}\cA^{-1}\sqrt{\cQ}) \big|_{\ransf\cQ}$ to finish the proof as well, see Appendix \ref{app_1}.

Finally, we stress that  $\{x^k\}_{k\in\N}$ may be unbounded due to the uncontrolled component of $x_\ksf^k := \cP_{\kersf\cQ} x^k$. Thus, $\{x^k\}_{k\in\N}$ may not even possess any weak sequential cluster point, and there is no conclusion about the convergence of   $\{x^k\}_{k\in\N}$ for the moment.  The only exception occurs in the non-degenerate case, where  the weak convergence of $\{x_\rsf^k\}_{k\in\N}$ is obviously equivalent to  that of $\{x^k\}_{k\in\N}$. 
Thus, Theorem \ref{t_con_ran} also covers the standard non-degenerate setting.

\subsection{Convergence in the whole space}
To proceed further, if $\cT$ is  well-defined, it is then natural to ask when the iterations \eqref{T} converge in the whole space. At first sight, the boundedness of $\{x^k\}_{k\in\N}$ should be a basic requirement. To this end, \cite[Theorem 2.9]{bredies_ppa} assumed the Lipschitz continuity of $(\cA+\cQ)^{-1}$. Here, we propose the following equivalent conditions, which further weakens \cite[Theorem 2.9]{bredies_ppa} to the Lipschitz continuity of $(\cA+\cQ)^{-1}|_{\ransf\cQ}$.

\begin{assumption} \label{assume_lip}
{\rm [Lipschitz continuity]}

{\rm (i)}  $(\cA+\cQ)^{-1}|_{\ransf\cQ}$ is Lipschitz continuous; 

{\rm (ii)}  $\cP_{\kersf\cQ} \circ \cT|_{\ransf\cQ}$ is Lipschitz continuous; 

{\rm (iii)} There exists a constant $\xi>0$, such that $\| \cT x_1-\cT x_2\| \le \xi\| x_1- x_2\|_\cQ$, $\forall x_1,x_2\in \cH$.
\end{assumption}

Assumption \ref{assume_lip}-(iii) can also be found in \cite[Theorem 3.3]{fxue_rima} and \cite[Lemma 3.2]{bredies_2017}.
Note that the prerequisite of Assumption \ref{assume_lip} is that $\cT$ should be well-defined, and thus, it can be used only when Assumptions \ref{assume_Q}, \ref{assume_local}, \ref{assume_max} and \ref{assume_single}-(i) (or (ii), (iii)) are fulfilled. The following proposition states that the three items of Assumption \ref{assume_lip} are equivalent.
\begin{proposition} \label{p_lip}
Given the well-defined $\cT$ as \eqref{T_single}, the three conditions of Assumption \ref{assume_lip} are equivalent.
\end{proposition}

The  proof is postponed in Appendix \ref{app_2}. Then, the convergence in the whole space immediately follows.
\begin{theorem} \label{t_con}
Let $\{x^k\}_{k\in\N}$ be a sequence  generated by \eqref{T}, which satisfies Assumptions \ref{assume_Q}, \ref{assume_local}, \ref{assume_max}, \ref{assume_zero} and any condition of Assumptions \ref{assume_single} and \ref{assume_lip}. Furthermore, if any of the following conditions holds:

{\rm (i)} every weak sequential cluster point of $\{x^k\}_{k\in\N}$ lies in $\Fixsf\cT$; 

{\rm (ii)} Assumption \ref{assume_A};

{\rm (iii)}  $\cI - \cT$ is demiclosed,

\noindent 
then $\{x^k\}_{k\in\N}$ weakly converges to some $x^\star \in \zersf\cA$, as $k\rightarrow \infty$.
\end{theorem}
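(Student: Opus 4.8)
### Proof proposal for Theorem \ref{t_con}

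The plan is to combine the weak convergence in the range space (Theorem \ref{t_con_ran}) with the Lipschitz-type control from Proposition \ref{p_lip} to first establish that $\{x^k\}_{k\in\N}$ is bounded, and then use a Browder-type demiclosedness argument together with Opial's lemma to upgrade to full weak convergence. First I would invoke Theorem \ref{t_con_ran}, which under Assumptions \ref{assume_Q}, \ref{assume_max} and \ref{assume_sol} gives $x_\rsf^k \weak x_\rsf^\star$ for some $x^\star\in\zersf\cA$, and in particular the boundedness of $\{x_\rsf^k\}_{k\in\N}$. Next I would use the fact (stated right after Proposition \ref{p_lip}) that all the conditions there are equivalent to $\|\cT x_1 - \cT x_2\| \le \xi \|x_1 - x_2\|_\cQ$ for all admissible inputs. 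Applying this with $x_1 = x^k$ and $x_2 = x^\star$, and using $x^{k+1} = \cT x^k$ together with $\cT x^\star \ni x^\star$ (since $x^\star\in\zersf\cA$ implies $x^\star\in(\cA+\cQ)^{-1}\cQ x^\star$, and here $\cT$ is single-valued), one gets $\|x^{k+1} - x^\star\| \le \xi\|x^k - x^\star\|_\cQ = \xi\|x_\rsf^k - x_\rsf^\star\|_\cQ$, which is bounded by Step-1 of the proof of Theorem \ref{t_con_ran}. Hence $\{x^{k}\}_{k\in\N}$ is bounded and therefore possesses weak sequential cluster points.

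The second stage is to identify those cluster points. Under hypothesis (i) this is assumed directly: every weak sequential cluster point lies in $\Fixsf\cT$. Under hypothesis (iii), suppose $x^{k_i}\weak \bar x$; since $\|x_\rsf^k - x_\rsf^{k+1}\|_\cQ \to 0$ was shown in Step-2 of the proof of Theorem \ref{t_con_ran}, and since the displayed summability estimate there combined with the Lipschitz bound $\|x^k - x^{k+1}\| = \|\cT x^{k-1} - \cT x^k\| \le \xi\|x^{k-1}_\rsf - x^k_\rsf\|_\cQ$ gives $\|x^k - x^{k+1}\| \to 0$, i.e.\ $(\cI-\cT)x^{k_i}\to 0$ strongly, the demiclosedness of $\cI-\cT$ yields $(\cI-\cT)\bar x = 0$, so $\bar x\in\Fixsf\cT$; thus (iii)$\Rightarrow$(i). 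Under hypothesis (ii), Corollary \ref{c_full} shows $\cA$ is maximally monotone, which should allow us to run the classical demiclosedness argument for resolvents: from $\cQ(x^{k_i} - x^{k_i+1})\in\cA(x^{k_i+1})$, $x^{k_i+1}\weak\bar x$ (using $x^{k_i}\weak\bar x$ and $x^k - x^{k+1}\to 0$), and $\cQ(x^{k_i}-x^{k_i+1})\to 0$ strongly (since $\|x^k - x^{k+1}\|\to 0$), the weak-strong closedness of the graph of the maximally monotone operator $\cA$ gives $0\in\cA\bar x$, hence $\bar x\in\zersf\cA\subseteq\Fixsf\cT$; thus (ii)$\Rightarrow$(i) as well. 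In every case we reduce to: $\{x^k\}$ is bounded and every weak sequential cluster point lies in $\Fixsf\cT\subseteq\zersf\cA$.

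The final stage is an Opial/Fej\'er argument in the full space. For any $x^\star\in\zersf\cA$ the Lipschitz estimate above does not by itself give Fej\'er monotonicity of $\{\|x^k - x^\star\|\}$ in the ordinary norm; instead I would combine $\cQ$-firm nonexpansiveness (Fact \ref{f_fne}) — which gives $\|x_\rsf^{k+1} - x_\rsf^\star\|_\cQ \le \|x_\rsf^k - x_\rsf^\star\|_\cQ$ — with the identity $x^{k+1} - x^\star = \cT x^k - \cT x^\star$ and the Lipschitz bound to control the kernel component. More precisely, decomposing $x^k - x^\star = (x_\rsf^k - x_\rsf^\star) + (x_\ksf^k - x_\ksf^\star)$, the range part converges weakly and its $\cQ$-norm is non-increasing by Theorem \ref{t_con_ran}; for the kernel part one uses that $x_\ksf^{k+1}$ is (via the single-valued selection in Theorem \ref{t_single}) a Lipschitz function of $x_\rsf^k$, so that $x_\ksf^{k+1} - x_\ksf^\star$ is controlled by $\|x_\rsf^k - x_\rsf^\star\|_\cQ$ which converges; combining, $\{x^k\}$ converges weakly iff it has a unique weak cluster point, and uniqueness follows because any two cluster points $\bar x_1,\bar x_2\in\zersf\cA$ have equal range parts (by Theorem \ref{t_con_ran}) and the kernel parts are pinned down by passing to the limit in the continuous single-valued selection $x_\ksf^{k+1} = h(x_\rsf^k)$ along the respective subsequences, where $h$ is weak-to-weak continuous by the Lipschitz bound. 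Then \cite[Lemma 2.46]{plc_book} concludes $x^k\weak x^\star$ with $x^\star\in\zersf\cA$.

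The main obstacle I anticipate is the very last uniqueness step: showing the kernel components $x_\ksf^k$ converge to a single limit. The range components are handled cleanly by Theorem \ref{t_con_ran}, but $\{x_\ksf^k\}$ is only known to be bounded (via the Lipschitz bound), not Fej\'er monotone in any obvious inner product, so the standard two-cluster-point argument does not apply verbatim. The resolution should be that $x_\ksf^{k+1}$ depends on $x_\rsf^k$ through the single-valued, Lipschitz (hence weak-to-weak continuous) map furnished by Proposition \ref{p_lip} and Theorem \ref{t_single}, so that $x_\rsf^k\weak x_\rsf^\star$ forces $x_\ksf^{k+1}\weak x_\ksf^{\star\star}$ for a uniquely determined $x_\ksf^{\star\star}$ — but making the passage to the limit rigorous (weak continuity of the selection, and checking the limit point indeed solves the inclusion and so lies in $\zersf\cA$) is the delicate part, and is presumably where hypotheses (i)--(iii) do their real work.
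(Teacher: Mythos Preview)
Your first two stages are essentially the paper's argument: boundedness via the Lipschitz bound from Proposition~\ref{p_lip}, then reducing (ii) and (iii) to (i) by the weak--strong closedness of $\grasf\cA$ (for (ii), note that maximal monotonicity of $\cA$ is \emph{assumed} directly in Assumption~\ref{assume_A}-(i), not derived from Corollary~\ref{c_full}) and by demiclosedness of $\cI-\cT$ (for (iii)), using $\|x^k-x^{k+1}\|\to 0$.

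The gap is in your final stage. The claim ``$h$ is weak-to-weak continuous by the Lipschitz bound'' is false in infinite dimensions: a nonlinear Lipschitz map need not be weakly sequentially continuous (the proximity operator itself is the standard counterexample, as the paper notes after Proposition~\ref{p_drs}). So your proposed mechanism for pinning down the kernel part does not work, and you correctly flag this as the delicate point.

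The paper bypasses this entirely with a one-line observation you are missing. Once you know every weak cluster point lies in $\Fixsf\cT$ (your stage~2), uniqueness is immediate from Fact~\ref{f_T}-(ii), namely $\cT = \cT\circ\cP_{\ransf\cQ}$. Indeed, if $v_1^*,v_2^*$ are two cluster points, Theorem~\ref{t_con_ran} gives $\cP_{\ransf\cQ}v_1^*=\cP_{\ransf\cQ}v_2^*$, hence $\cT v_1^*=\cT v_2^*$; but $v_i^*\in\Fixsf\cT$ means $v_i^*=\cT v_i^*$, so $v_1^*=v_2^*$. No continuity of any kernel selection is needed --- the point is that $\cT$ ignores the kernel component of its input, so two fixed points with the same range projection are automatically equal. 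Then \cite[Lemma 2.46]{plc_book} finishes.
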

\begin{proof}
Assumption \ref{assume_lip} guarantees the boundedness of  $\{x^k\}_{k\in\N}$. Then there exist two subsequences $\{x^{k_i}\}_{k\in\N}$ and $\{x^{k_j}\}_{k\in\N}$, such that $x^{k_i} \weak v_1^*$ and $x^{k_j} \weak v_2^*$. By Step-3 of the proof of Theorem \ref{t_con_ran}, we have $\cP_{\ransf\cQ} v_1^*
 =\cP_{\ransf\cQ} v_2^*$ and further $\cT v_1^* = \cT v_2^* $, i.e., all the weak sequential cluster points of  $\{x^k\}_{k\in\N}$ share the same projection onto $\ransf\cQ$.
 
Condition (i): it implies that  $v_1^* = \cT v_1^* = \cT v_2^* = v_2^*$. This shows that the weak sequential cluster point of  $\{x^k\}_{k\in\N}$ is unique, and lies in $\Fixsf\cT=\zersf\cA$. The weak convergence of  $\{x^k\}_{k\in\N}$  is obtained by \cite[Lemma 2.46]{plc_book}.
 
Condition (ii): Assumption \ref{assume_A}-(i) implies that $\grasf\cA$ is closed in $\cH^\text{weak} \times \cH^\text{strong} $. Take a subsequence $\{x^{k_i}\}_{k\in\N}$, such that $x^{k_i} \weak v^*$. Then $\cT x^{k_i} \weak v^*$, since $x^{k_i} -\cT x^{k_i} \rightarrow 0$. Considering
$\cA \cT x^{k_i} \owns \cQ (x^{k_i} -\cT x^{k_i} )$, combining with $\cT x^{k_i} \weak v^*$ and $x^{k_i} -\cT x^{k_i} \rightarrow 0$, it then follows from the closedness of $\grasf\cA$ that $v^\star \in \zersf\cA=\Fixsf\cT$, which leads to  Condition (i).

Condition (iii): Considering the subsequence $\{x^{k_i}\}_{k\in\N}$ in (ii), the demiclosedness of $\cI-\cT$ yields that $(\cI-\cT)v^* = 0$, i.e., $v^*\in\Fixsf\cT$, which also  leads to Condition (i).
\end{proof}

\vskip.2cm 
In Theorem \ref{t_con}, Conditions (i) and (ii) correspond to \cite[Theorem 2.9, Corollary 2.10]{bredies_ppa}, respectively. Condition (iii), as mentioned in \cite[Remark 2.11]{bredies_ppa}, is hard to control and examine. The key problem here is that $\cT$ is not necessarily nonexpansive, and thus the general demiclosedness of $\cI-\cT$ cannot be obtained by Browder's principle \cite[Theorem 4.27]{plc_book}. However, it can be tackled case-by-case, by exploiting particular structures of $\cA$ and $\cQ$, and additional assumptions, see, for instance, \cite[Lemma 3.4]{bredies_2017}. 

\section{Reduced form of  degenerate PPM} \label{sec_reduced}
\cite[Theorems 2.13 and 2.14]{bredies_ppa} proposed a reduced form of degenerate PPM, and analyzed its convergence properties. We shall now proceed with a concise recapitulation, further develop some results along this line and finally make connections to our presented results. 

This reduction approach is based on the following properties $\cQ$ that is parallel to Fact \ref{f_Q}.
\begin{fact} \label{f_Q_2}
Under Assumption \ref{assume_Q}, the following hold.

{\rm (i) \cite[Proposition 2.3]{bredies_ppa}}
There exists a bounded and surjective operator $\cC: \cH \mapsto \cH'$ for some real Hilbert space $\cH'$, such that 
$\cQ = \cC^\top \cC$. Moreover, $\cC^\top: \cH'\mapsto\cH$ is injective in $\cH'$.

{\rm (ii) \cite[Fact 2.25]{plc_book}} $\ransf \cQ =\ransf \cC^\top$, $\kersf \cQ=\kersf \cC$.

{\rm (iii) \cite[Proposition 3.30]{plc_book}} $\cP_{\ransf \cQ} = \cC^\top (\cC\cC^\top)^{-1} \cC$, $\cP_{\kersf \cQ} = \cI- \cC^\top (\cC\cC^\top)^{-1} \cC$.

{\rm (iv)} $\cP_{\ransf\cQ} = \tilde{\cC}^\top \tilde{\cC}$, where $\tilde{\cC} =(\cC\cC^\top)^{-\frac{1}{2}}\cC$ is a normalized version of $\cC$. 

{\rm (v)} $\cP_{\ransf \cQ} = \cC^\dagger \cC$, $\cP_{\kersf \cQ} = \cI-   \cC^\dagger \cC$, where $\cC^\dagger = \cC^\top (\cC\cC^\top)^{-1} $ denotes the pseudo-inverse of $\cC$, and $\ransf \cC^\top = \ransf \cC^\dagger$.
\end{fact}

It was shown in \cite{fxue_drs,fxue_rima,bredies_ppa} that the scheme \eqref{T} can be reduced to a `smaller' space---$\cH'$, by removing the redundancy caused by the degeneracy of $\cQ$. The existing results are summarized below.
\begin{proposition} \label{p_reduced}
Under Assumptions \ref{assume_Q} and \ref{assume_local} , the following hold.

{\rm (i) \cite[Eq.(8)]{fxue_drs}} Denoting  $c^k = \cC x^k \in \cH'$,  \eqref{T} can be reduced to
\be \label{T_another}
c^{k+1} = \cTtilde c^k, \quad \textrm{where\ } \cTtilde := 
\cC \big( \cA+\cC^\top \cC \big)^{-1} \cC^\top.
\ee

{\rm (ii) \cite[Theorem 2.13]{bredies_ppa} and \cite[Theorem 2.4]{fxue_drs}} $\cTtilde$ can further be simplified as a resolvent:
\be \label{T_3}
\cTtilde = \big( \cI + (\cC \circ \cA^{-1} \circ \cC^\top)^{-1} \big)^{-1} 
:= J_{\cC \triangleright \cA}.
\ee
\end{proposition}

Following similar procedures in previous sections, we present several further results regarding $\cTtilde:\cH' \mapsto\cH'$.
\begin{proposition} \label{p_T_another}
Given \eqref{T_another} and \eqref{T_3} under Assumptions \ref{assume_Q} and \ref{assume_local}, the following hold.

{\rm (i)} $\cTtilde = J_{\cC \triangleright \cA} = \big( \cI + (\cC \circ \cA^{-1} \circ \cC^\top)^{-1} \big)^{-1} $ is single-valued.

{\rm (ii)} $\cI =\big( \cI + (\cC \circ \cA^{-1} \circ \cC^\top)^{-1} \big)^{-1} + \big( \cI + \cC \circ \cA^{-1} \circ \cC^\top \big)^{-1}   $.

{\rm (iii)} $\domsf \cTtilde = \ransf \big(\cI+  \cC \circ \cA^{-1} \circ \cC^\top \big) $. 

{\rm (iv)} $\domsf\cTtilde = \cH'$ and thus, $\cTtilde$ is well-defined, 

\hskip.4cm {\rm (1)} if and only if  $\cC\infimal \cA: \cH' \mapsto 2^{\cH'}$ is maximally monotone;

\hskip.4cm {\rm (2)} if  $\cA: \cH \mapsto 2^{\cH}$ is maximally monotone and $0\in \sri (\ransf\cA -\ransf \cC^\top)$.

{\rm (v)} Assuming (iv-1) or (iv-2) is satisfied,  if $\zersf\cA \ne\varnothing$, $c^k \weak c^\star \in \cC (\zersf\cA)$.
\end{proposition}
\begin{proof}
(i)--(iv) similar to Proposition \ref{p_equality}.

(v) in view of Theorem \ref{t_full} and Corollary \ref{c_full}.

(vi) According to Lemma \ref{l_fix} and Theorem \ref{t_con_ran}, observing \eqref{T_3}, we obtain that the weak limit $c^\star \in \Fixsf \cTtilde =  \zersf  (\cC \circ \cA^{-1} \circ \cC^\top)^{-1} = (\cC \circ \cA^{-1} \circ \cC^\top) (0) =\cC \circ (\cA^{-1}  (0) ) = \cC (\zersf \cA) $.
\end{proof}

This reduction approach has connections to our presented results, as shown below.
\begin{proposition} \label{p_link}
Regarding $\cT$ \eqref{T_single} and $\cTtilde$ \eqref{T_another} or \eqref{T_3}, the following hold.

{\rm (i)} $\domsf \cTtilde = \cC (\domsf \cT)$.

{\rm (ii)} $\domsf \cTtilde = \cH' \Longleftrightarrow 
\domsf \cT = \cH$.

{\rm (iii)} $\ransf \big(\cI+  \cC \circ \cA^{-1} \circ \cC^\top \big) 
= \cC  (\sqrt{\cQ})^\dagger \ransf \big(\cI+   \sqrt{\cQ} \cA^{-1} \sqrt{\cQ}  \big) $. 

{\rm (iv)} $\cP_{\ransf\cQ} \circ \cA^{-1}|_{\ransf\cQ}$ is maximally monotone, if and only if   $\cC  \circ \cA^{-1}  \circ \cC^\top$ is maximally monotone.

{\rm (v)} Both weak limits are linked via $x_\rsf^\star\in \cP_{\ransf\cQ} (\zersf \cA) \Longleftrightarrow c^\star = \cC (\zersf\cA)$.
\end{proposition}
\begin{proof}
(i) Observing \eqref{T_another}, we develop that 
$\cC (\domsf \cT) = \cC  \{x \in\cH: \cQ x\in \ransf(\cA+\cQ) \}
= \cC  \{x \in\cH: \cC^\top \cC x\in \ransf(\cA+\cQ) \}
=  \{c \in\cH': \cC^\top c \in \ransf(\cA+\cQ) \}
= \domsf \cTtilde$.

(ii) Combining (i) with Proposition \ref{p_equality}-(v), we have $\domsf \cTtilde = \ransf\cC = \cH' \Longleftrightarrow 
\domsf \cT \supseteq \ransf \cC^\top = \ransf\cQ
\Longleftrightarrow \domsf\cT = \domsf\cT +\kersf\cQ
\supseteq \ransf\cQ +\kersf\cQ = \cH 
\Longleftrightarrow \domsf \cT = \cH$.

(iii) Combine (i), Proposition \ref{p_T_another}-(iv), Proposition \ref{p_equality}-(viii) and note that $\cC (\kersf\cQ)
= \cC (\kersf\cC) =\{0\} $.

(iv) First, if $\cP_{\ransf\cQ}\cA^{-1}|_{\ransf\cQ}$ is maximally monotone, then for any given $(u,x)\in \ransf\cQ \times \ransf\cQ$, we have: 
\[
(u,x)\in \grasf (\cP_{\ransf\cQ} \cA^{-1}|_{\ransf\cQ}) \Longleftrightarrow 
\forall (v,y)\in \grasf (\cP_{\ransf\cQ} \cA^{-1}|_{\ransf\cQ}), \ 
\langle u-v|x-y \rangle \ge 0.
\]
Since $u,x,v,y \in \ransf\cQ = \ransf \cC^\top$, let $u=\cC^\top u'$ and $v=\cC^\top  v'$ for $u',v'\in \cH'$. Then, $x\in \cP_{\ransf\cQ}\cA^{-1}|_{\ransf\cQ} (u)=
 \cP_{\ransf\cQ}\cA^{-1} u=\cP_{\ransf\cQ}\cA^{-1} \cC^\top  u'$, and similarly, $y\in \cP_{\ransf\cQ}\cA^{-1}\cC^\top v'$. Then, we obtain
\[
(u',x)\in \grasf \big(\cP_{\ransf\cQ} \cA^{-1} \cC^\top  \big) \Longleftrightarrow 
\forall (v',y)\in \grasf\big(\cP_{\ransf\cQ} \cA^{-1} \cC^\top \big), \ 
\langle \cC^\top (u'-v')|x-y \rangle \ge 0.
\]
This is also $\langle u'-v' | \cC x - \cC y \rangle \ge 0$. Let $x'=\cC x$ and $y'=\cC y$ (since $\cC$ is surjective in $\cH'$), then it becomes
\[
(u',x')\in \grasf \big(\cC \circ  \cA^{-1} \circ \cC^\top \big)
\Longleftrightarrow
\forall (v',y')\in \grasf \big(\cC \circ  \cA^{-1} \circ \cC^\top \big), \ 
\langle u'-v' | x'-y' \rangle \ge 0.
\]
This shows that  $\cC \circ  \cA^{-1} \circ \cC^\top $ is maximally monotone. The converse statement can be proved similarly.

(v)  $x_\rsf^\star\in \cP_{\ransf\cQ} (\zersf \cA) \Longleftrightarrow c^\star = \cC x_\rsf^\star\in \cC\cP_{\ransf\cQ} (\zersf \cA) =  \cC\cP_{\ransf\cC^\top} (\zersf \cA) 
=  \cC  (\zersf \cA) $.
\end{proof}

We have a few further remarks by comparing our restricted monotonicity in $\cH$ with this dimensionality reduction to $\cH'$.

(1) The `smaller' space $\cH'$ is essentially isomorphic to $\ransf\cQ$---a proper subspace embedded in $\cH$.

(2) As discussed in Sect. \ref{sec_max}, it is problematic to define maximality of monotonicity of $\cA^{-1}|_{\ransf\cQ}$, due to the invalid inner product in $\cH$. That is why we propose  $\cP_{\ransf\cQ} \circ \cA^{-1}|_{\ransf\cQ}$, from which the maximality and restricted Minty's theorem can be properly applied. By contrast, there is no difficulty to define maximal monotonicity of  $\cC \circ  \cA^{-1} \circ \cC^\top $, since both the domain and range of  $\cC \circ  \cA^{-1} \circ \cC^\top $ have been restricted in a `smaller' space $\cH'$, where the inner product performs properly.

(3) When using  $\cP_{\ransf\cQ} \circ \cA^{-1}|_{\ransf\cQ} $, the underlying space remains $\cH$. We still have access to analyzing the iterative behaviours of \eqref{T} in $\kersf \cQ$. If we perform the dimensional reduction, it would be hopeless to retrieve any information of the kernel part from the reduced form \eqref{T_another}, and it is infeasible to investigate the single-valuedness and whole convergence in $\cH$.

\section{Applications to operator splitting algorithms} \label{sec_app}
The standard convex settings of ALM  and DRS have been studied under the degenerate PPM framework in \cite{fxue_drs,bredies_ppa,latafat_2017}. This section is devoted to  minimization of some non-convex functions, by showing that the convergence of ALM and DRS in range space of preconditioner still holds under restricted maximal monotonicity.  Since we are going to consider non-convex function, we will use  $\partial f$ to denote the limiting subdifferential (i.e., a set of limiting derivatives)  \cite[Definition 8.3-(b)]{rtr_book} of the non-convex function $f$ from now on.

\subsection{Augmented Lagrangian method (ALM)}
Let us now consider a constrained optimization problem:
\be \label{problem_alm}
\min_{u\in\cU} f(u),\qquad \text{s.t.\ } Bu = 0, 
\ee
where $B:\cU\mapsto \cY$ is a linear bounded operator and $f:\cU \mapsto \R\mcup \{+\infty\}$ is a proper function. The  ALM   reads as (see \cite[Eq.(7.2)]{taomin_2018} for example):
\be \label{alm_1}
 \left\lfloor \begin{array}{lll}
u^{k+1}   & \in & \Arg \min_u  f(u)  + \frac{\tau}{2}
\big\| Bu  -\frac{1}{\tau} p^{k} \big\|^2  ,\\
p^{k+1}  & =  & p^k - \tau  Bu^{k+1}.
\end{array} \right. 
\ee
Substituting $q^k:=\frac{1}{\tau} p^k$ into \eqref{alm_1}, the optimality condition of \eqref{alm_1} becomes:
\[
 \left\lfloor \begin{array}{lll}
0  & \in & \partial (\frac{1}{\tau} f) (u^{k+1} ) +B^\top 
(Bu^{k+1}  -q^{k} ) ,\\
0  & =  & q^{k+1}  - q^k +   Bu^{k+1},
\end{array} \right. 
\]
which fits \eqref{T} as
\be \label{alm_ppa}
\begin{bmatrix}
 0 \\ 0   \end{bmatrix} \in 
 \underbrace{  \begin{bmatrix}
\partial (\frac{1}{\tau} f)   &  -B^\top  \\ B &  0
    \end{bmatrix} }_\cA   \begin{bmatrix}
u^{k+1}  \\ q^{k+1}     \end{bmatrix}     
 + \underbrace{  \begin{bmatrix}
 0  &  0   \\    0  &   I   \end{bmatrix}  }_\cQ
 \begin{bmatrix}
 u^{k+1} - u^{k}  \\  q^{k+1} - q^{k}     \end{bmatrix}.
\ee

The properties of ALM in convex setting have been well understood. 
We now focus on the behaviours of ALM in  non-convex setting. 
 Note that in this case, the sequence $(u^k,q^k)$ generated by \eqref{alm_ppa} does not guarantee the optimality of $u^k$ in the $u$-step of  \eqref{alm_1}.   The next Corollary \ref{c_alm} presents step-by-step results, some of which have not been explored in \cite{fxue_coam,fxue_drs}.
\begin{corollary} \label{c_alm}
Given the ALM \eqref{alm_ppa}, the following hold.

{\rm (i)} The reduced form of \eqref{alm_ppa} is the iteration of $q^k\in\cY$ only: 
$q^{k+1} = \big( \cI+ (\cC \circ \cA^{-1} \circ \cC^\top)^{-1} \big)^{-1}
q^k$,  where $\cC = \begin{bmatrix}
 0 & I \end{bmatrix}$.
 
 {\rm (ii)} $\big( \cC \circ \cA^{-1} \circ \cC^\top \big)^{-1}
=  B\circ \big( \partial (\frac{1}{\tau} f) \big)^{-1} \circ  B^\top$.

{\rm (iii)} The reduced ALM can be rewritten as
\be \label{alm_reduced}
q^{k+1} = \Big( \cI+  B\circ \big( \partial (\frac{1}{\tau} f) \big)^{-1} \circ  B^\top  \Big)^{-1}
q^k.
\ee

 {\rm (iv)} \eqref{alm_reduced} is well-defined, if $B\circ \big( \partial (\frac{1}{\tau} f) \big)^{-1} \circ  B^\top: \cY \mapsto 2^\cY$ is maximally monotone.
\end{corollary}
\begin{proof}
(i) Proposition \ref{p_reduced}-(ii).

(ii) $\cQ$ can be decomposed as  $\cQ = \cC^\top \cC$ with $\cC = \begin{bmatrix} 0 &   I \end{bmatrix}$. Consider $b\in (\cC \circ \cA^{-1} \circ \cC^\top)^{-1} (a) \Longleftrightarrow a\in  (\cC \circ \cA^{-1} \circ \cC^\top) (b)
= \cA^{-1} (0,b) $.
Letting $ \begin{bmatrix}
c \\ d \end{bmatrix} \in   \cA^{-1} \begin{bmatrix} 0 \\   b \end{bmatrix} $, we then have $a=  d$ and
\[
\begin{bmatrix}
0 \\   b \end{bmatrix}
\in   \cA   \begin{bmatrix}
c \\ a \end{bmatrix}  =  \begin{bmatrix}
 \partial (\frac{1}{\tau} f)  &  -B^\top  \\ B &  0
    \end{bmatrix}   \begin{bmatrix}
c \\ a \end{bmatrix}  =\begin{bmatrix}
\partial (\frac{1}{\tau} f)  (c) -B^\top a \\ Bc \end{bmatrix}.
\]
This yields that $b=  Bc$ and $  B^\top a  \in \partial (\frac{1}{\tau} f) (c)$, i.e.,  $c \in \big( \partial (\frac{1}{\tau} f) \big)^{-1} ( B^\top a)$. Finally, we obtain $b \in \big( B\circ \big( \partial (\frac{1}{\tau} f) \big)^{-1} \circ  B^\top \big)  ( a) $, which completes the proof.

(iii) Combine (i) and (ii).

(iv) Theorem \ref{t_full}  or Proposition \ref{p_T_another}-(iv).
\end{proof}

Here we do not use conjugate of $f$ to express $\big( \partial (\frac{1}{\tau} f) \big)^{-1} $, because $f$ is not assumed to be convex, and this is also no longer a standard subdifferential of convex function.

Example \ref{eg_2} shows the ALM may fail to converge, even if $f$ is convex. This corresponds to the problem of \eqref{problem_alm}, with $f(u_1,u_2) = \max\{ e^{u_2}-u_1, 0\}$ and $B=\begin{bmatrix} 1 & 0 \end{bmatrix}$. This has no solution by simple inspection. This can also be implied by the reduced ALM, which is given as $q^{k+1} = (\cI+\cK)^{-1} q^k$, where $\cK = B\circ \big( \partial (\frac{1}{\tau} f) \big)^{-1} \circ  B^\top$. However, this is not a valid iterate, since $\domsf \cK = \{0\}$, and $\ransf(\cI+\cK) = (0,+\infty)$.

Now, we consider an example of the problem \eqref{problem_alm} with $B=\begin{bmatrix}
1 & 1& 1\end{bmatrix}$ and
\be \label{alm_eg_1}
f:\R^3\mapsto \R: x = (x_1,x_2,x_3) \mapsto
\frac{1}{2}(x_1+x_2+x_3)^2 
+(x_1-x_2)^4 - (x_1-x_2)^2+(x_1-x_3)^2
\ee
The results for this example are summarized below, and the proof is postponed to Appendix \ref{app_3}.
\begin{proposition} \label{p_alm_1}
Regarding the example \eqref{alm_eg_1}, if $\tau >0$, the following hold.

{\rm (i)} $f$ is proper and globally continuous, but non-convex.

{\rm (ii)} $\partial (\frac{1}{\tau} f)$ is non-monotone, and so is $\cA$. 

{\rm (iii)} $B\circ (\partial (\frac{1}{\tau} f))^{-1} \circ B^\top: \R\mapsto \R: z\mapsto \tau z$, i.e., $B\circ (\partial (\frac{1}{\tau} f))^{-1} \circ B^\top = \tau I$.

{\rm (iv)} $B\circ (\partial (\frac{1}{\tau} f))^{-1} \circ B^\top$ is  maximally monotone.

{\rm (v)} The reduced ALM is $q^{k+1} = \frac{1}{1+\tau} q^k$.

{\rm (vi)}   $q^{k} =(1+\tau)^{-k} \cdot q^0 \rightarrow 0$, as $k\rightarrow +\infty$. This achieves global linear convergence of rate $1/(1+\tau)$.
\end{proposition}

\vskip.2cm
The next example of \eqref{problem_alm} is with  $B=\begin{bmatrix}
1 & 1& 1\end{bmatrix}$ and
\be \label{alm_eg_2}
f:\R^3\mapsto \R: x = (x_1,x_2,x_3) \mapsto
|x_1| +|x_2| +|x_3| - |x_1-x_2|.
\ee
The results are as follows.
\begin{proposition} \label{p_alm_2}
Regarding the example \eqref{alm_eg_2}, if $\tau >0$, the following hold.

{\rm (i)} $f$ is proper and globally continuous, but non-convex.

{\rm (ii)} $\partial (\frac{1}{\tau} f)$ is non-monotone, and so is $\cA$. 

{\rm (iii)} $B\circ (\partial (\frac{1}{\tau} f))^{-1} \circ B^\top$ is given as
\[
B\circ \big( \partial (\frac{1}{\tau} f) \big)^{-1} \circ B^\top: 
\R\mapsto 2^\R:  a \mapsto 
\left\{ \begin{array}{ll}
(-\infty, 0], & \textrm{if\ }  a=-1/\tau; \\
\{ 0\}, & \textrm{if\ }  a\in (-1/\tau, 1/\tau); \\ {}
[0,+\infty), & \textrm{if\ }  a=1/\tau. 
\end{array}\right.
\]

{\rm (iv)} $B\circ (\partial (\frac{1}{\tau} f))^{-1} \circ B^\top$ is  maximally monotone.

{\rm (v)} The reduced ALM is $q^{k+1} = (1+\cN_C)^{-1}  q^k$, where $\cN_C$ is a normal cone (at some point) of the set $C = [-1/\tau, 1/\tau]$.

{\rm (vi)}  $\{q^k\}_{k\in\N} $ follows essentially an iterative projection process onto the set $C$. This is  a firmly nonexpansive projection mapping: 
\[
q^{k+1} = \cP_{C} (q^k) = \left\{ \begin{array}{ll}
-1, & \textrm{if\ } q^k< -1/\tau; \\
q^k, & \textrm{if\ }  q^k\in [-1/\tau, 1/\tau]; \\ {}
1, & \textrm{if\ }  q^k > 1/\tau. 
\end{array}\right.
\]
\end{proposition}

The proof can be found in Appendix \ref{app_4}.
Last, it should be emphasized that the well-definedness and convergence of the dual variable $\{q^k\}_{k\in\N}$ in all above examples implies  the convergence of $\{Bu^k\}_{k\in\N}$ (to 0), though $\{ u^k\}_{k\in\N}$ may not be uniquely determined by the primal step of ALM \eqref{alm_1}. In addition, if $q^k\rightarrow 0$ as $k\rightarrow +\infty$, it can be concluded that the constraint of $Bu=0$ does not affect the minimization of $f$, i.e., the optimal value of the problem \eqref{problem_alm} exactly equals to $\min f$ (without any constraints).

\subsection{Douglas--Rachford splitting (DRS)}
The whole convergence of the DRS iterations \eqref{drs} in terms of  $\{(u^k,w^k,z^k)\}_{k\in\N}$ has been shown by an {\it ad hoc} analysis of \cite{svaiter} and the recent systematic study of \cite{bredies_ppa}. We now generalize the DRS to the non-convex case of $f+g$. Note again that in this case, the sequence $(u^k,w^k,z^k)$ generated by \eqref{T} with the choices of $\cA$ and $\cQ$ given as in \eqref{drs_ppa} fulfills the condition of \eqref{p1}, but does not necessarily satisfy the optimality of the $u$ and $w$-steps of  \eqref{drs}. 

\begin{corollary} \label{c_drs}
Given the DRS iterations \eqref{drs} and the corresponding $\cA$ and $\cQ$ as \eqref{drs_ppa},  the following hold.

{\rm (i)} The reduced form of \eqref{drs} is the iteration of $z^k\in\cU$ only: 
$z^{k+1} = \big( \cI+ (\cC \circ \cA^{-1} \circ \cC^\top)^{-1} \big)^{-1}
z^k$,  where $\cC = \begin{bmatrix}
 0 & 0 & I \end{bmatrix}$.
 
 {\rm (ii)} $\big( \cC \circ \cA^{-1} \circ \cC^\top \big)^{-1}
=  B\circ \cK^{-1}    B^\top$,  where 
$\cK =  \begin{bmatrix}
\tau \partial g & I \\  -I & \tau \partial f \end{bmatrix}$  and 
$B = \begin{bmatrix} I & -I  \end{bmatrix}$.

{\rm (iii)} The reduced DRS can be rewritten as
\be \label{alm_reduced2}
z^{k+1} = \big( \cI+ B\circ \cK^{-1} \circ B^\top \big)^{-1}
z^k.
\ee

 {\rm (iv)} \eqref{alm_reduced2} is well-defined, if $B\circ \cK^{-1} \circ B^\top: \cU \mapsto 2^\cU$ is maximally monotone.
 \end{corollary}
\begin{proof}
(i) Proposition \ref{p_reduced}-(ii).

(ii) $\cA$ can be rewritten as $\cA =  \begin{bmatrix}
\cK & -B^\top \\ B & 0 \end{bmatrix}$, and $\cQ$ can be decomposed as $\cQ = \cC^\top \cC$. Consider $b\in (\cC \circ \cA^{-1} \circ \cC^\top)^{-1} (a) \Longleftrightarrow a\in  (\cC \circ \cA^{-1} \circ \cC^\top) (b) = \cC \cA^{-1} (0,0,b)$.
Letting $ \begin{bmatrix}
c \\ d \end{bmatrix} \in   \begin{bmatrix}
\cK & -B^\top \\ B & 0 \end{bmatrix}^{-1} \begin{bmatrix}
0 \\  b \end{bmatrix}$, then we have $a= d$ and
\[
\begin{bmatrix}
0 \\  b \end{bmatrix}
\in    \begin{bmatrix}
 \cK  &  -B^\top  \\ B &  0
    \end{bmatrix}   \begin{bmatrix}
c \\ a \end{bmatrix}  =\begin{bmatrix}
\cK c -B^\top a \\ Bc \end{bmatrix}.
\]
This yields that $b= Bc$, $ B^\top a  \in \cK c$,  and then 
$b \in \big(  B\circ \cK^{-1}  \circ   B^\top \big)  ( a) $, which completes the proof.

(iii) Combine (i) and (ii).

(iv) Theorem \ref{t_full}  or Proposition \ref{p_T_another}-(v).\end{proof}

\vskip.2cm
Now, we consider to minimize $\min_{u\in\R}  \frac{1}{2}u^2 -|u|$, which is naturally split as the convex and non-convex part: $f(u)=\frac{1}{2}u^2$ and $g(u)= -|u|$.  The following proposition shows that the reduced DRS converges under restricted maximal monotonicity, even if $\cT$ is not well-defined.
\begin{proposition} \label{p_drs}
Regarding the DRS scheme \eqref{drs} and \eqref{drs_ppa} with $f(u)=\frac{1}{2}u^2$ and $g(u)= -|u|$, let $\tau = 1$, the following hold.

{\rm (i)} $f+g$ is proper and globally continuous, but non-convex.

{\rm (ii)} $\cK$ defined as Corollary \ref{c_drs}-(ii) is non-monotone, and so is $\cA$. 

{\rm (iii)} $B\circ \cK^{-1} \circ B^\top: \R\mapsto \R: z\mapsto z$, i.e., $B\circ \cK^{-1} \circ B^\top = I$.

{\rm (iv)} $B\circ \cK^{-1} \circ B^\top$ is maximally monotone.

{\rm (v)} The reduced DRS becomes $z^{k+1} = \frac{1}{2} z^k$.

{\rm (vi)}   $z^{k} = 2^{-k} \cdot z^0 \rightarrow 0$, as $k\rightarrow +\infty$. This achieves global linear convergence of rate $1/2$.
\end{proposition}

The proof is postponed to Appendix \ref{app_5}. Furthermore, let us consider the whole form of DRS for this case, which is given as
\be  \label{drs_eg}
\left\lfloor \begin{array}{lll}
0 & = &  \partial (-|u|)(u^{k+1}) + u^{k+1} - z^k, \\
0 & = & 2 w^{k+1} - 2u^{k+1}+   z^{k} , \\
0 & =  & \bz^{k+1}  -\bz^k - \bw^{k+1} + u^{k+1}.
 \end{array} \right.
\ee 
If we use limiting subdifferential, the $u$-step of \eqref{drs_eg} becomes
\[
u^{k+1} =  \left\{ \begin{array}{ll}
z^k - 1, & \textrm{if\ }  z^k < -1; \\
\{z^k-1, z^k+1\}, & \textrm{if\ }  z^k\in [-1, 1]; \\ {}
z^k+1, & \textrm{if\ }  z^k >1.
\end{array}\right.
\]


This suggests that $u^{k+1}$ is not uniquely determined by $z^k$, if $z^k\in [-1,1]$. However, it is interesting to observe the shadow sequence of $\{z^k\}_{k\in\N}$ still converges, though this DRS and the corresponding $\cT$ is not well-defined.

In numerical test, we initialize $z^0=1$ and randomly choose $u^{k+1}$ as either $z^k-1$ or $z^k+1$, when $z^k \in [-1,1]$ during each iterate. The sequences of  $\{(u^k,w^k,z^k)\}_{k\in\N}$ are shown in Fig. \ref{fig}-(1) and (2). It is easy to see that $z^k \rightarrow 0$ very rapidly, and both $u^k$ and $w^k$ jump between $+1$ and $-1$ after a few steps. Fortunately, both of $+1$ and $-1$ are global minimizers of $f+g$. Therefore, the objective value of $f+g$ keeps decreasing to its minimum of -1/2, as shown in Fig. \ref{fig}-(3).

\begin{figure} [H] 
\centering
\hspace*{-.2cm}
\scalebox{0.7} {
\begin{tikzpicture}[scale=0.9]

\draw node[below] at(0, 8.4) { \large  (1) evolution of $\{(u^k,w^k)\}_{k\in\N}$ };
\draw node[below] at (7, 8.4) { \large  (2) convergence of  
$\{z^k\}_{k\in\N}$ };
\draw node[below] at (14, 8.4) { \large (3) objective value $(f+g) (u^k)$};

\node at (0, 5) {\includegraphics[width=6cm]{./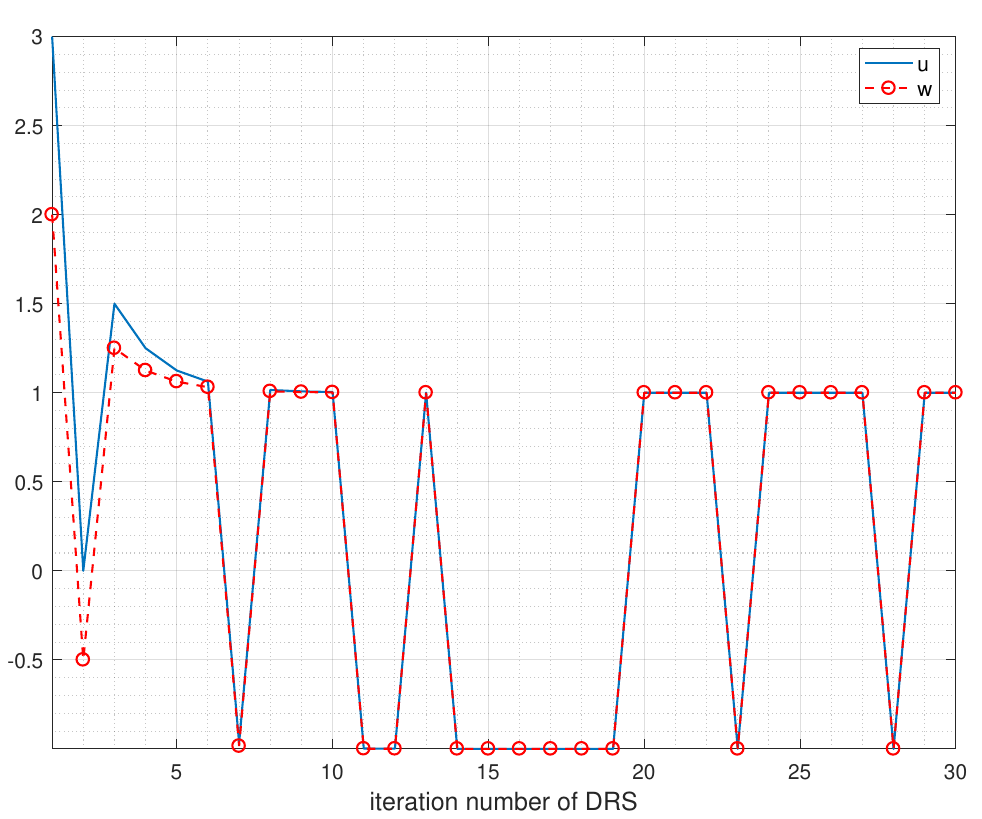}};
\node at (7, 5) {\includegraphics[width=6cm]{./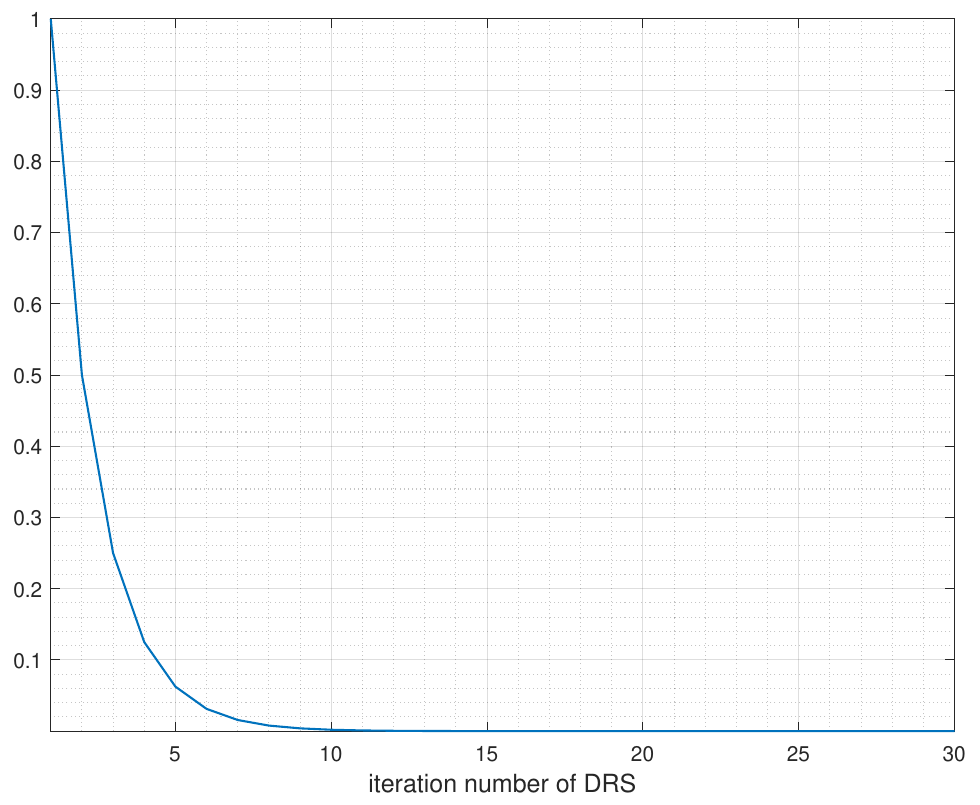}};
\node at (14, 5) {\includegraphics[width=6cm]{./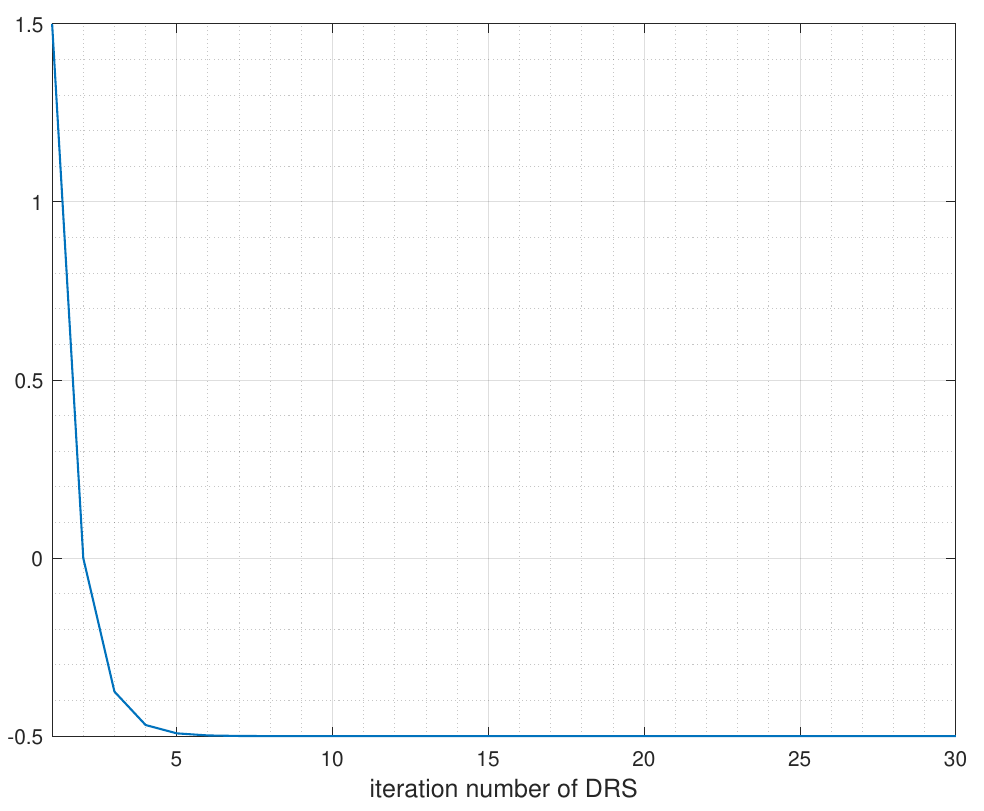}};

\end{tikzpicture} }
\vskip-.3cm
\caption{\small Evolutions during the DRS iterations. }
\label{fig}
\vskip-.08cm
\end{figure}

\section{Concluding remarks}
This work provided a fundamental understanding of degenerate preconditioner and its associated convergence issue. The applications of ALM and DRS have been extended to non-convex problems under restricted maximal monotonicity.  Our  results could be further exemplified with more scenarios, for example:
\begin{itemize}
\item Analyze the existence and uniqueness of solution to the general regularized least-squares problem: 
$\min_{x\in\R^n} f(x)+\frac{1}{2}\|Ax-b\|^2$,
particularly  generalized lasso (corresponding to $f=\|D\cdot\|_1$), and make close connections to \cite{fadili_cone}.

\item Proceed to study the iterative behaviours of ALM and DRS for no-convex problems satisfying restricted maximal monotonicity.
\end{itemize}

Some theoretical aspects are also of great interest, for instance,
\begin{itemize}
\item Explore the properties of $\cT$, if the range of $\cQ$ is not closed, as indicated in \cite{hhb_on_bredies}.

\item Study the behaviours of \eqref{T} if the inclusion problem \eqref{p1} is inconsistent, as proposed in \cite{hhb_drs}.

\item  Finding a simple interpretation of Assumption \ref{assume_A}-(ii) could also be interesting for practitioners. 
\end{itemize}

\section{Acknowledgements}
This work was supported by the National Natural Science Foundation of
 China under grant  12471300 and Independent Innovation Science Fundation of National University of Defense Technology (25-ZZCX-JDZ-14). We are gratefully indebted to  Mr. Brecht Evens (KU Leuven, Belgium) for inspiring comments and  bringing \cite[Theorem 3.4]{latafat_2017} to our attention in the early stages of this work.  We would also like to  thank Prof. Jianchao Bai (NWPU, China) and Mr. Shuchang Zhang (NUDT, China) for detailed discussions on DRS algorithm and \cite[Theorem 2.1]{fxue_drs}, and Dr. Emanuele Naldi (Universit\`{a} di Genova, Italy) for sharing his Ph.D. thesis \cite{naldi_thesis} and stimulating discussions.

\appendix
\section{An alternative proof of Theorem \ref{t_con_ran}}
\label{app_1}
The proof is presented below.
\begin{proof}
Substituting $x_\rsf = x_\rsf^k$ and $y_\rsf = x_\rsf^{k+1}$ in \eqref{rr3}, we have:
\be \label{abc}
\sqrt{\cQ} x_\rsf^k - \sqrt{\cQ} x_\rsf^{k+1} =
\Big( \cI+\big(\sqrt{\cQ}\cA^{-1}\sqrt{\cQ} \big) \big|_{\ransf\cQ} \Big)^{-1} \sqrt{\cQ} x_\rsf^k , 
\ee
where the single-valuedness of the right-hand side has been shown in Proposition \ref{p_equality}-(i).
Let  $q_\rsf^k := \sqrt{\cQ} x_\rsf^k$, $\cB := \big(\sqrt{\cQ}\cA^{-1}\sqrt{\cQ} \big) \big|_{\ransf\cQ}$, then, \eqref{abc} is rewritten as (by maximal monotonicity of $\cB:\ransf\cQ \mapsto 2^{\ransf\cQ}$ and celebrated Moreau's identity):
\[
 q_\rsf^{k+1} = q_\rsf^k - ( \cI+\cB )^{-1}  q_\rsf^k
= \big( \cI+\cB^{-1} \big)^{-1}  q_\rsf^k
\Longrightarrow
\cB^{-1} q_\rsf^{k+1} \owns q_\rsf^k-q_\rsf^{k+1}.
\]
It is known from the proof of Theorem \ref{t_con_ran} that $q_\rsf^k-q_\rsf^{k+1} \rightarrow 0$ and $ q_\rsf^{k} \weak q^\star$, as $k\rightarrow\infty$. Since $\cB^{-1}$ is also maximally monotone (by \cite[Proposition 20.22]{plc_book}), then $\grasf \cB^{-1}$ is sequentially closed in $(\ransf\cQ)^\text{weak}\times (\ransf\cQ)^\text{strong}$,  i.e., $(\ransf\cQ, \|\cdot\|_\cQ) \times (\ransf\cQ, \langle \cdot |\cdot \rangle_\cQ)$, by \cite[Proposition 20.38]{plc_book}. Thus, we obtain 
\[
0\in \cB^{-1} q^\star \Longrightarrow q^\star \in \cB (0)
=\sqrt{\cQ}\cA^{-1} (0) \Longrightarrow x_\rsf^\star \in \cP_{\ransf\cQ} \cA^{-1} (0) =\cP_{\ransf\cQ} (\zersf \cA).
\]
\end{proof}

\section{Proof of Proposition \ref{p_lip}}
\label{app_2}
Before proving this, we present several simple facts.
\begin{fact} \label{f_lip}
Under Assumptions \ref{assume_Q} and \ref{assume_local}, given $\cT$ in \eqref{T}, then $\forall x\in\ransf\cQ$, $\exists \lambda_{\inf}, \lambda_{\sup} >0$, such that 

{\rm (i)}  $\lambda_{\inf} \|x\|^2 \le \|x\|_\cQ^2 
\le \lambda_{\sup} \|x\|^2$;

{\rm (ii)} $\lambda_{\inf} \|x\| \le \|\cQ x\|  \le \lambda_{\sup} \|x\|$.
\end{fact}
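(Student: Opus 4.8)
The plan is to exhibit one pair of constants $\lambda_{\inf},\lambda_{\sup}>0$ that serves all three items simultaneously; the natural choice is $\lambda_{\sup}:=\|\cQ\|$ (the operator norm) and $\lambda_{\inf}:=\alpha$, where $\alpha>0$ is the constant supplied by the closed-range criterion recalled in the Remark following Assumption~\ref{assume_Q} (equivalently \cite[Fact 2.26]{plc_book}), namely $\|x\|_\cQ^2\ge\alpha\|x\|^2$ for all $x\in\ransf\cQ$. I would open the proof by fixing these two constants once and for all.

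For (i) I would treat the two bounds separately. The upper bound is global: $\|x\|_\cQ^2=\langle\cQ x|x\rangle\le\|\cQ\|\,\|x\|^2$ for every $x\in\cH$, in particular on $\ransf\cQ$. The lower bound $\alpha\|x\|^2\le\|x\|_\cQ^2$ on $\ransf\cQ$ is precisely the closed-range criterion just quoted, and this is the one place where Assumption~\ref{assume_Q}-(iii) is genuinely used; should one prefer to make that dependence explicit, one can instead note $\overline{\ransf\cQ}=(\kersf\cQ)^\perp$, so under closedness $\cQ|_{\ransf\cQ}$ is an injective bounded operator \emph{onto} the Hilbert space $\ransf\cQ$, hence boundedly invertible, and combining this with $\|\cQ x\|^2\le\|\cQ\|\,\|x\|_\cQ^2$ recovers the bound with $\lambda_{\inf}$ expressed in terms of $\|\cQ\|$ and the lower-bound constant of $\cQ|_{\ransf\cQ}$. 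For (ii), again $\|\cQ x\|\le\|\cQ\|\,\|x\|=\lambda_{\sup}\|x\|$ holds on all of $\cH$, while the lower bound follows by feeding (i) into Cauchy--Schwarz: for $x\in\ransf\cQ$, $\|\cQ x\|\,\|x\|\ge\langle\cQ x|x\rangle=\|x\|_\cQ^2\ge\lambda_{\inf}\|x\|^2$, whence $\|\cQ x\|\ge\lambda_{\inf}\|x\|$ (the case $x=0$ being trivial).

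For (iii) I would first record the two facts that make the statement meaningful even without well-definedness of $\cT$: $\cP_{\ransf\cQ}\circ\cT$ is single-valued (Lemma~\ref{l_single}) and $\cT|_{\ransf\cQ}=\cT$ (Fact~\ref{f_T}-(ii)), so that $\cP_{\ransf\cQ}\cT x_i=(\cP_{\ransf\cQ}\circ\cT)(x_{i,\rsf})$. Then the restricted form~\eqref{fne_r} of $\cQ$-firm nonexpansiveness (Fact~\ref{f_fne}) gives $\|\cP_{\ransf\cQ}\cT x_1-\cP_{\ransf\cQ}\cT x_2\|_\cQ\le\|x_{1,\rsf}-x_{2,\rsf}\|_\cQ$. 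Since both sides involve vectors of $\ransf\cQ$, item (i) applies at each end — $\|\cdot\|\le\lambda_{\inf}^{-1/2}\|\cdot\|_\cQ$ on the left and $\|\cdot\|_\cQ\le\lambda_{\sup}^{1/2}\|\cdot\|$ on the right — and chaining the three inequalities produces the constant $\sqrt{\lambda_{\sup}/\lambda_{\inf}}$ as claimed.

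I do not expect any real obstacle here: the whole statement is an exercise in norm equivalence on the closed subspace $\ransf\cQ$. The only points requiring care are bookkeeping ones — checking at each step that the vectors in question actually lie in $\ransf\cQ$ before invoking (i), and relying on the already-established single-valuedness of $\cP_{\ransf\cQ}\circ\cT$ so that the left-hand side of (iii) is unambiguous even when $\cT$ is multi-valued.
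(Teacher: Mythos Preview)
Your proposal is correct and follows essentially the same route as the paper's own proof: boundedness and closed range of $\cQ$ for (i)--(ii), and the $\cQ$-firm nonexpansiveness (Fact~\ref{f_fne}) together with (i) for (iii). The paper's proof is extremely terse (one line each), and you have simply spelled out the norm-equivalence details and the bookkeeping on $\ransf\cQ$ that the paper leaves implicit.
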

\begin{proof}
Due to the boundedness and closedness of $\cQ$ as Assumption \ref{assume_Q}.
\end{proof}

Then, we prove Proposition \ref{p_lip}.
\begin{proof}
The proof is by the order of (i)$\Longrightarrow$(ii)$\Longrightarrow$(iii)$\Longrightarrow$(i).

(i)$\Longrightarrow$(ii): We have, $\forall x_1,x_2\in\ransf\cQ$, that
\begin{eqnarray}
&& \big\| (\cP_{\kersf\cQ} \circ \cT|_{\ransf\cQ}) x_1
- (\cP_{\kersf\cQ} \circ \cT|_{\ransf\cQ}) x_2  \big\|
\nonumber \\
&=& \big\| \cP_{\kersf\cQ}  (\cA+\cQ)^{-1} \cQ  x_1
- \cP_{\kersf\cQ} (\cA+\cQ)^{-1} \cQ  x_2  \big\|
\nonumber \\
&\le & \big\|  (\cA+\cQ)^{-1} \cQ  x_1
-  (\cA+\cQ)^{-1} \cQ  x_2  \big\|
\quad \textrm{by $\|\cP_{\kersf\cQ} \|\le 1$}
\nonumber \\
&\le & \xi \big\|  \cQ ( x_1-x_2)  \big\|
\quad \textrm{by (i)}
\nonumber \\
&\le & \xi \lambda_{\sup} \big\|   x_1-x_2  \big\|
\quad \textrm{by Fact \ref{f_lip}-(ii)}
\nonumber
\end{eqnarray}

(ii)$\Longrightarrow$(iii): We have, $\forall x_1,x_2\in\cH$, that
\begin{eqnarray}
&& \big\| \cT x_1 - \cT x_2  \big\|
\nonumber \\
&\le & \big\| \cP_{\ransf\cQ} \cT  x_{1,\rsf} -
 \cP_{\ransf\cQ} \cT  x_{2,\rsf}  \big\|
+ \big\| \cP_{\kersf\cQ} \cT  x_{1,\rsf} -
 \cP_{\kersf\cQ} \cT  x_{2,\rsf}  \big\|
\nonumber \\
&\le & \big\| x_{1,\rsf} - x_{2,\rsf}  \big\|
 + \big\| (\cP_{\kersf\cQ} \circ \cT|_{\ransf\cQ})  x_{1,\rsf} -
(\cP_{\kersf\cQ} \circ \cT|_{\ransf\cQ})  x_{2,\rsf}  \big\|
\quad \textrm{by Fact \ref{f_fne}}
\nonumber \\
&\le & (1+\eta) \big\| x_{1,\rsf} - x_{2,\rsf}  \big\|
\quad \textrm{by (ii)}
\nonumber \\
\nonumber \\
&\le & \frac{1+\eta}{\sqrt{\lambda_{\inf}}}
 \big\| x_{1} - x_{2}  \big\|_\cQ
\quad \textrm{by Fact \ref{f_lip}-(i)}
\nonumber
\end{eqnarray}

(iii)$\Longrightarrow$(i): We have, $\forall x_1,x_2\in\ransf\cQ$, that
\begin{eqnarray}
&& \big\| ((\cA+\cQ)^{-1}|_{\ransf\cQ}) x_1 - 
( (\cA+\cQ)^{-1}|_{\ransf\cQ}) x_2  \big\|
\nonumber \\
&\le &\big\| (\cA+\cQ)^{-1} \cQ y_1 -  
 (\cA+\cQ)^{-1} \cQ y_2  \big\|
\quad \textrm{since $\exists y_1,y_2$, s.t. $x_1=\cQ y_1$,
 $x_2=\cQ y_2$}
\nonumber \\
&\le & \xi \big\| y_{1} - y_{2}  \big\|_\cQ
 \quad \textrm{by (iii)}
\nonumber \\
&\le &  \frac{\xi \sqrt{\lambda_{\sup}}} 
{\lambda_{\inf} } 
 \big\| x_{1} - x_{2}  \big\|
\quad \textrm{by $x_1=\cQ y_1$, $x_2=\cQ y_2$ and
Fact \ref{f_lip}}
\nonumber
\end{eqnarray}
\end{proof}

\section{Proof of Proposition \ref{p_alm_1}}
\label{app_3}
The proof is presented below.
\begin{proof}
(i) Clear.

(ii) The limiting subdifferential of $\frac{1}{\tau} f$ is computed as
\[
\big(\partial (\frac{1}{\tau} f) \big) (x)
= \big(\nabla (\frac{1}{\tau} f) \big) (x) 
= \frac{1}{\tau} \begin{bmatrix}
(x_1+x_2+x_3) + 4(x_1-x_2)^3 - 2(x_1-x_2)+2(x_1-x_3) \\
(x_1+x_2+x_3) - 4(x_1-x_2)^3 + 2(x_1-x_2) \\
(x_1+x_2+x_3) -2(x_1-x_3) \\
\end{bmatrix}.
\]
Taking $x=(0.5,0,0)$ and $y=(0,0.5,0)$, we have $u=\nabla f(x)=(1,1,-0.5)$,
$v=\nabla f(y)=(1,0,0.5)$, and thus, $\langle x-y | u-v\rangle = -0.5<0$.

(iii) To see this, let us now first compute  $B\circ  (\partial f)^{-1} \circ B^\top$, and then take the factor of $\frac{1}{\tau}$ back into account.
For the input $a\in\R$ of the operator $B\circ (\partial f)^{-1} \circ B^\top$, let $b=(b_1,b_2,b_3)\in (\partial f)^{-1} ( B^\top a) =  (\partial f)^{-1} ( a,a,a)$, then the output $z \in (B\circ (\partial f)^{-1} \circ B^\top) (a)$ becomes $z=b_1+b_2+b_3$. We now evaluate $(b_1,b_2,b_3)\in  (\partial f)^{-1} ( a,a,a)$, which is equivalent to $\nabla f (b_1,b_2,b_3) = (a,a,a)$, i.e., 
\[
\left\{ \begin{array}{lll}
a & = & (b_1+b_2+b_3) + 4(b_1-b_2)^3 - 2(b_1-b_2)+2(b_1-b_3), \\
a & = &(b_1+b_2+b_3) - 4(b_1-b_2)^3 + 2(b_1-b_2) , \\
a & = & (b_1+b_2+b_3) -2(b_1-b_3). 
\end{array}\right.
\]
Adding the first two gives $a=  (b_1+b_2+b_3) +(b_1-b_3)$, which, combining with the last one, further yields $b_1=b_3$. This in turn implies that $ b_1+b_2+b_3 = a$. Finally, we have $B\circ (\partial f)^{-1} \circ B^\top: a \mapsto z =b_1+b_2+b_3= a$, which is identity. Taking the factor of $\frac{1}{\tau}$ into account completes the proof.

(iv) \cite[Corollary 20.28]{plc_book}.

(v)  Corollary \ref{c_alm}-(iii).

(vi) is clear from (v).
\end{proof}

\section{Proof of Proposition \ref{p_alm_2}}
\label{app_4}
The proof is presented below.
\begin{proof}
(i) Clear.

(ii) Taking $x=(1,0,0)$ and $y=(0,1,0)$, we have that
 $ \partial f(x)= \{0\}\times [0,2] \times [-1,1]$,
and  $ \partial f(y)= [0,2] \times  \{0\}  \times [-1,1]$. Choosing
$u =\frac{1}{\tau} (0,1,0) \in \partial  (\frac{1}{\tau} f) (x)$ and $v= \frac{1}{\tau} (1,0,0)\in\partial (\frac{1}{\tau} f) (y)$ yields that $\langle x-y | u-v\rangle = -2/\tau < 0$.

(iii) Still, let us first compute  $B\circ  (\partial f)^{-1} \circ B^\top$, and then take the factor of $\frac{1}{\tau}$ back into account.
For the input $a\in\R$ of the operator $B\circ (\partial f)^{-1} \circ B^\top$, let $b=(b_1,b_2,b_3)\in (\partial f)^{-1} ( B^\top a) =  (\partial f)^{-1} ( a,a,a)$, then the output $z \in (B\circ (\partial f)^{-1} \circ B^\top) (a)$ becomes $z=b_1+b_2+b_3$. We now evaluate $(b_1,b_2,b_3)\in  (\partial f)^{-1} ( a,a,a)$, which is equivalent to $(\partial f) (b_1,b_2,b_3) \owns (a,a,a)$. This inclusion implies that there exist subgradients  $v_1\in\partial |b_1|$, $v_2 \in\partial |b_2|$, $v_3\in\partial |b_3|$, and $w\in\partial |b_1-b_2|$, such that $v_1-w = a$, $v_2+w = a$ and $v_3 = a$. 
Observing the third component, we have that $a\in [-1, 1]$. Let us discuss it case-by-case.

Case-I: If $a=-1$, then $b_3 \le 0$. Suppose $b_1> b_2$, then $w=1$. This means that $v_1 -1 =a =-1$ and $v_2 +1 =a =-1$, i.e., $v_1 = 0$ and $v_2 = -2$. The latter is impossible. On the contrary,  suppose $b_1< b_2$, then $w=-1$. This means that $v_1 +1 =a =-1$ and $v_2-1 =a =-1$, i.e., $v_1 = -2$ and $v_2 = 0$. The former is impossible. The only possibility is that $b_1=b_2$. Further considering the sign of $b_1$ and $b_2$: If $b_1 =b_2 >0$, then $v_1=v_2=1$, and $w\in [-1,1]$. By $a=v_1-w=v_2+w$, we have that $a\in [0,2]$, which contradicts to $a=-1$.  If $b_1 =b_2 <0$, then $v_1=v_2=-1$, and $w\in [-1,1]$. By $a=v_1-w=v_2+w$, we have that $a\in [-2,0]$, which allows $a=-1$.  If $b_1 =b_2 =0$, then $v_1=v_2\in [-1,1]$, and $w\in [-1,1]$. By $a=v_1-w=v_2+w$, we have that $a\in [-2,2]$, which allows $a=-1$. Thus, we have $b_1=b_2\le 0$. Finally, $a=-1$ implies that $b_1=b_2 \le 0$ and $b_3\le 0$.

Case-II: If $a=1$, then $b_3 \ge 0$. Suppose $b_1> b_2$, then $w=1$. This means that $v_1 -1 =a =1$ and $v_2 +1 =a =1$, i.e., $v_1 = 2$ and $v_2 = 0$. The former is impossible. On the contrary,  suppose $b_1< b_2$, then $w=-1$. This means that $v_1 +1 =a =1$ and $v_2-1 =a =1$, i.e., $v_1 = 0$ and $v_2 = 2$. The latter is impossible. The only possibility is that $b_1=b_2$. Further considering the sign of $b_1$ and $b_2$: If $b_1 =b_2 >0$, then $v_1=v_2=1$, and $w\in [-1,1]$. By $a=v_1-w=v_2+w$, we have that $a\in [0,2]$, which allows $a=-1$.  If $b_1 =b_2 <0$, then $v_1=v_2=-1$, and $w\in [-1,1]$. By $a=v_1-w=v_2+w$, we have that $a\in [-2,0]$, which contradicts to $a=1$.  If $b_1 =b_2 =0$, then $v_1=v_2\in [-1,1]$, and $w\in [-1,1]$. By $a=v_1-w=v_2+w$, we have that $a\in [-2,2]$, which allows $a=-1$. Thus, we have $b_1=b_2\ge 0$. Finally, $a=1$ implies that $b_1=b_2 \ge 0$ and $b_3\ge 0$.

Case-III: If $a \in(-1,1)$, then $b_3 = 0$. Suppose $b_1> b_2$, then $w=1$. This means that $v_1 -1 = v_2+1 = a  \in (-1,1)$, i.e., $v_1 = v_2 +2$, which is impossible. On the contrary, suppose $b_1< b_2$, then $w=-1$. This means that $v_1 +1 = v_2-1 = a  \in (-1,1)$, i.e., $v_1 = v_2 -2$, which is also impossible. Finally, we have $b_1=b_2$. Regarding the signs of $b_1$ and $b_2$, if $b_1 =b_2 >0$, then  $v_1=v_2=1$, and $w\in [-1,1]$. By $a=v_1-w=v_2+w$, we have  $a=( v_1+v_2)/2=1$, which contradicts to $a\in (-1,1)$.  If $b_1 =b_2 <0$, then $v_1=v_2=-1$, and $w\in [-1,1]$. By $a=v_1-w=v_2+w$, we have $v_1+v_2 = -2 = 2a$, this implies that $a=-1$, also contradicting to $a\in (-1,1)$. Finally, the only possibility is $b_1=b_2=0$.
 
As summary of the above 3 cases, we have
\[
(\partial f)^{-1}: (a,a,a) \mapsto 
\left\{ \begin{array}{ll}
\{(b_1,b_2,b_3) : b_1=b_2 \le 0,\ b_3\le 0\}, & \textrm{if\ }  a=-1; \\
\{(b_1,b_2,b_3) : b_1=b_2=  b_3 = 0\}, & \textrm{if\ }  a\in (-1, 1); \\
\{(b_1,b_2,b_3) : b_1=b_2 \ge 0,\  b_3\ge 0\}, & \textrm{if\ }  a=1,
\end{array}\right.
\]
from which then follows 
\[
B\circ (\partial f)^{-1} \circ B^\top: \R\mapsto 2^\R:
 a \mapsto 
\left\{ \begin{array}{ll}
(-\infty, 0], & \textrm{if\ }  a=-1; \\
\{ 0\}, & \textrm{if\ }  a\in (-1, 1); \\ {}
[0,+\infty), & \textrm{if\ }  a=1. 
\end{array}\right.
\]
Taking the factor of $1/\tau$ back into account completes the proof.

(iv) Clear.

(v) Note that $B\circ (\partial (\frac{1}{\tau} f))^{-1} \circ B^\top$ is actually the subdifferential of an indicator function of the set $C = [-1/\tau,1/\tau]$
 defined on $\R$, i.e., $B\circ (\partial (\frac{1}{\tau} f))^{-1} \circ B^\top = \partial \iota_C = \cN_C$, which is a normal cone at some given point of $C$.
 
(vi) Rewrite (v) as $q^{k+1} + \cN_{C} (q^{k+1}) \owns q^k$, where 
\[
\cN_{C} (q) = \left\{ \begin{array}{ll}
[0, +\infty), & \textrm{if\ }  p =1/\tau; \\
0, & \textrm{if\ }  p\in (-1/\tau, 1/\tau); \\ {}
(-\infty, 0], & \textrm{if\ }  p =-1/\tau,
\end{array}\right.
\]
from which follows (vi). 
\end{proof}

\section{Proof of Proposition \ref{p_drs}}
\label{app_5}
The proof is presented below.
\begin{proof}
(i) Clear.

(ii) Taking $x=(1,0)$ and $y=(-1,0)$, we have $\partial g(x)= \{-1\}$, and $\partial g(y)= \{1\}$, and thus, $\cK x = (-1,-1)$, and $\cK y = (1,1)$, and  $\langle x-y | \cK x - \cK y\rangle = -4<0$.

(iii) For the input $a\in\R$ of the operator $B\circ \cK^{-1} \circ B^\top$, let $b=(b_1,b_2)\in \cK^{-1} ( B^\top a) =  \cK^{-1} ( a,-a)$, then the output $z \in (B\circ \cK^{-1} \circ B^\top) (a)$ becomes $z=b_1-b_2$. We now evaluate $(b_1,b_2)\in  \cK^{-1} ( a,-a)$, which is equivalent to $\cK  (b_1,b_2) = (a,-a)$. This implies the following system $\partial g(b_1) + b_2 \owns a$ and $-b_1 +\nabla f(b_2) = -a$. The second becomes $b_1-b_2=a$, which is exact the output $z$, i.e.,  $B\circ \cK^{-1} \circ B^\top: a \mapsto z= a$, which is identity.

(iv) \cite[Corollary 20.28]{plc_book}.

(v)  Corollary \ref{c_drs}-(iii).

(vi) is clear from (v).
\end{proof}

\bibliographystyle{siam}

\small{
\bibliography{refs}
}

\end{document}